  \newenvironment{note}[1][Note]
   {\bigskip\begin{center}\begin{boxedminipage}{4.5in}\setlength{\parindent}{1em}\noindent\textbf{#1. }}
   {\end{boxedminipage}\end{center}\bigskip}
\newcommand{\asym}{\operatorname{asym}}
\newcommand{\rad}{\mathrm{radical}}
\newcommand{\prim}{\mathrm{prime}}
\newcommand{\ci}{\mathrm{c.i.}}
\newcommand{\pmd}{\operatorname{pmd}}
\newcommand{\ww}{{\mathfrak{w}}}
\newcommand{\ee}{{\mathfrak{e}}}
\newcommand{\height}{{\operatorname{height}}}
\newcommand{\Sym}{{\operatorname{Sym}}}
\newcommand{\GL}{\operatorname{GL}}
\newcommand{\chara}{\operatorname{char}}
\newcommand{\Ker}{\operatorname{Ker}}
\newcommand{\rank}{\operatorname{rank}}
\newcommand{\generic}{\ensuremath{\mathrm{gen}}}
\newcommand{\symmetric}{\ensuremath{\mathrm{sym}}}
\newcommand{\skewsymmetric}{\ensuremath{\mathrm{skew}}}
\newcommand{\Tor}{\operatorname{Tor}}
\newcommand{\ini}{\operatorname{in}}
\newcommand{\Pf}{\operatorname{Pf}}
\newcommand{\Sp}{\operatorname{Sp}}
\newcommand{\posMa}{\operatorname{pm}}
\newcommand{\NN}{{\mathbb N}}
\newcommand{\QQ}{{\mathbb Q}}
\newcommand{\RR}{{\mathbb R}}
\newcommand{\ZZ}{{\mathbb Z}}
\newcommand{\KK}{{\mathbb{k}}}
\newcommand{\OO}{{\mathcal O}}
\newcommand{\pP}{{\mathcal P}}
\newcommand{\vV}{{\mathcal V}}
\newcommand{\group}{{\mathfrak{G}}}
  \CheckCommand*\refstepcounter[1]{\stepcounter{#1}%
      \protected@edef\@currentlabel
       {\csname p@#1\endcsname\csname the#1\endcsname}%
  }
  \renewcommand*\refstepcounter[1]{\stepcounter{#1}%
    \protected@edef\@currentlabel
      {\csname p@#1\expandafter\endcsname\csname the#1\endcsname}%
  }
  \def\labelformat#1{\expandafter\def\csname p@#1\endcsname##1}
  \DeclareRobustCommand\Ref[1]{\protected@edef\@tempa{\ref{#1}}%
     \expandafter\MakeUppercase\@tempa
  }
  \newcommand{\numberlike}[2]{%
     \expandafter\def\csname c@#1\endcsname{%
         \expandafter\csname c@#2\endcsname}%
  }
  \def\DefaultNumberTheoremWithin{section}
  \theoremstyle{plain}
  \newtheorem{Lemma}{Lemma}
     \numberwithin{Lemma}{\DefaultNumberTheoremWithin}
     \numberwithin{Claim}{\DefaultNumberTheoremWithin}
  \newtheorem{Theorem}{Theorem}
     \numberwithin{Theorem}{\DefaultNumberTheoremWithin}
  \newtheorem{Corollary}{Corollary}
     \numberwithin{Corollary}{\DefaultNumberTheoremWithin}
  \newtheorem{Proposition}{Proposition}
     \numberwithin{Proposition}{\DefaultNumberTheoremWithin}
  \newtheorem{Conjecture}{Conjecture}
     \numberwithin{Conjecture}{\DefaultNumberTheoremWithin}
  \theoremstyle{definition}
  \newtheorem{Definition}{Definition}
     \numberwithin{Definition}{\DefaultNumberTheoremWithin}
  \theoremstyle{definition}
  \newtheorem{Question}{Question}
     \numberwithin{Question}{\DefaultNumberTheoremWithin}
  \theoremstyle{definition}
     \numberwithin{Problem}{\DefaultNumberTheoremWithin}
  \theoremstyle{definition}
  \newtheorem{Remark}{Remark}
     \numberwithin{Remark}{\DefaultNumberTheoremWithin}
  \newtheorem{Example}{Example}
     \numberwithin{Example}{\DefaultNumberTheoremWithin}
     \numberwithin{Case}{Lemma}
     \numberwithin{Step}{Lemma}
  \def\eqref{\ref}
\begin{document}

\title[LSS-ideals and coordinate sections]{Lov\'asz-Saks-Schrijver ideals and   coordinate sections of  determinantal varieties}

\author[A.~Conca]{Aldo~Conca}
\address{Dipartimento di Matematica, 
Universit\`a di Genova, Via Dodecaneso 35, 
I-16146 Genova, Italy}
\email{conca@dima.unige.it}

\author[V.~Welker]{Volkmar Welker}
\address{Philipps-Universit\"at Marburg, 
         Fachbereich Mathematik und Informatik,
         D-35032 Marburg, Germany}
\email{welker@mathematik.uni-marburg.de}

\date{\today}
\thanks{The work on this paper was partly supported by a DAAD Vigoni project and by INdAM} 

\keywords{ }

\subjclass[2010]{Primary  }

 
%
%

\begin{abstract}
  Motivated by questions in algebra and combinatorics we study two ideals
  associated to a simple graph $G$: 
  \begin{itemize}
     \item the Lov\'asz-Saks-Schrijver ideal defining the 
           $d$-dimensional orthogonal representations  of the graph complementary 
           to $G$ and 
     \item the determinantal ideal of the $(d+1)$-minors of a generic symmetric matrix
             with $0$s 
           in positions prescribed by the  graph $G$. 
  \end{itemize}
  In characteristic $0$ these two ideals turn out to be closely related and algebraic 
  properties such as being radical, prime or 
  a complete intersection transfer from the 
  Lov\'asz-Saks-Schrijver ideal   to 
  the determinantal ideal. 
  For Lov\'asz-Saks-Schrijver ideals we link these properties to 
  combinatorial properties of $G$ and show that they always hold for $d$ 
  large enough. 
  For specific classes of graphs, such a forests, we can give a complete
  picture and classify the radical, prime and complete intersection 
  Lov\'asz-Saks-Schrijver ideals. 
\end{abstract}

\maketitle

\section{Introduction} 
\label{sec:intro}

Let $\KK$ be a field, $n \geq 1$ be an integer and 
set $[n] = \{1,\ldots, n\}$. For a simple graph $G=([n], E)$ with vertex set $[n]$ 
and edge set $E$ we study the following two classes of ideals associated 
to $G$. 

\begin{itemize}
  \item {\sf Lov\'asz-Saks-Schrijver ideals:}

    \noindent For an integer $d \geq 1$ we consider the polynomial ring 
    $S=\KK[y_{i,\ell}~|~i \in [n], \ell \in [d]]$. 
    For every edge $e=\{ i,j\} \in {[n] \choose 2}$ we set 
    $$f_e^{(d)}= \sum_{\ell =1}^d y_{i\ell}\,y_{j\ell}.$$ 
    The ideal 
		$$L_G^\KK(d) = (\,f_e^{(d)} ~|~ e\in E\,) \subseteq S$$ 
    is called the Lov\'asz-Saks-Schrijver ideal, LSS-ideal for short,
    of $G$ with respect to $\KK$.
    The ideal $L_G^\KK(d)$ defines the variety of orthogonal representations
		of the graph complementary to $G$. We refer the reader to
		\cite{LSS,L} for background on orthogonal representations and 
                results on the geometry of the variety of orthogonal
		representations which provided intuition for some of our 
		results.

  \item {\sf Coordinate sections of generic (symmetric) determinantal 
		ideals:}

    \noindent Consider the polynomial ring 
    $S=\KK[x_{ij}~|~ 1\leq i\leq j\leq n]$ and let 
    $X$ be the generic $n\times n$ symmetric matrix, that is, 
    the $(i,j)$-th entry of $X$ is $x_{ij}$ if $i\leq j$ and $x_{ji}$ if 
    $i > j$. 
    Let $X_G^\symmetric$ be the matrix obtained from $X$ by replacing the entries in 
		positions $(i,j)$ and $(j,i)$ for $\{i,j\} \in E$ with $0$. 
    For an integer $d$ let 
    $I_d^\KK(X_G^\symmetric) \subseteq S$ be the ideal 
    of $d$-minors of $X_G^\symmetric$.        
    The ideal $I_d^\KK(X_G^\symmetric)$ defines a coordinate hyperplane 
    section of the generic symmetric determinantal variety. Similarly, 
    we consider ideals defining coordinate hyperplane sections of 
    the generic determinantal varieties and the generic skew-symmetric 
    Pfaffian varieties. 
\end{itemize}

We observe in \ref{sec:minorgraph} that the ideal 
$I_{d+1}^\KK(X_G^\symmetric)$ and the ideal $L_G^\KK(d)$ are closely related. 
Indeed, if $\KK$ has characteristic $0$ classical results from 
invariant theory can be employed to show that $I_{d+1}^\KK(X_G^\symmetric)$ is 
radical (resp.~is prime, resp.~has the expected height) provided 
$L_G^\KK(d)$ is radical (resp.~is prime, resp.~is a complete intersection).
We also exhibit similar relations between variants of $L_G^\KK(d)$ 
and ideals defining coordinate sections of determinantal and Pfaffian ideals.

These facts turn the focus on algebraic properties of 
the LSS-ideals $L_G^\KK(d)$. 
In particular, we analyze the questions:  
when is $L_G^\KK(d)$ a radical ideal?  when is it a complete intersection?  
when is it a prime ideal? Other properties of ideals such as defining a
normal ring or a UFD are interesting as well but will not be treated here.  
In \ref{sec:stabilization} we prove the following: 

\begin{Theorem} 
  \label{thm:prime_and_ci} 
  Let $G = ([n],E)$ be a graph. Then:
  \begin{itemize} 
     \item[(1)] If $L_G^\KK(d)$ is prime then $L_G^\KK(d)$ is a 
       complete intersection.
     \item[(2)] If $L_G^\KK(d)$ is a complete intersection then 
       $L_G^\KK(d+1)$ is prime.
  \end{itemize} 
\end{Theorem}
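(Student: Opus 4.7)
For Part (1), since $L_G^\KK(d)$ is generated by $|E|$ polynomials, Krull's height theorem gives $\height L_G^\KK(d) \le |E|$, and my task is to establish the reverse inequality from primality. My plan is to fix an ordering $e_1, \dots, e_{|E|}$ of the edges of $G$, set $I_k = (f_{e_1}^{(d)}, \dots, f_{e_k}^{(d)})$, and verify that at each step $f_{e_k}^{(d)}$ lies outside every minimal prime of $I_{k-1}$ contained in $L_G^\KK(d)$. If so, the height strictly increases with each edge, yielding $\height L_G^\KK(d) \ge |E|$. The main difficulty is controlling the minimal primes of the intermediate ideals $I_{k-1}$: the ``trivial'' primes associated to vertex-cover coordinate subspaces of $G_{k-1}$ can be handled combinatorially, and the primality of the final ideal is used to rule out stagnation of the chain at any step.

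For Part (2), the argument breaks into two steps. \emph{Step A (complete intersection at $d+1$):} Consider the projection $\pi\colon V(L_G^\KK(d+1)) \to \mathbb{A}^n_{\KK}$ sending $(v_1,\dots,v_n)$ to the last coordinates $(y_{1,d+1},\dots,y_{n,d+1})$. The fiber over the origin is $V(L_G^\KK(d)) \times \{0\}$, of dimension $nd-|E|$ by hypothesis. Since the generators of $L_G^\KK(d+1)$ are homogeneous, $V(L_G^\KK(d+1))$ is a cone, so every irreducible component passes through the origin and meets $\pi^{-1}(0)$. By upper semi-continuity of fiber dimension applied component by component, the generic fiber dimension of $\pi$ on each component is at most $nd - |E|$, whence
\[
\dim V(L_G^\KK(d+1)) \le n + (nd - |E|) = n(d+1) - |E|,
\]
and combining with the Krull lower bound yields $\height L_G^\KK(d+1) = |E|$, so $L_G^\KK(d+1)$ is a complete intersection.

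\emph{Step B (primality at $d+1$):} Step A makes $S/L_G^\KK(d+1)$ Cohen--Macaulay and unmixed, so primality reduces to showing that $V(L_G^\KK(d+1))$ is irreducible and generically reduced. I plan to exploit the substitution $w_i = (v_i, a_i) \in \KK^{d+1}$, under which the equation $f_e^{(d)}(v) + a_i a_j = 0$ translates into the LSS relation $\langle w_i, w_j\rangle = 0$, identifying fibers of $\pi$ with hyperplane-type slices of $V(L_G^\KK(d+1))$. A Jacobian computation at a point where the $v_i$ form a maximal-rank orthogonal configuration in $\KK^{d+1}$ should show generic smoothness, hence generic reducedness, and connectedness of the cone should combine with a Bertini-style argument for the generic slice to give irreducibility.

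The hardest part is closing the circle in Step B: the ``generic slice'' invoked by Bertini is itself the generic fiber of $\pi$, so avoiding circularity in the irreducibility argument is delicate. I expect the resolution to exploit either the $O(d+1)$-symmetry of the LSS equations or a reduction via the connection to coordinate sections of determinantal varieties discussed in the introduction.
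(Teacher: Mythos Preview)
Your proposal has real gaps in both parts.

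\textbf{Part (1).} The plan to force the height to jump at each edge-insertion does not go through. First, you restrict attention to minimal primes of $I_{k-1}$ \emph{contained in} $L_G^\KK(d)$; but to get $\height I_k > \height I_{k-1}$ you must exclude $f_{e_k}^{(d)}$ from \emph{every} minimal prime of $I_{k-1}$. Second, ``primality of the final ideal rules out stagnation'' is not an argument: a prime ideal can be minimally generated by more elements than its height (the twisted cubic is prime of height $2$ with three minimal generators), so primality of $L_G^\KK(d)$ gives no direct control over the minimal primes of the intermediate $I_{k-1}$, and the ``vertex-cover'' primes are far from all of them. The paper instead extracts a \emph{combinatorial} consequence of primality---$G$ contains no $K_{a,b}$ with $a+b>d$, hence every vertex has degree $<d$---and runs induction on $n$ (not on edges): $S/L_G^\KK(d)$ is the symmetric algebra, over $R=S'/L_{G-n}^\KK(d)$, of the cokernel of the $u\times d$ matrix of variables attached to the neighbours of $n$. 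The Avramov--Huneke criterion for symmetric algebras then reduces both ``domain'' and ``complete intersection'' to height inequalities for the determinantal ideals $I_t$ of that matrix, and the degree bound $u<d$ is precisely what makes the criterion apply.

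\textbf{Part (2).} Your Step~A is correct and is a pleasant alternative proof that $L_G^\KK(d+1)$ is again a complete intersection. The difficulty is entirely in Step~B, and there you have only speculation. Neither the $O(d+1)$-action nor the determinantal link yields irreducibility without further input (the determinantal transfer goes the wrong way: properties of $L_G^\KK$ imply properties of $I_{d+1}(X_G)$, not conversely). The paper again uses the symmetric-algebra viewpoint: by induction $L_{G-n}^\KK(d+1)$ is prime, so by Huneke's criterion $L_G^\KK(d+1)$ is prime iff the ideals $I_t(Y)$ of the $u\times(d+1)$ neighbour-matrix satisfy $\height I_t(Y)\ge u-t+2$ over $R=S'/L_{G-n}^\KK(d+1)$. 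These heights are established by a degeneration: a suitable weight sends the generators of $L_{G-n}^\KK(d+1)$ to those of $L_{G-n}^\KK(d)$, and a lemma on appending a column of new indeterminates to a matrix converts the complete-intersection bounds $\height I_t(Y')\ge u-t+1$ (for the $u\times d$ matrix $Y'$, known from the hypothesis) into the required domain bounds for $I_t(Y)$. That ``add a column of variables and gain one in each height'' step is the missing idea in your Step~B.
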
 

As an immediate consequence we have: 

\begin{Corollary} 
  \label{cor:prime_and_ci} 
  Let $G = ([n],E)$ be a graph. Then:
  \begin{itemize} 
      \item[(1)] If $L_G^\KK(d)$ is prime (resp.~complete intersection) then $L_G^\KK(d+1)$ is prime (resp.~complete intersection).
     \item[(2)] If $L_G^\KK(d)$ is prime (resp.~complete intersection)  then 
       $L_{G'}^\KK(d)$ is prime (resp.~complete intersection)   for every subgraph 
       $G'$ of $G$. 
      \end{itemize} 
\end{Corollary}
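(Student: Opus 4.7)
My plan is to derive both parts from \ref{thm:prime_and_ci}, using one small graded-algebra lemma to handle the prime case of (2).

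Part (1) is a direct cascade. If $L_G^\KK(d)$ is prime, then by \ref{thm:prime_and_ci}(1) it is a complete intersection, and \ref{thm:prime_and_ci}(2) then gives that $L_G^\KK(d+1)$ is prime. If instead $L_G^\KK(d)$ is a complete intersection, then \ref{thm:prime_and_ci}(2) gives that $L_G^\KK(d+1)$ is prime, and a second application of \ref{thm:prime_and_ci}(1) at level $d+1$ shows that $L_G^\KK(d+1)$ is also a complete intersection.

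For part (2), the complete intersection statement rests on the standard fact that in the Cohen--Macaulay polynomial ring $S$ any sub-collection of a homogeneous regular sequence is itself a regular sequence. So from a regular-sequence presentation $\{f_e^{(d)} : e \in E(G)\}$ of $L_G^\KK(d)$ one reads off that the sub-collection $\{f_e^{(d)} : e \in E(G')\}$ is a regular sequence, and hence $L_{G'}^\KK(d)$ is a complete intersection.

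The prime case of part (2) is the only step that is not purely formal, and I would handle it with the following elementary graded observation: if $R$ is a positively graded ring, $f \in R$ is a homogeneous nonzerodivisor of positive degree, and $R/(f)$ is a domain, then $R$ is a domain. The proof is a minimality argument: among pairs of nonzero homogeneous elements $(a,b)$ with $ab = 0$ take one of minimum $\deg(a)+\deg(b)$; since $R/(f)$ is a domain, $f$ divides one of them, say $a = f a'$ with $a'$ homogeneous, and then $f$ being a nonzerodivisor reduces $ab = 0$ to $a'b = 0$ at strictly smaller total degree, contradicting minimality. Granting this, I would induct on $|E(G) \setminus E(G')|$, removing one edge $e$ at a time: the complete intersection conclusion just established ensures that $f_e^{(d)}$ is a nonzerodivisor modulo $L_{G-e}^\KK(d)$ (regular sequences of homogeneous elements can be reordered), and the quotient $S/L_G^\KK(d)$ is a domain by hypothesis, so the lemma yields that $L_{G-e}^\KK(d)$ is prime, and the induction proceeds down to $L_{G'}^\KK(d)$. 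I do not foresee any real obstacle; the only non-formal ingredient is the graded-domain lemma, which is classical.
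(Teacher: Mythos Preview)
Your proof is correct and follows the same route as the paper: part (1) is derived formally from \ref{thm:prime_and_ci}, the complete-intersection case of (2) is the trivial observation that subsets of homogeneous regular sequences are regular, and the prime case of (2) rests on the general fact that in a standard graded (or local) ring every subset of a regular sequence generating a prime ideal again generates a prime ideal. The only difference is cosmetic: the paper simply cites this last fact, whereas you spell it out via your graded-domain lemma and edge-by-edge induction.
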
 

In \ref{sec:pmd} we use these results to show that for $d$ large enough 
$L_G^\KK(d)$ is radical, prime and a complete intersection. To this end, 
for a graph $G=([n],E)$ we define a graph theoretic invariant 
$\pmd(G) \in \NN$, called the positive matching decomposition number of $G$. 
We prove in \ref{pmd} that $\pmd(G)\leq \min\{2n-3, |E|\}$ and that 
$\pmd(G)\leq \min\{n-1, |E|\}$ if $G$ is bipartite. We  show the 
following: 
 
\begin{Theorem} 
\label{thm:pmd} 
   Let $G = ([n],E)$ be a graph.   Then for  $d\geq \pmd(G)$ the ideal $L_G^\KK(d)$  
      is a radical complete intersection.  In particular, $L_G^\KK(d)$  is prime if  $d\geq \pmd(G)+1$. 
\end{Theorem}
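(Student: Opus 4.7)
The ``in particular'' clause follows from the first assertion combined with \ref{thm:prime_and_ci}(2): if $d\geq\pmd(G)+1$ then $d-1\geq\pmd(G)$, so $L_G^\KK(d-1)$ is a complete intersection by the first part, and hence $L_G^\KK(d)$ is prime. I therefore concentrate on showing that $L_G^\KK(d)$ is a radical complete intersection whenever $d\geq\pmd(G)$.

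The plan is to exhibit $\{f_e^{(d)}\}_{e\in E}$ as a Gr\"obner basis whose initial monomials are pairwise coprime squarefree quadratics. Fix a positive matching decomposition $E=E_1\sqcup\cdots\sqcup E_d$ supplied by the hypothesis $d\geq\pmd(G)$ (padding with empty parts if $\pmd(G)<d$). The ``positive'' clause built into the definition of $\pmd(G)$ is tailored precisely to produce real weights $a_{i,\ell}$ on the variables $y_{i,\ell}$ with the property that
\[ a_{i,\ell}+a_{j,\ell}\;>\;a_{i,k}+a_{j,k}\quad\text{for every }e=\{i,j\}\in E_\ell\text{ and every }k\in[d]\setminus\{\ell\}. \]
Setting $w(y_{i,\ell})=a_{i,\ell}$ and refining $w$ to a monomial order $\prec$, one then obtains $\ini_\prec(f_e^{(d)})=y_{i\ell}y_{j\ell}$ for every $e=\{i,j\}\in E_\ell$, since the displayed inequality guarantees that the $\ell$-th summand of $f_e^{(d)}=\sum_{k=1}^d y_{ik}y_{jk}$ strictly dominates each of the other $d-1$ summands.

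The collection $\mathcal M=\{\,y_{i\ell}y_{j\ell}:\{i,j\}\in E_\ell,\ \ell\in[d]\,\}$ is pairwise coprime: within a single slot $\ell$ the quadratics share no variable exactly because $E_\ell$ is a matching, while for distinct slots the variables involved are disjoint by construction. Hence $(\mathcal M)$ is a monomial complete intersection, and the standard Gr\"obner-basis lifting shows that the $f_e^{(d)}$ themselves form a regular sequence and a Gr\"obner basis of $L_G^\KK(d)$; in particular $L_G^\KK(d)$ is a complete intersection. Moreover, $\ini_\prec(L_G^\KK(d))=(\mathcal M)$ is a squarefree monomial ideal and therefore radical, and radicality descends from the initial ideal to $L_G^\KK(d)$. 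The substantive difficulty is thus not the commutative-algebraic side, which is routine, but the combinatorial content packaged into the definition of $\pmd(G)$: one must verify that for $d\geq\pmd(G)$ a positive matching decomposition really does supply the weights $a_{i,\ell}$ satisfying the displayed strict inequality, and this is precisely what the ``positive'' hypothesis is engineered to deliver.
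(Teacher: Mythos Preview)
Your approach is essentially the paper's: construct a term order $\prec$ with $\ini_\prec(f_e^{(d)})=y_{i\ell}y_{j\ell}$ for $e=\{i,j\}\in E_\ell$, note that these initial monomials are pairwise coprime and squarefree, and apply \ref{prop:transfer}; the prime statement then comes from \ref{thm:prime_and_ci}(2). This is exactly what the paper does via \ref{lem:termorder}.

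There is, however, one point you have glossed over. You claim that a single weight vector $(a_{i,\ell})$ satisfying $a_{i,\ell}+a_{j,\ell}>a_{i,k}+a_{j,k}$ for all $\{i,j\}\in E_\ell$ and all $k\neq\ell$ is ``precisely what the positive hypothesis is engineered to deliver.'' But the definition only gives, for each $\ell$, a function $w_\ell:V\to\RR$ with $w_\ell(i)+w_\ell(j)>0$ for $\{i,j\}\in E_\ell$ and $w_\ell(i)+w_\ell(j)<0$ for $\{i,j\}\in E_{\ell+1}\cup\cdots\cup E_p$; it says \emph{nothing} about $w_k(i)+w_k(j)$ when $\{i,j\}\in E_\ell$ and $k>\ell$, because that edge no longer lies in the hypergraph on which $E_k$ is declared a positive matching. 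Hence the naive assignment $a_{i,\ell}=w_\ell(i)$ need not satisfy your displayed inequality for $k>\ell$. The paper's \ref{lem:termorder} resolves this by using the weight vectors $\ww_1,\dots,\ww_p$ (with $\ww_\ell(y_{i\ell})=w_\ell(i)$ and $\ww_\ell(y_{ik})=0$ for $k\neq\ell$) \emph{lexicographically}: $\ww_1$ selects the leading monomial for edges in $E_1$ and simultaneously kills the $k=1$ summand of $f_e^{(d)}$ for every $e\notin E_1$, then $\ww_2$ acts on those initial forms, and so on inductively. One can also collapse this into a single weight vector by rescaling, e.g.\ $a_{i,\ell}=C^{p-\ell}w_\ell(i)$ for $C$ sufficiently large, but this extra step must actually be carried out and is not immediate from the definition.
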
 

To have an explicit bound is crucial in order 
to use this result and the connection between $I_{d+1}^\KK(X_G^\symmetric)$ and $L_G^\KK(d)$. 
Indeed, for deducing meaningful results, we need to single out cases where we can say something about 
$L_G^\KK(d)$ for $d \leq n-1$. 
The results described in the following paragraph can be seen as steps in this 
direction.

Already in \ref{sec:stabilization} we give necessary conditions for 
$L_G^\KK(d)$ to be prime  in terms of subgraphs of $G$, see \ref{obsKab}.
In particular, we prove that if $L_G^\KK(d)$ is prime then $G$ does not contain a complete bipartite subgraph $K_{a,b}$ 
with $a+b=d+1$ (i.e. $\bar{G}$ is $(n-d)$-connected).  Similar results are obtained for complete intersections.  
In \ref{sec:6} we show that while these conditions in general are only necessary
for small values of $d$ they can be used to characterize the properties.
For $d=1$ the characterization is obvious and in \cite{HMMW} it is proved 
that $L_G^\KK(2)$  is prime if and only if $G$ is a matching. 
We obtain the following: 

\begin{Theorem}
  \label{thm:prim3}
  Let $G$ be a graph. Then:
  \begin{itemize}
    \item[(1)] $L_G^\KK(3)$ is prime if and only if $G$  does not contain   $K_{1,3}$ and does not contain  $K_{2,2}$. 
    \item[(2)] $L_G^\KK(2)$ is a complete intersection  if and only if $G$  does not contain   $K_{1,3}$  and does not 
            contain $C_{2m}$ for some $m\geq 2$.     
  \end{itemize}
\end{Theorem}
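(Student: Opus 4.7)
\emph{Necessity.} For both parts the forward implications follow from the general necessary conditions already established. \ref{obsKab} shows that primeness of $L_G^\KK(d)$ prohibits every complete bipartite subgraph $K_{a,b}$ with $a+b=d+1$, so at $d=3$ the forbidden subgraphs are exactly $K_{1,3}$ and $K_{2,2}$, proving one direction of (1). The analogue for complete intersections at $d=2$ excludes $K_{1,3}$ by the same observation (three generators meeting at a single vertex already drop the height below the number of generators), and an explicit relation obtained by pairing the generators alternately around an even cycle shows that no $L_G^\KK(2)$ containing a $C_{2m}$ with $m\geq 2$ can be a complete intersection, proving one direction of (2).

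\emph{Sufficiency via reduction to connected components.} The condition ``$G$ contains no $K_{1,3}$'' is equivalent to $\Delta(G)\leq 2$, hence $G$ decomposes as a vertex-disjoint union of paths and cycles. The further assumption then restricts the cyclic components to length $\neq 4$ in part (1) and to odd length in part (2). Distinct components use disjoint sets of variables, so $S/L_G^\KK(d)$ is a tensor product of the component coordinate rings; after passing to $\overline{\KK}$ if needed to secure geometric integrality, primeness and the complete intersection property factor through the components. It therefore suffices to treat a single path or a single admissible cycle. The central combinatorial input is the pair of bounds $\pmd(P_n)\leq 2$ and $\pmd(C_{2m+1})\leq 2$, each established by exhibiting an explicit partition of the edges into two positive matchings (for the odd cycle the key point is that the ``odd'' edge can be absorbed into either class without creating a forbidden configuration); plugging these into \ref{thm:pmd} yields simultaneously that $L_G^\KK(2)$ is a radical complete intersection and that $L_G^\KK(3)$ is prime for every path and every odd cycle. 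This finishes part (2) in full and part (1) apart from the even cycles $C_{2m}$ with $m\geq 3$.

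\emph{The main obstacle.} The remaining case is primeness of $L_{C_{2m}}^\KK(3)$ for $m\geq 3$, where neither of the machinery tools is directly available: \ref{thm:prime_and_ci}(2) fails because part (2) has already told us that $L_{C_{2m}}^\KK(2)$ is not a complete intersection, and \ref{thm:pmd} fails because, for the same reason, $\pmd(C_{2m})\geq 3$. My plan is a direct argument. Write $C_{2m}=P_{2m}\cup\{2m,1\}$ so that $L_{C_{2m}}^\KK(3)=L_{P_{2m}}^\KK(3)+(f^{(3)}_{\{2m,1\}})$; the path case already gives that $S/L_{P_{2m}}^\KK(3)$ is a domain of the expected dimension, and the remaining step is to show that the single added quadric is a prime element in this quotient. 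I would do this either by extracting from the positive matching structure on $P_{2m}$ an explicit rational parametrization of the path variety and checking irreducibility of the closing equation in those parameters, or by a Bertini-style argument presenting the cycle variety as a family over the orthogonality variety of $P_{2m-2}$ with geometrically irreducible generic fibre. This closing step is the genuine difficulty, and is where all the work peculiar to even cycles is concentrated.
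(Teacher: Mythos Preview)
Your reduction to connected components and the treatment of paths via $\pmd(P_n)\le 2$ are correct and match the paper. The necessity arguments are also fine; in fact your direct observation that the three generators of $L_{K_{1,3}}^\KK(2)$ lie in the height-two ideal $(y_{11},y_{12})$ is slightly more elementary than the paper's route through \ref{thm:prime_and_ci}.

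The genuine gap is your claim that $\pmd(C_{2m+1})\le 2$. This is false for every $m\ge 1$: a positive matching is in particular a matching, and an odd cycle has edge-chromatic number $3$, so its $2m+1$ edges cannot be partitioned into two matchings at all. Concretely, the alternating $2$-colouring of the path edges puts $\{1,2\}$ and $\{2m,2m+1\}$ in different classes, so the closing edge $\{2m+1,1\}$ is adjacent to an edge of each class and cannot be ``absorbed'' into either. Hence \ref{thm:pmd} does not apply to odd cycles at $d=2$, and your proof of sufficiency collapses for odd cycles in both parts. In particular, your identification of the ``main obstacle'' is off: for part~(1) every cycle $C_n$ with $n=3$ or $n\ge 5$ (odd and even alike) requires a separate argument, and for part~(2) the odd cycles are precisely the nontrivial case.

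The paper handles cycles in (1) by deleting a vertex (not an edge), thereby realising $S/L_{C_n}^\KK(3)$ as a symmetric algebra over the domain $R=S'/L_{P_{n-1}}^\KK(3)$ with presentation matrix the $2\times 3$ submatrix $Y$ on rows $1$ and $n-1$; primeness then reduces via the Avramov--Huneke criterion (\ref{AHu}) to the height bounds $\height I_1(Y)\ge 3$ and $\height I_2(Y)\ge 2$ in $R$, which are checked by an explicit localisation argument. For (2) and odd cycles the paper shows that the closing generator $f^{(2)}_{\{1,2m+1\}}$ avoids every minimal prime of $L_{P_{2m+1}}^\KK(2)$, using the known primary decomposition of adjacent $2$-minors from \cite{DES}. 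Your proposed ``show the extra quadric is prime'' strategy is in the right spirit for (1) but, as written, is only a plan; the symmetric-algebra height computation is what actually makes it go through.
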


Here  $C_n$ denotes the cycle with $n$ vertices.  Finally for forests (i.e. graphs without cycles) we can give a  
complete picture.

\begin{Theorem}
  \label{thm:forest}
  Let $G$ be a forest  and denote by $\Delta(G)$  the maximal
degree of a vertex in $G$.  Then: 
  \begin{itemize}
    \item[(1)] $L_G^\KK(d)$ is radical for all $d$.
    \item[(2)] $L_G^\KK(d)$ is a complete intersection if and only if 
	       $d\geq \Delta(G)$.
    \item[(3)] $L_G^\KK(d)$ is prime if and only if $d\geq \Delta(G)+1$.  
  \end{itemize}  
\end{Theorem}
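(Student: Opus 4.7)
The plan is to use the general results from Sections~\ref{sec:stabilization} and~\ref{sec:pmd} to dispatch the necessary directions of (2) and (3) and to reduce the sufficient directions to a graph-theoretic bound on the positive matching decomposition number, then to address (1) separately by an inductive leaf-reduction argument.

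For the necessary directions: if $L_G^\KK(d)$ is prime then by \ref{obsKab} $G$ contains no complete bipartite subgraph $K_{a,b}$ with $a+b=d+1$, in particular no $K_{1,d}$, so $\Delta(G)\le d-1$; this gives the forward implication in (3). If $L_G^\KK(d)$ is a complete intersection, then Theorem~\ref{thm:prime_and_ci}(2) makes $L_G^\KK(d+1)$ prime, hence $\Delta(G)\le d$ by applying the same argument at level $d+1$, yielding the forward implication in (2).

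For the sufficient directions of (2) and (3), note that a complete intersection at level $d\ge\Delta(G)$ yields a prime ideal at level $d+1\ge\Delta(G)+1$ by Theorem~\ref{thm:prime_and_ci}(2), so it suffices to prove the sufficient direction of (2). By Theorem~\ref{thm:pmd} this reduces to establishing
$$\pmd(G)\le\Delta(G)\quad\text{for every forest } G.$$
I would prove this by induction on $|E(G)|$. Let $v$ be a leaf with unique neighbor $u$ and set $G'=G-v$. By induction $G'$ admits a positive matching decomposition $M_1,\dots,M_k$ with $k\le\Delta(G')\le\Delta(G)$. If $\deg_{G'}(u)<k$ some $M_i$ avoids $u$ and we extend it by $\{u,v\}$; otherwise $\deg_G(u)=k+1\le\Delta(G)$ and we append the singleton matching $\{\{u,v\}\}$ as a new $M_{k+1}$. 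Because $v$ is a new vertex touching only the new edge, the positivity condition for the extended decomposition should follow directly from that of $G'$, with the new matching placed last so that the fresh variables $y_{v\ell}$ interact only with the new edge.

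For part (1), the case $d\ge\Delta(G)$ is already contained in the previous paragraph via Theorem~\ref{thm:pmd}. For $d<\Delta(G)$ I would induct on $|V(G)|$ using the same leaf reduction: with $v$ a leaf, $u$ its neighbor, and $S$ the ambient polynomial ring,
$$L_G^\KK(d)=L_{G-v}^\KK(d)\,S+(f),\qquad f=\sum_{\ell=1}^d y_{u\ell}y_{v\ell},$$
and $L_{G-v}^\KK(d)$ is radical by induction. The polynomial $f$ is a linear form in the fresh variables $y_{v1},\dots,y_{vd}$ with coefficients $y_{u1},\dots,y_{ud}$. The key algebraic lemma to prove is therefore: if $I\subset R$ is radical and $f=\sum_\ell a_\ell y_\ell$ is a linear form in new variables $y_\ell$ with coefficients $a_\ell\in R$, then $I\,R[y_1,\dots,y_d]+(f)$ is radical as well. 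Geometrically this is immediate from a fibrewise analysis over $V(I)$ -- the fiber of $V(f)$ is a hyperplane when some $a_\ell\ne 0$ and all of $\mathbb{A}^d$ when every $a_\ell$ vanishes, both reduced -- but I expect the main obstacle to be converting this into a clean algebraic statement, most plausibly through an explicit primary decomposition along the loci cut out by the $a_\ell$ or through a generic-hyperplane transversality argument.
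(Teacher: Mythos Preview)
Your treatment of (2) and (3) is correct and coincides with the paper's: the necessary directions come from \ref{obsKab} and \ref{thm:prime_and_ci}, the sufficient ones from \ref{thm:pmd} together with the bound $\pmd(G)\le\Delta(G)$ for forests, which is exactly \ref{pmd}(3), proved there by the same leaf-induction you sketch.

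The gap is in (1). Your ``key algebraic lemma'' is false as stated. Take $R=\KK[s,t]$, $I=(st)$ (which is radical), $d=2$, and $f=sy_1+ty_2$. Then
\[
(ty_2)^2 \;=\; ty_2\,(sy_1+ty_2)\;-\;st\cdot y_1y_2 \;\in\;(st,\,sy_1+ty_2),
\]
whereas $ty_2\notin(st,\,sy_1+ty_2)$ (set $s=0$ and compare). So $I\,R[y_1,y_2]+(f)$ is not radical. Your fibrewise heuristic is exactly what breaks: over the branch $\{s=0\}$ the fiber of $V(f)$ is the hyperplane $\{y_2=0\}$, over $\{t=0\}$ it is $\{y_1=0\}$, and where the branches meet these two distinct hyperplanes collide to produce an embedded component. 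Fibrewise reducedness does not yield reducedness of the total space without a flatness-type hypothesis, and for small $d$ the ring $S'/L_{G-v}^{\KK}(d)$ is typically reducible with components meeting along loci where various $y_{u\ell}$ vanish, so there is no evident reason this phenomenon is avoided.

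The paper's route to (1) is substantially different: it shows in \ref{CaSt} that for a forest $G$ the ideal $L_G^{\KK}(d)$ is a Cartwright--Sturmfels ideal (in fact any ideal generated by $\ZZ^n$-graded forms of degree $\ee_i+\ee_j$ indexed by the edges of a forest is Cartwright--Sturmfels). The argument computes the $K$-polynomial in the complete-intersection case, invokes the duality with Cartwright--Sturmfels$^*$ ideals from \cite{CDG2}, and identifies the dual object with the product $\prod_{\{i,j\}\in E}(y_i,y_j)$, resolved by a tensor product of truncated Koszul complexes via leaf-induction. This bypasses the symmetric-algebra reducedness question entirely and yields the stronger conclusion that every initial ideal of $L_G^{\KK}(d)$ is radical.
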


In \ref{sec:minorgraph} we demonstrate in characteristic $0$ the above 
mentioned connection between $L^\KK_G(d)$ and $I_{d+1}^\KK(X_G^\symmetric)$.
Using the results from the preceding sections we deduce sufficient conditions 
for $I_{d+1}^\KK(X_G^\symmetric)$ to be radical, prime or of expected height. 
Similar results are obtained for coordinate hyperplane sections of the generic 
determinantal varieties and the generic skew-symmetric Pfaffian varieties. 
To our knowledge coordinate sections of determinantal varieties have been 
systematically studied only in the case of maximal minors, for example the results in 
\cite{Bo,E,GM}.  

In \ref{sec:obstruction} we use the results from \ref{sec:stabilization} 
and \ref{sec:minorgraph} to formulate obstructions that prevent $L^\KK_G(d)$ 
to be prime or a complete intersection. We also study the
exact asymptotics in terms of the number of vertices of the least $d$ such 
that $L_G^\KK(d)$ is prime for $G$ a complete and a complete bipartite graph. 
Finally, in \ref{sec:questions} we pose open problems, formulate
conjectures and exhibit a relation between hypergraph LSS-ideals and coordinate 
sections of bounded rank tensor varieties. 

To complete the outline of the paper we mention that \ref{sec:generalities} 
sets up the graph theory and Gr\"obner theory notation. \ref{sec:results} 
recalls results from \cite{HMMW} for the case $d = 2$ which in particular show 
that $L_G^\KK(2)$ is always radical if $\chara \KK \neq 2$. 
We then exhibit and 
discuss counterexamples which demonstrate that this is not the case for $d=3$.

\medskip  

\noindent {\bf Acknowledgment}: we thank Alessio D'Ali, Alessio Sammartano and  Lorenzo Venturello for useful discussion concerning the material presented. 

\section{Notations and generalities} 
\label{sec:generalities}

\subsection{Graph and Hypergraph Theory} 
\label{sec:graphtheory}
In the following we introduce graph theory notation. We mostly follow the
conventions from \cite{D}.
For us a graph $G = (V,E)$ is a simple graph on a finite vertex set $V$.
In particular, $E$ is a subset of the set of $2$-element subsets 
${V \choose 2}$ of $V$. In most of the cases we assume that 
$V=[n]= \{1,\ldots,n\}$. 
A subgraph of a graph $G = (V,E)$ is a graph $G' = (V',E')$ such that
$V' \subseteq V$ and $E' \subseteq E$.
Given two graphs $G$ and $G'$ we say that $G$ contains $G'$ if $G$ has a 
subgraph isomorphic to $G'$.  

More generally, a hypergraph $H = (V,E)$ is a pair consisting of a finite set
of vertices $V$ and a set $E$ of subsets of $V$. We are only interested 
in the situation when the sets in $E$ are inclusionwise incomparable.
Such a set of subsets is called a clutter.

For $m,n>0$ we will use the following notations:  

\begin{itemize}
  \item $K_n$ denotes  the complete graph on $n$ vertices, i.e. 
     $K_n=([n],\{ \{i,j\} : 1\leq i<j\leq n\})$, 
  \item $K_{m,n}$ denotes the complete bipartite graph 
     $([m] \cup [\tilde{n}], \{ \{i,\tilde{j}\} ~:~i \in [m], \tilde{j}
     \in [\tilde{n}]~\}$  with bipartition 
     $[m]$ and $[\tilde{n}] = \{ \tilde{1},\ldots, \tilde{n}\}$. 
  \item $B_n$ denotes the subgraph of $K_{n,n}$
     obtained by removing the edges $ \{i, \tilde{i} \}$ for $i=1,\dots, n$. 
  \item For $n>2$ we denote by $C_n$ the cycle with $n$ vertices, i.e. the subgraph of 
     $K_{n}$ with edges $\{1,2\},\{2,3\},\dots,$ $\{n-1,n\}, \{n,1\}$. 
  \item For $n>1$ we denote by $P_n$ the path with $n$ vertices, i.e. the 
     subgraph of $K_{n}$ with edges 
     $\{1,2\},\{2,3\},\dots, \{n-1,n\}$. 
\end{itemize}
 
We denote by $\bar{G} = (V,\bar{E})$ with 
$\bar{E} = {V \choose 2} \setminus E$ the graph complementary to $G = (V,E)$.
Let $W \subseteq V$. We write $G_W = (W,\{e\in E~:~e \subseteq W\})$
for the graph induced by $G$ on vertex set $W$ and $G-W$ 
for the subgraph induced by $G$ on $V \setminus W$. In case
$W = \{v\}$ for some $v \in V$ we simply write $G-v$ for $G-\{v\}$. 

A graph $G=([n],E)$  with $n \geq k+1$ is 
called $k$-(vertex)connected if 
for every $W\subset V$ with $|W|=k-1$ the graph $G-W$ is connected.  
The degree $\deg(v)$ of a vertex $v$ of $G$ is   $|\{ e \in E~|~v \in e\}|$  and 
  $\Delta(G)= \max_{v \in V} \deg(v)$.  
Clearly, if $G=([n],E) $ is $k$-connected then every vertex has degree
at least $k$ and $\Delta(\bar{G}) \leq n-k-1$. 
We denote by   $\omega(G)$  the clique number of $G$, i.e.  the largest $a$ such that
$G$ contains $K_a$.  The following well known fact follows directly from the definitions.

\begin{Lemma}
  \label{lem:n-d}
  Given a graph $G=([n],E)$ and an integer $1 \leq d \leq n$ the 
  following conditions are 
  equivalent: 
  \begin{itemize} 
    \item[(1)] $\bar{G}$ is $(n-d)$-connected. 
    \item[(2)] $G$ does not contain  $K_{a,b}$ with $a+b=d+1$. 
  \end{itemize}
\end{Lemma}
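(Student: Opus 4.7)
The plan is to prove the two implications directly from the definitions of vertex connectivity and of containment of $K_{a,b}$, using the simple but crucial observation that pairs of vertices not joined by an edge of $\bar G$ are precisely the pairs joined by an edge of $G$. Both directions rest on producing, from a ``bipartite-like'' structure on one side, a separating set on the other.

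For the implication (1)$\Rightarrow$(2), I would argue by contraposition. Assume $G$ contains a copy of $K_{a,b}$ with $a+b=d+1$, and let $A,B\subseteq[n]$ be the two sides of this copy, so $|A|+|B|=d+1$ and every pair $\{a,b\}$ with $a\in A$, $b\in B$ is an edge of $G$. Set $W=[n]\setminus(A\cup B)$; then $|W|=n-d-1$. Since each $G$-edge between $A$ and $B$ is a non-edge of $\bar G$, the induced subgraph $\bar G-W$ contains no edge between $A$ and $B$, hence is disconnected. This shows $\bar G$ has a vertex cut of size $n-d-1<n-d$, so $\bar G$ is not $(n-d)$-connected.

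For (2)$\Rightarrow$(1), I would likewise contrapose. Suppose $\bar G$ is not $(n-d)$-connected. Then there is $W\subseteq[n]$ with $|W|\le n-d-1$ such that $\bar G-W$ is disconnected; write its vertex set $[n]\setminus W$ as a disjoint union $A\sqcup B$ of two nonempty sets with no $\bar G$-edge between them. Every pair $\{a,b\}$ with $a\in A$, $b\in B$ must then be an edge of $G$, so $G$ contains $K_{|A|,|B|}$ with $|A|+|B|=n-|W|\ge d+1$. To hit the target $a+b=d+1$ exactly, I would trim: pick $A'\subseteq A$, $B'\subseteq B$ with $|A'|=\min(|A|,d)$ and $|B'|=d+1-|A'|$; since $|A|+|B|\ge d+1$ and both sides are nonempty, $B'\subseteq B$ is realisable, and $G$ contains the desired $K_{a,b}$.

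There is essentially no obstacle here; the lemma is a direct translation between the dual combinatorial notions of ``small vertex cut'' and ``large complete bipartite subgraph of the complement.'' The only technical point worth being explicit about is the trimming step in the second direction, ensuring one can reduce $|A|+|B|$ from ``$\ge d+1$'' to ``$=d+1$'' without losing the complete bipartite structure, which is immediate since induced subgraphs of a complete bipartite graph on nonempty parts remain complete bipartite.
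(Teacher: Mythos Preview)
Your proof is correct and is exactly the kind of argument the paper has in mind: the authors do not actually give a proof, stating only that the lemma ``follows directly from the definitions.'' Your contrapositive argument in each direction, passing between a separating set $W$ in $\bar G$ and a complete bipartite subgraph on $[n]\setminus W$ in $G$, is the natural unpacking of that remark, and the trimming step you include is the only point requiring a moment's care.
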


\subsection{Basics on LSS-ideals and their generalization to hypergraphs}
\label{sec:lssbasics}

Let $H = ([n],E)$ be a hypergraph.  For an integer $d \geq 1$ we consider the polynomial ring 
    $S=\KK[y_{i,\ell}~|~i \in [n], \ell \in [d]]$. We define for $e \in E$
$$f_e^{(d)}= \sum_{\ell =1}^d \prod_{i \in e} y_{i\ell}.$$ 
If $E$ is a clutter we call the ideal 
$$L_H^\KK(d) = (\, f_e^{(d)} ~|~ e\in E \,) \subseteq S$$ 
the LSS-ideal of the hypergraph $H$.

It will sometimes be useful to consider $L_H^\KK(d)$ as a 
multigraded ideal. For that we equip 
$S$ with the 
multigrading induced by $\deg(y_{i,\ell}) = \ee_i$ for the $i$-th unit vector
$\ee_i$ in $\ZZ^n$ and $(i,\ell) \in [n] \times [d]$. 
Clearly, for $e \in E$ the polynomial $f_e^{(d)}$ is multigraded
of degree $\sum_{i \in e} \ee_i$. In particular,  
$L_H^\KK(d)$ is $\ZZ^n$-multigraded. 
The following remark is an immediate consequence of the fact that if
$E$ is a clutter 
the two polynomials $f_e^{(d)}$ and $f_{e'}^{(d)}$ 
corresponding to distinct edges $e,e' \in E$ 
have incomparable multidegrees. 

\begin{Remark}
  \label{rem:minimal}
  Let $H = ([n],E)$ be a hypergraph such that $E$ is clutter.
  The generators $f_e^{(d)}$, $e \in E$, of $L_H^\KK(d)$ form a minimal system 
  of generators. In particular, $L_H^\KK(d)$ is a complete intersection if and 
  only if the polynomials $f_e^{(d)}$, $e \in E$, form a regular sequence. 
\end{Remark}

The following alternative description of $L_G^\KK(d)$ for a graph $G$ 
turns out to be helpful in some places.

\begin{Remark} 
  \label{rem:LSSasDet}
  Let $G=([n],E)$ be a graph. 
  Consider the $n\times d$ matrix $Y=( y_{i,\ell} )$. Then $L_G^\KK(d)$ is 
  the ideal generated by the entries of the matrix $YY^T$
  in positions $(i,j)$ with $\{i,j\}\in E$. 
  Here $Y^T$ denotes the transpose of $Y$. 

  Similarly, for a bipartite graph $G$, say a subgraph of $K_{m,n}$, one 
  considers two sets of variables $y_{ij}$ with $(i,j)\in [m]\times [d]$,
	$z_{ij}$ with $(i,j)\in [d]\times [n]$ and  
  the matrices 
  $Y=(y_{ij})$ and $Z=(z_{ij})$. 
  Then $L_G^\KK(d)$ coincides (after renaming the variables in the obvious way) with the ideal 
  generated by the entries  of the product matrix $YZ$
  in positions $(i,j)$ for $\{i,\tilde{j}\}\in E$. 
\end{Remark}

\subsection{Gr\"obner Bases} 
We use the following notations and facts from Gr\"obner bases theory, see for 
example \cite{BC}. 
Consider the polynomial ring $S=\KK[x_1,\dots,x_m]$.
For a vector $\ww = (w_i : i\in [m])\in \RR^m$
and a non-zero polynomial 
$$f = \displaystyle{\sum_{\alpha \in \NN^{[m]}} a_\alpha x^\alpha}$$ we set 
$m_\ww (f) = \max_{a_\alpha \neq 0} \{ \alpha \cdot \ww \}$
and 
$$\ini_\ww(f) = \sum_{\alpha \cdot \ww = m_\ww(f)} a_\alpha  x^\alpha.$$
The latter is called the initial form  of $f$ with respect to $\ww$. 
For an ideal $I$ we denote by  $\ini_\ww(I)$ the ideal generated by $\ini_\ww(f)$ with $f\in I\setminus\{0\}$. 
For a   term order $\prec$ we denote similarly by $\ini_\prec(f)$ the largest term of $f$ and by  $\ini_\prec(I)$ the ideal generated by $\ini_\prec(f)$ with $f\in I\setminus\{0\}$.   
The following will allows us to deduce properties of ideals from properties of
their initial ideals.

\begin{Proposition}
  \label{prop:transfer}  
  Let $I$ be a homogeneous ideal in the polynomial ring $S$ and let $\tau$ be either a term order  $\prec$  or a vector $\ww \in \RR^{m}$.
   If $\ini_\tau(I)$ is radical  (resp.~a
  complete intersection, resp.~prime) then so is $I$. 
  Moreover, if $I=(f_1,\dots, f_r)$  and the elements  $\ini_\tau(f_1),\ldots, \ini_\tau(f_r)$    form a regular sequence   then $f_1,\ldots, f_r$ form a regular sequence and  
  $\ini_\tau(I)=( \ini_\tau(f_1),\ldots, \ini_\tau(f_r) )$.   
  \end{Proposition}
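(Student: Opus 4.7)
The plan is to leverage three standard ingredients for a homogeneous ideal $I$ and $\tau$ either a term order or a weight vector $\ww$ (shifting if necessary so that $\ww \geq 0$): \textbf{(i)} $S/I$ and $S/\ini_\tau(I)$ have the same Hilbert series, so in particular $\height I = \height \ini_\tau(I)$; \textbf{(ii)} since $S$ is a domain, $\ini_\tau(fg) = \ini_\tau(f)\ini_\tau(g)$, whence $\ini_\tau(\sqrt I) \subseteq \sqrt{\ini_\tau(I)}$; \textbf{(iii)} if homogeneous $J \subseteq I$ satisfy $\ini_\tau(J) = \ini_\tau(I)$, then $J = I$ (an immediate consequence of (i)). Fact (i) comes from the usual flat deformation: forming $R = S[t]$ with a family $R/\widetilde I$ flat over $\KK[t]$ whose generic fibre is $S/I$ and whose special fibre is $S/\ini_\tau(I)$. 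With these facts in hand, each property can be attacked directly.

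First I would handle the \emph{radical} case: assuming $\ini_\tau(I)$ is radical, (ii) gives $\ini_\tau(\sqrt I) \subseteq \sqrt{\ini_\tau(I)} = \ini_\tau(I) \subseteq \ini_\tau(\sqrt I)$, so the chain is an equality, and (iii) applied to $I \subseteq \sqrt I$ yields $I = \sqrt I$. Next, for the \emph{prime} case: if $\ini_\tau(I)$ is prime then $I$ is radical by the previous step, so $I = \bigcap_i \mathfrak p_i$ is the intersection of its minimal primes; I pick one such $\mathfrak p$ with $\height \mathfrak p = \height I$. Then $\ini_\tau(I) \subseteq \ini_\tau(\mathfrak p)$ and by (i) both have the same height, so in the domain $S/\ini_\tau(I)$ the image of $\ini_\tau(\mathfrak p)$ is a height-zero ideal, hence zero. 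Thus $\ini_\tau(\mathfrak p) = \ini_\tau(I)$, and (iii) gives $\mathfrak p = I$, so $I$ is prime.

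Finally, for the ``moreover'' clause: assume $I = (f_1,\ldots,f_r)$ and that $\ini_\tau(f_1),\ldots,\ini_\tau(f_r)$ is a regular sequence. The ideal it generates has height $r$ and sits inside $\ini_\tau(I)$, while Krull's principal ideal theorem gives $\height I \leq r$; combined with (i) this forces $\height I = r$, and the Cohen--Macaulayness of $S$ then makes $f_1,\ldots,f_r$ itself a regular sequence. Both $S/I$ and $S/(\ini_\tau(f_1),\ldots,\ini_\tau(f_r))$ therefore have Hilbert series $\prod_i(1-t^{\deg f_i})/(1-t)^n$, so (iii) identifies the two initial ideals. For the \emph{complete intersection} case of the first clause, I would take a regular sequence $g_1,\ldots,g_r$ of minimal generators of $\ini_\tau(I)$, lift each $g_i$ to some $f_i \in I$ with $\ini_\tau(f_i) = g_i$, conclude $(f_1,\ldots,f_r) = I$ by matching initial ideals via (iii), and then invoke the moreover part just established. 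The main obstacle here is not deep: it is really the bookkeeping required to check that (i)--(iii) hold uniformly for both term orders and weight vectors, which is standard once the flat degeneration $R/\widetilde I$ is set up.
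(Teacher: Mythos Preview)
Your argument is correct. Note, however, that the paper does not actually supply a proof of this proposition: it is stated in the preliminary section on Gr\"obner bases as a standard fact, with the surrounding discussion pointing to \cite{BC} for background. So there is no ``paper's own proof'' to compare against; you have written out what the authors take for granted.

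A couple of small points worth tightening. In the prime case, when you say ``the image of $\ini_\tau(\mathfrak p)$ is a height-zero ideal, hence zero,'' you are implicitly using that $S$ is catenary and equicodimensional: from $\ini_\tau(I)\subseteq\ini_\tau(\mathfrak p)$ with both of height $h$ you pick a minimal prime $Q$ of $\ini_\tau(\mathfrak p)$ of height $h$, and then $\ini_\tau(I)\subseteq Q$ with $\height Q=\height\ini_\tau(I)$ forces $Q=\ini_\tau(I)$, hence $\ini_\tau(\mathfrak p)\subseteq\ini_\tau(I)$. That is fine in a polynomial ring but deserves a word. In the complete-intersection case, the lifting step ``lift each $g_i$ to some $f_i\in I$ with $\ini_\tau(f_i)=g_i$'' is immediate for a term order (the $g_i$ are monomials in $\{\ini_\prec(f):f\in I\}$) but for a weight vector you should remark that $\ini_\ww(I)$ is the associated graded of $I$ for the $\ww$-filtration, so every $\ww$-homogeneous element of $\ini_\ww(I)$ is $\ini_\ww(f)$ for some $f\in I$; choosing the $g_i$ bihomogeneous (standard and $\ww$) then does the job. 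With those two clarifications the write-up is complete.
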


\section{Known results and counterexamples for Lov\'asz-Saks-Schrijver ideals}
\label{sec:results}

We recall results  from \cite{HMMW} and present examples showing that 
$L_G^\KK(3)$ is not radical in general.  First observe that, for obvious 
reasons, $L_G^\KK(1)$ is radical, it is a complete intersection if and only 
if $G$ is a matching and it is prime if and only if $G$ has no edges. 
For $d =2$ the following result from \cite{HMMW} gives a complete answer for 
two of the three properties under discussion. 

\begin{Theorem}[Thm.1.1, Thm.1.2, Cor.5.3 in  \cite{HMMW} ]
  \label{HMMW}
  Let $G = ([n],E)$ be a graph.
  If $\chara \KK\neq 2$ then the ideal $L_G^\KK(2)$ is radical. 
  If $\chara \KK= 2$ then  $L_G^\KK(2)$ is radical if and only if $G$ is 
    bipartite. Furthermore, $L_G^\KK(2)$ is prime if and only if $G$ 
	is a matching. 
\end{Theorem}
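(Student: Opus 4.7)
The plan is to handle the three statements independently, as they use distinct techniques.

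\emph{Primeness $\iff$ matching.} For the ``if'' direction, when $G$ is a matching the generators $f_{e}^{(2)}$ involve pairwise disjoint sets of four variables, so $S/L_G^\KK(2)$ factors as a $\KK$-tensor product of rings of the form $\KK[a,b,c,d]/(ab+cd)$; each factor is a geometrically integral smooth affine quadric (the associated bilinear form is non-singular in every characteristic), hence a geometrically integral domain, and their tensor product over $\KK$ is again a domain. Conversely, any non-matching contains $P_3$ as a subgraph, so by \ref{cor:prime_and_ci}(2) it suffices to show that $L_{P_3}^\KK(2)$ is not prime. As witness I would take $D:=y_{11}y_{32}-y_{12}y_{31}$: a short computation using $y_{11}y_{21}\equiv -y_{12}y_{22}$ and $y_{21}y_{31}\equiv -y_{22}y_{32}$ modulo $L_{P_3}^\KK(2)$ shows $y_{21}D,\,y_{22}D\in L_{P_3}^\KK(2)$, while the $(\ee_1+\ee_3)$-multigraded component of $L_{P_3}^\KK(2)$ vanishes (the two generators sit in multidegrees $\ee_1+\ee_2$ and $\ee_2+\ee_3$), so $D\notin L_{P_3}^\KK(2)$.

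\emph{Radicality when $\chara\KK\neq 2$.} The plan is to diagonalize the standard bilinear form. Base-change to a quadratic extension containing $\sqrt{-1}$ (this preserves radicality by faithful flatness) and perform the invertible substitution $u_i=y_{i1}+\sqrt{-1}\,y_{i2}$, $v_i=y_{i1}-\sqrt{-1}\,y_{i2}$; each generator then becomes, up to a nonzero scalar, the ``permanent'' $u_iv_j+u_jv_i$. Under the lexicographic order $u_1>\cdots>u_n>v_1>\cdots>v_n$, the generator for $\{i,j\}$ with $i<j$ has the squarefree leading monomial $u_iv_j$. I would then verify Buchberger's criterion by tracking S-pairs: for edges $\{i,j\}$ and $\{i,k\}$ incident at $i$, with $i<j<k$, the S-polynomial reduces to $v_i(u_jv_k-u_kv_j)$, whose leading monomial $u_jv_iv_k$ is again squarefree. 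Iterating, one obtains a Gr\"obner basis all of whose leading monomials are squarefree, so $\ini_\prec(L_G^\KK(2))$ is a squarefree monomial ideal, hence radical; \ref{prop:transfer} then transports radicality back to $L_G^\KK(2)$. The main care-point here is to give an inductive description of the Gr\"obner basis valid for an arbitrary graph $G$ that makes the squarefreeness of every leading monomial transparent.

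\emph{Characteristic $2$.} The bipartite and non-bipartite cases need quite different treatments. If $G$ is bipartite with bipartition $V=A\sqcup B$, the substitution $y_{j1}\leftrightarrow y_{j2}$ for every $j\in B$ turns the generator for $\{i,j\}$ with $i\in A$, $j\in B$ into $y_{i1}y_{j2}+y_{i2}y_{j1}$, which in characteristic $2$ coincides with the $2\times 2$ minor of the generic $2\times n$ matrix $(y_{i\ell})$ on columns $i,j$. A standard Gr\"obner-basis argument on subideals of $2\times 2$ minors of a $2\times n$ matrix (with the diagonal term order) then delivers a squarefree initial ideal, and radicality follows from \ref{prop:transfer}. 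For non-bipartite $G$, I would construct an explicit element of $S$ that is nilpotent but nonzero modulo $L_G^{\FF_2}(2)$, exploiting the fact that in characteristic $2$ the quadratic form $y_1^2+y_2^2=(y_1+y_2)^2$ is a perfect square, so that the isotropic locus is a non-reduced double line. Candidate nilpotents are products indexed by an odd closed walk in $G$ of the quadratic socle-like elements $\delta_{ij}=y_{i1}y_{j2}+y_{i2}y_{j1}$ (which, when $\{i,j\}\in E$, satisfy $y_{k\ell}\delta_{ij}\in L_G^{\FF_2}(2)$ for every variable $y_{k\ell}$). Verifying that such a product is not itself in $L_G^{\FF_2}(2)$, most cleanly via a multigraded Hilbert-function count, is in my view the most delicate technical step.
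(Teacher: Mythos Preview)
This theorem is not proved in the paper; it is quoted from \cite{HMMW}. The only argument the paper adds concerns the ``only if'' direction of the primeness characterization in arbitrary characteristic: the paper simply invokes \ref{obsKab}, which for $d=2$ says that if $L_G^\KK(2)$ is prime then $G$ contains no $K_{1,2}=P_3$, i.e.\ $G$ is a matching. Your argument for this direction is correct but more roundabout---you pass through \ref{cor:prime_and_ci}(2) (whose proof already rests on \ref{obsKab} via \ref{thm:prime_and_ci}) and then exhibit an explicit zero-divisor in $S/L_{P_3}^\KK(2)$; the paper's route is one line. Your ``if'' direction via a tensor product of smooth quadrics is fine and characteristic-free.

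For the radicality assertions you are proposing to re-prove the results of \cite{HMMW}. The change of variables to permanents $u_iv_j+u_jv_i$ when $\chara\KK\neq 2$, and the reduction to binomial edge ideals in the bipartite characteristic-$2$ case, are the right first moves and match the strategy there. But, as you yourself flag, closing up the Gr\"obner basis for an arbitrary graph is the real content; one S-pair computation for incident edges does not yet do this. The non-bipartite characteristic-$2$ case is the most speculative part of your proposal: the candidate nilpotents built from the $\delta_{ij}$ along an odd walk are plausible, but showing that such an element lies in the radical yet outside the ideal needs a genuine argument (in \cite{HMMW} this is handled through an explicit primary decomposition), and a multigraded Hilbert-function count alone cannot distinguish an ideal from its radical.
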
 

Indeed, in \cite{HMMW} the characterization of the graphs $G$ for which 
$L_G^\KK(2)$ is prime is given under the assumption that $\chara \KK\neq 1,2$ 
mod $(4)$ but it  turns out that the statement holds as well in arbitrary 
characteristic (see \ref{obsKab} for the missing details). 

The next examples show that $L_G^\KK(3)$ need not be 
radical. In the examples we assume that $\KK$ has characteristic $0$ but  
we consider it very likely that the ideals are not radical over any field.

A quick criterion implying that an ideal $J$ in a ring $S$ is not 
radical is to identify an element $g\in S$ such that  $J:g\neq J:g^2$.  
We call such a $g$ a witness (of the fact that $J$ is not radical). 
Of course the potential witnesses must be sought among the elements that are 
 ``closely related''  to $J$. Alternatively, one can try to 
compute the radical of $J$ or even its primary decomposition directly and 
read off whether $J$ is radical.  
But these direct computations are extremely time consuming for LSS-ideals 
and did not terminate on our computers in the examples below. 
Nevertheless, in all examples we have quickly identified witnesses. 
  
\begin{figure}

  ~\vskip-3cm
  \begin{tabular}{ccc}
    \begin{minipage}[t]{0.3\textwidth}
      \begin{picture}(0,0)%
	  \includegraphics[width=\textwidth]{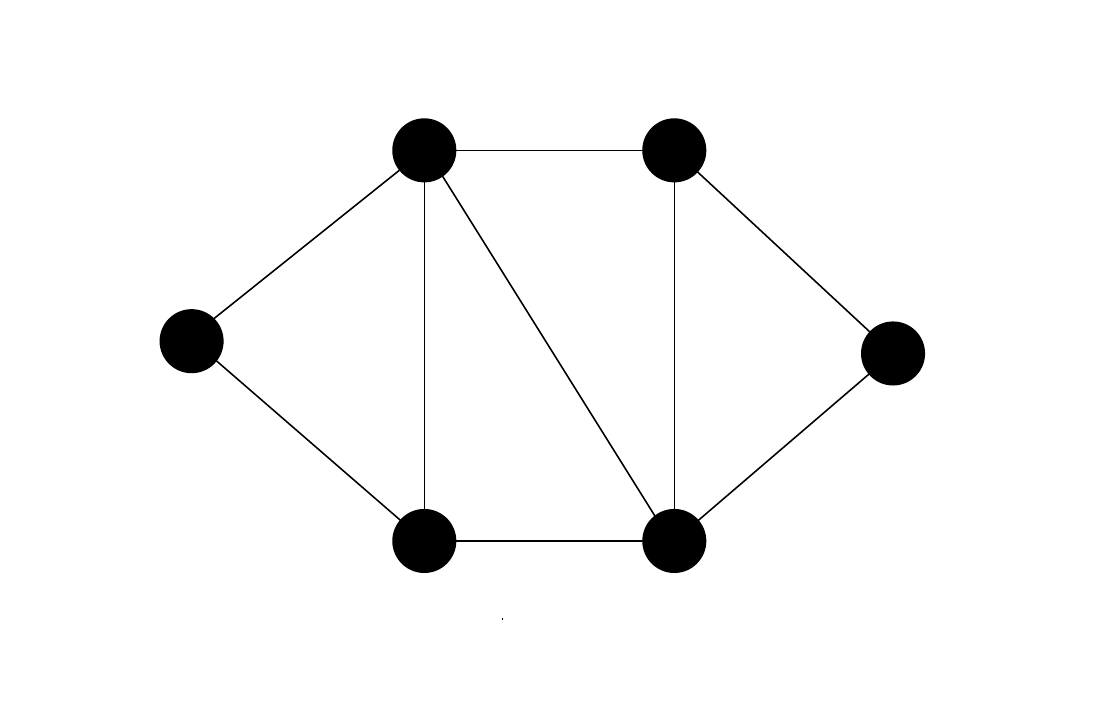}%
      \end{picture}%
      \setlength{\unitlength}{3947sp}%
      \begingroup\makeatletter\ifx\SetFigFont\undefined%
        \gdef\SetFigFont#1#2#3#4#5{%
        \reset@font\fontsize{#1}{#2pt}%
        \fontfamily{#3}\fontseries{#4}\fontshape{#5}%
        \selectfont}%
      \fi\endgroup%

      \begin{picture}(5349,3425)(1414,-3148)
        \put(1550,950){\makebox(0,0)[lb]{\smash{{\SetFigFont{12}{24.0}{\rmdefault}{\mddefault}{\updefault}{\color[rgb]{0,0,0}$5$}%
}}}}
        \put(2050,450){\makebox(0,0)[lb]{\smash{{\SetFigFont{12}{24.0}{\rmdefault}{\mddefault}{\updefault}{\color[rgb]{0,0,0}$3$}%
}}}}
        \put(2050,1450){\makebox(0,0)[lb]{\smash{{\SetFigFont{12}{24.0}{\rmdefault}{\mddefault}{\updefault}{\color[rgb]{0,0,0}$1$}%
}}}}
        \put(2550,1450){\makebox(0,0)[lb]{\smash{{\SetFigFont{12}{24.0}{\rmdefault}{\mddefault}{\updefault}{\color[rgb]{0,0,0}$4$}%
}}}}
        \put(2550,450){\makebox(0,0)[lb]{\smash{{\SetFigFont{12}{24.0}{\rmdefault}{\mddefault}{\updefault}{\color[rgb]{0,0,0}$2$}%
}}}}
        \put(3200,950){\makebox(0,0)[lb]{\smash{{\SetFigFont{12}{24.0}{\rmdefault}{\mddefault}{\updefault}{\color[rgb]{0,0,0}$6$}%
}}}}
      \end{picture}%

	    ~\vskip-7.7cm\centerline{(1)} 	     
    \end{minipage}
  & 
    \begin{minipage}[t]{0.3\textwidth}
      \begin{picture}(0,0)(0,-15)%
	\includegraphics[width=0.9\textwidth]{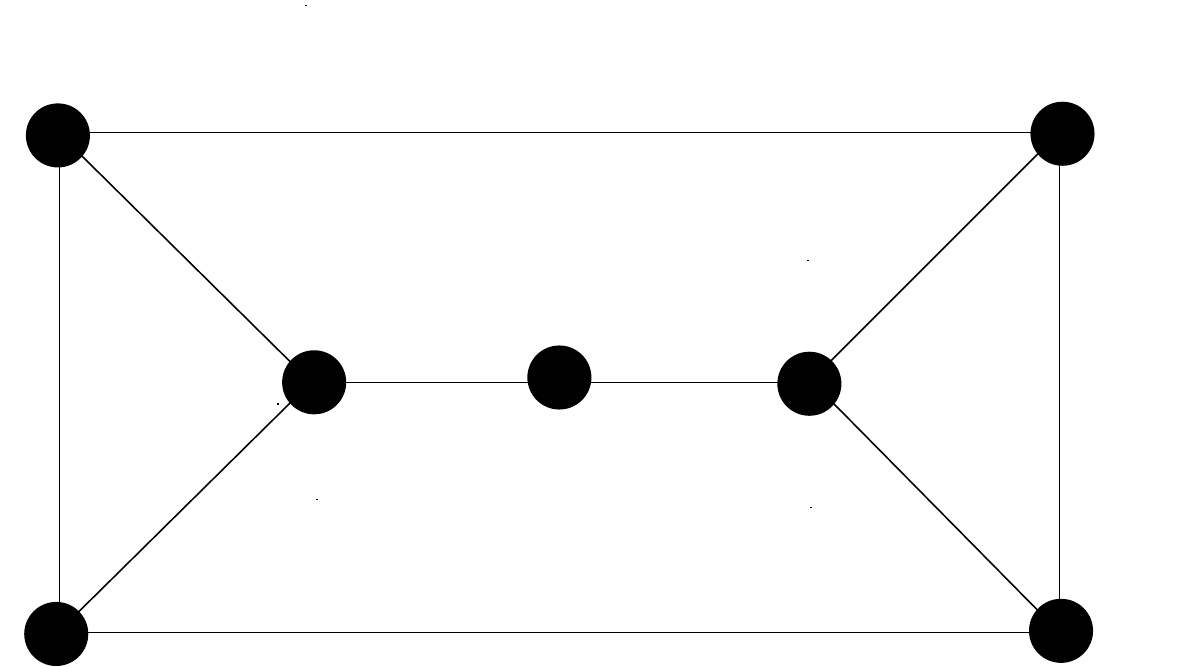}%
      \end{picture}%
      \setlength{\unitlength}{3947sp}%
      \begingroup\makeatletter\ifx\SetFigFont\undefined%
        \gdef\SetFigFont#1#2#3#4#5{%
        \reset@font\fontsize{#1}{#2pt}%
        \fontfamily{#3}\fontseries{#4}\fontshape{#5}%
        \selectfont}%
        \fi\endgroup%
      \begin{picture}(5349,3425)(1414,-3148)
        \put(1300,-2000){\makebox(0,0)[lb]{\smash{{\SetFigFont{12}{24.0}{\rmdefault}{\mddefault}{\updefault}{\color[rgb]{0,0,0}$1$}%
}}}}
       \put(3200,-2000){\makebox(0,0)[lb]{\smash{{\SetFigFont{12}{24.0}{\rmdefault}{\mddefault}{\updefault}{\color[rgb]{0,0,0}$2$}%
}}}}
        \put(3200,-3000){\makebox(0,0)[lb]{\smash{{\SetFigFont{12}{24.0}{\rmdefault}{\mddefault}{\updefault}{\color[rgb]{0,0,0}$3$}%
}}}}
        \put(1300,-3000){\makebox(0,0)[lb]{\smash{{\SetFigFont{12}{24.0}{\rmdefault}{\mddefault}{\updefault}{\color[rgb]{0,0,0}$4$}%
}}}}
        \put(1700,-2500){\makebox(0,0)[lb]{\smash{{\SetFigFont{12}{24.0}{\rmdefault}{\mddefault}{\updefault}{\color[rgb]{0,0,0}$5$}%
}}}}
        \put(2250,-2350){\makebox(0,0)[lb]{\smash{{\SetFigFont{12}{24.0}{\rmdefault}{\mddefault}{\updefault}{\color[rgb]{0,0,0}$6$}%
}}}}
        \put(2800,-2500){\makebox(0,0)[lb]{\smash{{\SetFigFont{12}{24.0}{\rmdefault}{\mddefault}{\updefault}{\color[rgb]{0,0,0}$7$}%
}}}}
      \end{picture}%

      \centerline{(2)}
    \end{minipage}
  & 
    \begin{minipage}[t]{0.3\textwidth}

	    \begin{picture}(0,0)(0,-15)%
	\includegraphics[width=0.9\textwidth]{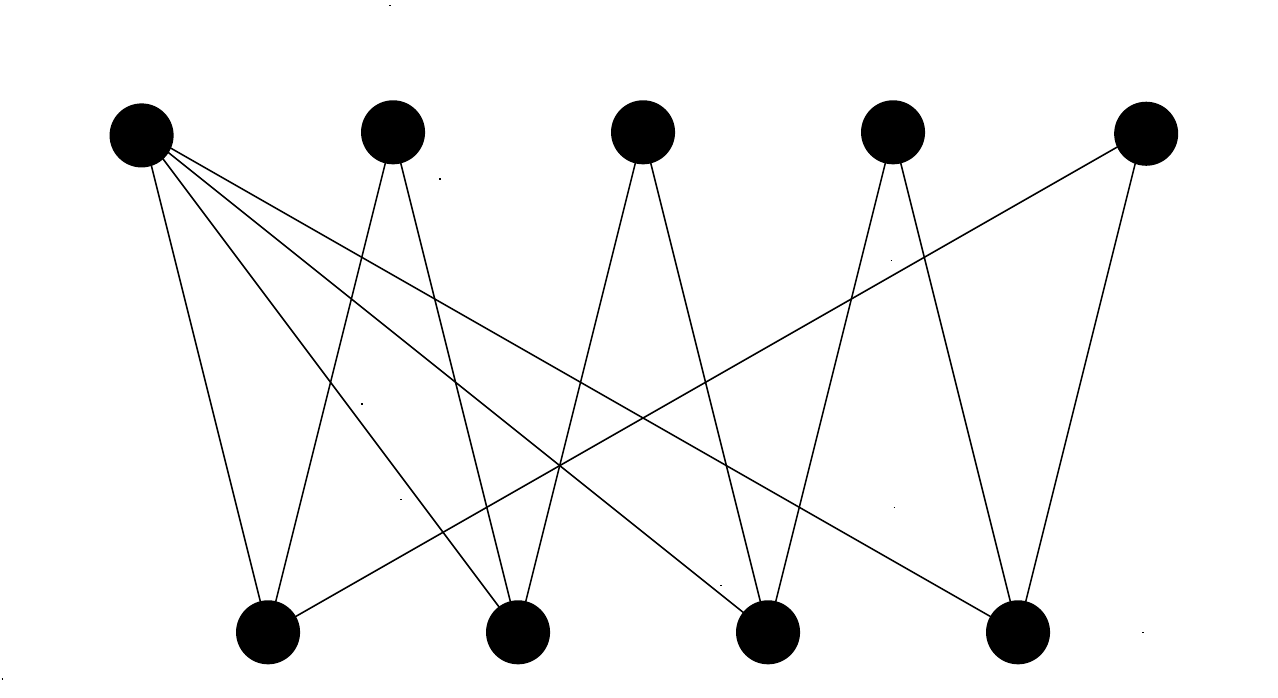}%
      \end{picture}%
      \setlength{\unitlength}{3947sp}%
      \begingroup\makeatletter\ifx\SetFigFont\undefined%
        \gdef\SetFigFont#1#2#3#4#5{%
        \reset@font\fontsize{#1}{#2pt}%
        \fontfamily{#3}\fontseries{#4}\fontshape{#5}%
        \selectfont}%
        \fi\endgroup%

      \begin{picture}(6129,3335)(2914,-3676)
        \put(2975,840){\makebox(0,0)[lb]{\smash{{\SetFigFont{12}{24.0}{\rmdefault}{\mddefault}{\updefault}{\color[rgb]{0,0,0}$1$}%
}}}}
        \put(3350,840){\makebox(0,0)[lb]{\smash{{\SetFigFont{12}{24.0}{\rmdefault}{\mddefault}{\updefault}{\color[rgb]{0,0,0}$2$}%
}}}}
        \put(3725,840){\makebox(0,0)[lb]{\smash{{\SetFigFont{12}{24.0}{\rmdefault}{\mddefault}{\updefault}{\color[rgb]{0,0,0}$3$}%
}}}}
        \put(4100,840){\makebox(0,0)[lb]{\smash{{\SetFigFont{12}{24.0}{\rmdefault}{\mddefault}{\updefault}{\color[rgb]{0,0,0}$4$}%
}}}}
        \put(4475,840){\makebox(0,0)[lb]{\smash{{\SetFigFont{12}{24.0}{\rmdefault}{\mddefault}{\updefault}{\color[rgb]{0,0,0}$5$}%
}}}}
	      \put(3150,-200){\makebox(0,0)[lb]{\smash{{\SetFigFont{12}{24.0}{\rmdefault}{\mddefault}{\updefault}{\color[rgb]{0,0,0}$\tilde{1}$}%
}}}}
	      \put(3525,-200){\makebox(0,0)[lb]{\smash{{\SetFigFont{12}{24.0}{\rmdefault}{\mddefault}{\updefault}{\color[rgb]{0,0,0}$\tilde{2}$}%
}}}}
	      \put(3900,-200){\makebox(0,0)[lb]{\smash{{\SetFigFont{12}{24.0}{\rmdefault}{\mddefault}{\updefault}{\color[rgb]{0,0,0}$\tilde{3}$}%
}}}}
	      \put(4275,-200){\makebox(0,0)[lb]{\smash{{\SetFigFont{12}{24.0}{\rmdefault}{\mddefault}{\updefault}{\color[rgb]{0,0,0}$\tilde{4}$}%
}}}}
      \end{picture}%

      ~\vskip-7.7cm\centerline{(3)} 	     
    \end{minipage}
  \end{tabular}
  \medskip
  \caption{Graphs $G$ with non-radical $L_G^\KK(3)$}
  \label{fig:nrad1}
\end{figure}

\begin{Example}
  \label{ex:nrad}
  We present three examples of graphs $G$ such that $L_G^\KK(3)$ is not 
  radical over any field $\KK$ of characteristic $0$. 
  The first example has $6$ vertices and $9$ edges and it is the smallest 
  example we have found (both in terms of edges and vertices). The second 
  example has $7$ vertices and $10$ edges and it is a complete intersection. 
  This shows that $L_G^\KK(3)$ can be a complete intersection without being 
  radical. The third example is bipartite, a subgraph of $K_{5,4}$, with $12$ 
  edges, and is the smallest bipartite example we have found. 
  In all cases, since the LSS-ideal $L_G^\KK(3)$ has integral coefficients,  
  we may assume that $\KK=\QQ$ and exhibit a witness $g$, i.e. a polynomial $g$ 
  such that $L_G^\KK(3):g\neq L_G^\KK(3):g^2$. The latter inequality can be 
  checked  with the help of CoCoA \cite{AB} or Macaulay 2 \cite{GS}.       

  \begin{itemize}
    \item[(1)] 
      Let $G$ be the  graph with $6$ vertices and $9$ edges depicted 
      in \ref{fig:nrad1}(1),  i.e. with edges 
      $$E=\{ \{1, 2\}, \{1, 3\}, \{1, 4\}, \{1, 5\}, \{2, 3\}, \{2, 4\}, 
             \{2, 6\}, \{3, 5\}, \{4, 6\}   \}.$$
 
      Here the witness can be chosen as follows. Denote by $Y=(y_{ij})$ 
      a generic $6\times 3$ matrix. As discussed in \ref{rem:LSSasDet} 
      the ideal $L^\QQ_G(3)$ is generated by the entries of $YY^{T}$ 
      corresponding to the positions in $E$. Now $g$ can be taken as the 
      $3$-minor of $Y$  with row indices $1,5,6$.
 
    \item[(2)] Let $G$ be the  graph with $7$ vertices and $10$ edges depicted
      in \ref{fig:nrad1}(2), i.e. with edges 
      $$E=\{ \{1, 2\}, \{1, 4\}, \{1, 5\}, \{2, 3\}, \{2, 7\}, \{3, 4\}, 
             \{3, 7\}, \{4, 5\}, \{5, 6\}, \{6, 7\}   \}.$$
      Here the witness can be chosen as follows. Denote by $Y=(y_{ij})$ 
      a generic $7\times 3$ matrix. Again as discussed in \ref{rem:LSSasDet} 
      the ideal $L^\QQ_G(3)$ is generated by the entries of $YY^{T}$ 
      corresponding to the positions in $E$. Now $g$ can be taken as the 
      $3$-minor of $Y$ with row indices $1,2,4$. The fact that $L^\QQ_G(3)$ 
      is a complete intersection can be checked quickly 
      with CoCoA \cite{AB} or Macaulay 2 \cite{GS}. 

    \item[(3)] Let $G$ be the subgraph of the complete bipartite graph 
      $K_{5,4}$ depicted in \ref{fig:nrad1}(3), i.e. with edges 
      $$E=\{ \{1,\tilde{1}\}, \{1,\tilde{2}\}, \{1,\tilde{3}\}, 
             \{1,\tilde{4}\}, \{2,\tilde{1}\}, \{2,\tilde{2}\}, 
             \{3,\tilde{2}\}, \{3,\tilde{3}\}, \{4,\tilde{3}\}, 
             \{4,\tilde{4}\}, \{5,\tilde{1}\}, \{5,\tilde{4}\} \}.
      $$
      Denote by $X=(x_{ij})$ a generic $5\times 3$ matrix and by $Y=(y_{ij})$ 
      a generic $3\times 4$ matrix. As explained in \ref{rem:LSSasDet} the 
      ideal $L^\QQ_G(3)$ is generated by the entries of $XY$ corresponding 
      to the positions in $E$.  Now the witness $g$ can be taken to be the 
      $3$-minor of $X$ corresponding to the column 
      indices $1,2,4$.
  \end{itemize}
\end{Example} 

\section{Stabilization of algebraic properties of $L_G^\KK(d)$}
\label{sec:stabilization}

In this section we prove \ref{thm:prime_and_ci} and state some of its 
consequences. We recall first   some facts on the symmetric algebra of a module stating 
 the results in the way that suit our needs best. 

Recall that, given a ring $R$ and an $R$-module $M$ presented as the cokernel 
of an $R$-linear map $$f:R^m  \to  R^n$$ 
the symmetric algebra $\Sym_R(M)$ 
of $M$ is (isomorphic to) the quotient of $\Sym_R(R^n)=R[x_1,\dots, x_n]$ 
by the ideal $J$ generated by the entries of $A\,(x_1,\dots,x_n)^T$ 
where $A$ is the $m\times n$ matrix representing $f$. Vice
versa every quotient of $R[x_1,\dots, x_n]$ by an ideal $J$ generated by 
homogeneous elements of degree $1$ in the $x_i$'s is the symmetric 
algebra of an $R$-module. 

Part (1) of the following is a special case of \cite[Prop. 3]{A} and 
part (2) a special case of  \cite[Thm 1.1]{Hu}.
Here and in the rest of the paper for a matrix $A$ with
entries in a ring $R$ and a number $t$  we denote by $I_t(A)$ the ideal of $R$ generated by
the $t$-minors of $A$. 

\begin{Theorem}
  \label{AHu}
  Let $R$ be a complete intersection. 
  Then 
  \begin{itemize}
    \item[(1)] $\Sym_R(M)$ is a complete intersection if and only if 
      $\height\,I_t(A) \geq m-t+1$ for all $t=1,\dots, m$. 
    \item[(2)] $\Sym_R(M)$ is a domain and $I_m(A)\neq 0$ if and only if 
      $R$ is a domain,  and $\height\,I_t(A)\geq m-t+2$ for all $t=1,\dots, m$. 
  \end{itemize}
  The equivalent conditions of (2) imply those of (1). 
\end{Theorem}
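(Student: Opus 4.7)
The plan is to analyze the presentation $\Sym_R(M) = T/J$, where $T = R[x_1,\ldots,x_n]$, $\ell_i = \sum_j A_{ij}x_j$ for $i = 1,\ldots,m$, and $J = (\ell_1,\ldots,\ell_m)$. Since $R$ is a complete intersection, so is $T$. Hence $\Sym_R(M)$ is a complete intersection over $R$ if and only if $\ell_1,\ldots,\ell_m$ form a regular sequence in $T$, equivalently $\height J = m$; and $\Sym_R(M)$ is a domain if and only if $J$ is a prime ideal of height $m$ in $T$.

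For part (1) the plan is to compare $\height J$ with the heights of the Fitting ideals $I_t(A)$ via a rank-filtration argument. Fix $t \in \{1, \ldots, m\}$ and let $\mathfrak{q}$ be a prime of $R$ minimal over $I_t(A)$; then the rank of $A$ over the residue field at $\mathfrak{q}$ is at most $t-1$, so modulo $\mathfrak{q}T$ the forms $\ell_1,\ldots,\ell_m$ span a submodule of generic rank at most $t-1$ in the free module on $x_1,\ldots,x_n$. Any minimal prime $P$ of $J$ containing $\mathfrak{q}T$ therefore satisfies
\[
\height P \;\leq\; \height \mathfrak{q}T + (t-1) \;=\; \height \mathfrak{q} + (t-1),
\]
using that $T$ is catenary. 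If $\Sym_R(M)$ is a CI then $\height P = m$ for such a $P$, forcing $\height I_t(A) \geq m-t+1$. Conversely, the same rank estimate, applied uniformly across all $t$, shows that if the height inequalities hold then every prime $P \supseteq J$ in $T$ satisfies $\height P \geq m$, so $\height J = m$ and the $\ell_i$'s form a regular sequence.

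For part (2), part (1) already guarantees that $J$ is a complete intersection, so the remaining content is the reducedness of $T/J$. The Jacobian of $(\ell_1,\ldots,\ell_m)$ with respect to $(x_1,\ldots,x_n)$ is precisely $A$, so, away from the singular locus of $R$, the singular locus of $T/J$ in $V(J)$ is cut out by the ideals $I_t(A)$; Serre's criterion then translates the strengthened bound $\height I_t(A) \geq m-t+2$ into generic reducedness ($R_0$) of $T/J$. Combined with $S_1$ coming from the CI property and with the connectedness of $\Spec T$ provided by $R$ being a domain, this yields that $T/J$ is a domain; the non-vanishing $I_m(A) \neq 0$ follows because $A$ must achieve rank $m$ at the generic point. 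For the converse, $R$ is a domain because it embeds in $\Sym_R(M)$, and running the argument backwards recovers the height conditions. The final sentence, that the conditions of (2) imply those of (1), is immediate since $m-t+2 > m-t+1$.

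The main obstacle is the catenarity and unmixedness bookkeeping needed to equate $\height J$ with an expression in the $\height I_t(A)$: the CI hypothesis on $R$ is used precisely to ensure that heights add cleanly under $T \twoheadrightarrow T/\mathfrak{q}T$ and that minimal primes of $J$ in $T$ are unmixed, so that a rank count at a single minimal prime of $I_t(A)$ propagates to a global height statement. Once this is in place, the remaining ingredients (Serre's criterion, Jacobian computation, and elementary rank considerations) are standard.
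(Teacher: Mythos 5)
The paper does not prove this statement: part~(1) is quoted from Avramov \cite[Prop.~3]{A} and part~(2) from Huneke \cite[Thm.~1.1]{Hu}, so your blind attempt is being measured against two genuinely nontrivial theorems, and as written it has real gaps. In part~(1) the central inequality you assert --- that \emph{any} minimal prime $P$ of $J$ containing $\mathfrak{q}T$ satisfies $\height P \le \height \mathfrak{q}+(t-1)$ --- is false. Take $R=\KK[u,v]$, $m=2$, $n=1$, $A=(u\ v)^T$, so $J=(ux_1,vx_1)\subset T=\KK[u,v][x_1]$; with $t=2$ one has $I_2(A)=0$, $\mathfrak{q}=(0)$, and $(u,v)$ is a minimal prime of $J$ of height $2>\height\mathfrak{q}+(t-1)=1$. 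What is true, and what the ``only if'' direction needs, is merely that \emph{some} prime containing $J+\mathfrak{q}T$ and contracting to $\mathfrak{q}$ has height at most $\height\mathfrak{q}+(t-1)$, obtained by passing to $\mathrm{Frac}(R/\mathfrak{q})[x]$, where the $\ell_i$ span a space of dimension at most $t-1$; note also that your deduction tacitly assumes that a minimal prime of $J$ containing $\mathfrak{q}T$ exists, which is not guaranteed. The ``if'' direction is not actually argued: there you need a \emph{lower} bound $\height P\ge m$ for every $P\supseteq J$, and the standard route is to set $\mathfrak{q}=P\cap R$, let $t$ be the rank of $A$ modulo $\mathfrak{q}$, use $\height\mathfrak{q}\ge\height I_{t+1}(A)\ge m-t$ together with the fact that $P/\mathfrak{q}T$ contracts to $(0)$ in $R/\mathfrak{q}$ (hence has height at least $t$), and add the two via the dimension formula in the Cohen--Macaulay ring $T$; ``the same rank estimate applied uniformly,'' being an upper bound, cannot deliver this. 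Finally, your opening equivalence ``$\Sym_R(M)$ is a CI iff $\height J=m$'' requires the presentation to be suitably minimal: a free module presented by a zero matrix gives a polynomial ring (a CI) while the height conditions fail, a point the cited theorems take care of and your write-up does not.

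In part~(2) the decisive error is the inference ``reduced ($R_0+S_1$) plus connected $\Rightarrow$ domain.'' Reducedness together with connectedness of the spectrum does not give irreducibility (consider $\KK[x,y]/(xy)$), and irreducibility of $\Spec\Sym_R(M)$ is exactly the hard content of Huneke's theorem; your sketch never produces it, so the ``if'' direction of (2) is not proved. Moreover, even the $R_0$ step is shaky: the Jacobian of $(\ell_1,\dots,\ell_m)$ in the $x$-variables only controls smoothness of $T/J$ \emph{over} $R$, so minimal primes of $J$ lying over the singular locus of $R$ (which for a complete intersection domain can have codimension one) are not covered, and Jacobian-type criteria need extra care in positive characteristic, over which the paper also invokes the theorem. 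The converse of (2) (``running the argument backwards'') is only a gesture, although it is in fact the more accessible half once one notes $\height J=m$ from $I_m(A)\neq 0$ and uses the corrected rank argument. In short, the rank-drop idea is the right starting point for the height comparisons, but the stated estimate is wrong, the converse of (1) is missing its key maneuver, and (2) lacks the irreducibility argument that is the heart of the cited result; the honest alternatives are to cite Avramov and Huneke, as the paper does, or to supply these arguments in full.
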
 

\begin{Remark} 
  \label{Arema1}
  Let $G=([n],E)$ be a graph. The ideal $L^\KK_G(d)
  \subseteq S= \KK[y_{i,j} :  i\in [n], \ \ j\in [d] ]$ is generated 
  by elements that have degree at most one in each block of variables.
  Hence $L^\KK_G(d)$ can be seen as an ideal defining a symmetric algebra 
  in various ways. 

  For example, set $G_1=G-n$, $U=\{ i\in [n-1] | \{i,n\}\in E\}$, $u=|U|$,  
  $S'=\KK[y_{i,j} :  i\in [n-1], \ \ j\in [d] ]$ and $R=S'/L_{G_1}^\KK(d)$. 
  Then $S/L_G^\KK(d)$ is the symmetric algebra of the cokernel of the 
  $R$-linear map $$R^u\to R^d$$ associated to the $u\times d$ matrix 
  $A=(y_{ij})$ with $i\in U$ and $j=1,\dots,d$.  
\end{Remark} 

\begin{Remark} 
  \label{Arema2}
  In order to apply \ref{AHu} to the case described in \ref{Arema1} it is 
  important to observe that for every $G$ no minors of the matrix 
  $(y_{ij})_{(i,j) \in [n] \times [d]}$ vanish modulo $L_G^\KK(d)$. 
  This is because $L_G^\KK(d)$ is contained in the ideal $J$ generated by 
  the monomials $y_{ik}y_{jk}$ and the terms in the minors of $(y_{ij})$ do 
  not belong to $J$ for obvious reasons. 
\end{Remark}

\begin{Proposition}
\label{obsKab} 
 Let $G=([n],E)$ be a graph. If $L_G^\KK(d)$ is prime then  $G$ does not contain $K_{a,b}$ with  $a+b > d$.
 \end{Proposition}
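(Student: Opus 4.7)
The plan is to prove the contrapositive: assuming $G$ contains $K_{a,b}$ with $a+b>d$, I will exhibit an element $\det(Y_AY_A^T)\cdot\det(Y_BY_B^T)$ that belongs to $L_G^\KK(d)$ but whose factors do not. Since $K_{a,b}$ contains $K_{a',b'}$ for all $a'\le a$, $b'\le b$, I may reduce to the extremal case $a+b=d+1$, whence automatically $1\le a,b\le d$. Denote the two vertex classes of the chosen $K_{a,b}$ subgraph by $A$ and $B$.

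The membership in the ideal is a consequence of rank deficiency. Write $Y_A=(y_{i\ell})_{i\in A,\ell\in[d]}$, $Y_B=(y_{j\ell})_{j\in B,\ell\in[d]}$ and let $W$ be the $(d+1)\times d$ matrix obtained by stacking $Y_A$ over $Y_B$. The Cauchy--Binet formula forces $\det(WW^T)=0$ in $S$, since $W$ has only $d$ columns. By Remark~\ref{rem:LSSasDet} the entries of the off-diagonal blocks of
$$WW^T=\begin{pmatrix}Y_AY_A^T & Y_AY_B^T\\ Y_BY_A^T & Y_BY_B^T\end{pmatrix}$$
are exactly the defining generators of $L^\KK_{K_{a,b}}(d)\subseteq L_G^\KK(d)$, so reduction modulo $L_G^\KK(d)$ makes $WW^T$ block diagonal. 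The identity $\det(WW^T)=0$ then specializes to $\det(Y_AY_A^T)\cdot\det(Y_BY_B^T)\in L_G^\KK(d)$.

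To see that neither factor is already in the ideal, I would exhibit a single $\KK$-point of $V(L_G^\KK(d))$ at which $\det(Y_AY_A^T)$ does not vanish. After relabeling, take $A=\{1,\dots,a\}$ and set $y_{i\ell}=\delta_{i\ell}$ for $i\in A$, $\ell\in[d]$ (possible since $a\le d$), together with $y_{v\ell}=0$ for every vertex $v\notin A$ and every $\ell\in[d]$. A case-split on whether an edge $e$ of $G$ is contained in $A$, crosses into the complement of $A$, or lies outside $A$ entirely, shows that every generator $f_e^{(d)}$ vanishes at this point. Meanwhile $Y_A$ becomes $(I_a\mid 0)$, giving $\det(Y_AY_A^T)=1$. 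Hence $\det(Y_AY_A^T)\notin L_G^\KK(d)$; the symmetric construction (using $b\le d$) yields $\det(Y_BY_B^T)\notin L_G^\KK(d)$.

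The only genuinely new ingredient is the matrix identity $\det(WW^T)=0$. The main subtlety lies in producing the evaluation points in a way that ignores any additional edges $G$ may have inside $A$ or $B$, and the automatic inequalities $a,b\le d$ coming from the reduction to $a+b=d+1$ are precisely what make those evaluation points available.
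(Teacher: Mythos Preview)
Your argument is correct and takes a genuinely different route from the paper's. Both proofs reduce to the case $a+b=d+1$ and exploit the same underlying rank obstruction, but they package it differently. The paper works over the field of fractions of $R=S/L_G^\KK(d)$: from $Y_AY_B^T=0$ in the domain $R$ it gets $\rank(Y_A)+\rank(Y_B)\le d$, hence $I_a(Y_A)=0$ or $I_b(Y_B)=0$ in $R$, and then invokes \ref{Arema2} (a multidegree argument) to see that no such minor can lie in $L_G^\KK(d)$. You instead produce the zerodivisors explicitly via the Gram determinant identity $\det(WW^T)=0$ and the block-diagonal reduction, and you verify nonmembership of the factors by an explicit $\KK$-point rather than by appealing to a lemma. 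Your approach is more self-contained and elementary---no field of fractions, no auxiliary remark---while the paper's rank argument is conceptually cleaner and ties directly into the height conditions used later in the symmetric-algebra machinery of \ref{AHu}. Both are characteristic-free.
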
 
 \begin{proof}  Suppose by  contradiction that $L_G^\KK(d)$ is prime and 
  $G$ contains $K_{a,b}$ for some 
  $a+b> d$. We may decrease either $a$ or $b$ or both and 
  assume right away that $a+b=d+1$ with $a,b \geq 1$. 
  In particular $a,b \leq d$ and $a+b\leq n$.
  We may assume that  $K_{a,b}$ is a subgraph
  of $G$ with edges $\{i,a+j\}$ for $i \in [a]$ and
  $j \in [b]$. 
  Set $R = S/L_{G}^\KK(d)$ and $Y = (y_{i\,\ell}) \in R^{a\times d}$ and 
  $Z = (z_{\ell,i}) \in R^{d\times b}$ with $z_{\ell,i}=y_{i+a,\ell}$. 
  Since $K_{a,b}$ is a subgraph of $G$ we 
  have $YZ=0$ in $R$. 
By assumption $R$ is a domain and $YZ = 0$ can be seen as a 
  matrix identity over the field   of fractions of $R$.
  Hence 
  $$\rank(Y)+\rank(Z) \leq d.$$
  From $a+b = d+1$ it follows that $\rank(Y)<a$ or $\rank(Z)<b$.  
  This implies that   $I_a(Y)=0$ or $I_b(Z)=0$ as ideals of $R$.
  But by \ref{Arema2} none of the minors of $Y$ and $Z$ 
  are in  $L_G^\KK(d)$. This is a contradiction
  and hence $L_G^\KK(d)$ is not prime. 
  \end{proof}

\begin{Lemma} 
 \label{detcol} 
  Let $A$ be an $m\times n$ matrix with entries in a Noetherian ring $R$. 
  Assume $m\leq n$. Let $S=R[x]=R[x_1,\dots, x_m]$ be a polynomial ring over 
  $R$ and let $B$ be the $m\times (n+1)$ matrix with entries in $S$ 
  obtained by adding the column $(x_1,\dots,x_m)^T$ to $A$. 
  Then we have $\height\,I_1(B)=\height\,I_1(A)+m$ and 
  $$\height\,I_t(B)\geq \min\{ \height\, I_{t-1}(A) ,  
    \height\,I_{t}(A)+m-t+1\}$$ 
  for all $1<t\leq m$.  
\end{Lemma}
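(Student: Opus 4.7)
The plan is to analyze minimal primes of $I_t(B)$ in $S$, exploiting the flatness of $R\hookrightarrow S$. For any prime $\mathfrak{q}$ of $S$ with $\mathfrak{p}=\mathfrak{q}\cap R$, flatness of the polynomial extension gives the height formula
\[
\height\mathfrak{q}=\height\mathfrak{p}+\height(\mathfrak{q}/\mathfrak{p}S),
\]
and $S/\mathfrak{p}S=(R/\mathfrak{p})[x_1,\dots,x_m]$ is a polynomial ring over a domain.

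For the first assertion, I would observe that $I_1(B)=I_1(A)S+(x_1,\dots,x_m)$. Any minimal prime $\mathfrak{q}$ over $I_1(B)$ must satisfy $\mathfrak{q}\cap R\supseteq I_1(A)$ and $\mathfrak{q}/\mathfrak{p}S\supseteq(x_1,\dots,x_m)$; since the latter is already prime in $(R/\mathfrak{p})[x_1,\dots,x_m]$ of height $m$, we must have $\mathfrak{q}/\mathfrak{p}S=(x_1,\dots,x_m)$. Minimizing over primes $\mathfrak{p}$ minimal over $I_1(A)$ yields $\height I_1(B)=\height I_1(A)+m$.

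For the main inequality, let $\mathfrak{q}$ be a prime minimal over $I_t(B)$ with $\height\mathfrak{q}=\height I_t(B)$ and set $\mathfrak{p}=\mathfrak{q}\cap R$. Since $I_t(A)\subseteq I_t(B)\cap R\subseteq\mathfrak{p}$, the argument splits as follows. \textbf{Case 1:} if $I_{t-1}(A)\subseteq\mathfrak{p}$, then $\height\mathfrak{q}\geq\height\mathfrak{p}\geq\height I_{t-1}(A)$ and we are done. \textbf{Case 2:} otherwise, pick a $(t-1)$-minor $M=\det A[R',C']$ not in $\mathfrak{p}$, with $|R'|=|C'|=t-1$. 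For each $i\in[m]\setminus R'$, Laplace expansion along the last column of the $t$-minor of $B$ with rows $R'\cup\{i\}$ and columns $C'\cup\{n+1\}$ produces a relation
\[
\pm x_i\,M+\sum_{j\in R'}\pm x_j\,N_{ij}\in I_t(B)\subseteq\mathfrak{q},
\]
where the $N_{ij}$ are $(t-1)$-minors of $A$. Passing to $(S/\mathfrak{p}S)_{\bar M}$ — legitimate because $M\notin\mathfrak{q}$, hence $\bar M\notin\mathfrak{q}/\mathfrak{p}S$ — the element $\bar M$ becomes a unit and the relations give $m-t+1$ elements of the image of $\mathfrak{q}$ of the form $x_i-\sum_{j\in R'}\bar c_{ij}x_j$ with $\bar c_{ij}\in (R/\mathfrak{p})_{\bar M}$. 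The triangular substitution $y_i:=x_i-\sum_{j\in R'}\bar c_{ij}x_j$ for $i\notin R'$ is an automorphism of $(R/\mathfrak{p})_{\bar M}[x_1,\dots,x_m]$ that turns these $m-t+1$ elements into indeterminates; they therefore generate an ideal of height $m-t+1$. Hence $\height(\mathfrak{q}/\mathfrak{p}S)\geq m-t+1$, and the flatness formula yields
\[
\height\mathfrak{q}\geq\height\mathfrak{p}+(m-t+1)\geq\height I_t(A)+m-t+1.
\]
Combining the two cases gives the claimed lower bound.

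The main obstacle is the clean execution of Case 2: one has to keep track of the two simultaneous operations — reduction modulo $\mathfrak{p}$ and localization at $\bar M$ — and verify that the height of the image of $\mathfrak{q}/\mathfrak{p}S$ is not disturbed (immediate from $M\notin\mathfrak{q}$), and that the Laplace-expansion elements, after the triangular change of variables, really do cut out a height-$(m-t+1)$ ideal. All the remaining steps are standard manipulations of heights under flat polynomial extensions.
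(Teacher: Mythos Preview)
Your argument is correct and follows essentially the same route as the paper: both split on whether $I_{t-1}(A)$ lies in the chosen prime, and in the nontrivial case both invert a surviving $(t-1)$-minor and use Laplace expansion along the new column to produce $m-t+1$ elements of the form $x_i-(\text{linear in the other }x_j)$ that behave like indeterminates. The only cosmetic difference is that you package the final height count via the flatness formula $\height\mathfrak q=\height\mathfrak p+\height(\mathfrak q/\mathfrak p S)$ after reducing modulo $\mathfrak p$, whereas the paper localizes $S$ directly at the minor $F$ and argues that $PR_F[x]$ contains both $I_t(A)R_F[x]$ and the $m-t+1$ shifted variables.
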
 

\begin{proof} 
  Set $u= \min\{ \height\,I_{t-1}(A) , \height\,I_{t}(A)+m-t+1\}$. 
  Let $P$ be a prime ideal of 
  $S$ containing $I_t(B)$.  We have to prove that $\height\,P\geq u$. 
  If $P\supseteq I_{t-1}(A)$ then $\height\, P\geq \height\, I_{t-1}(A)\geq u$. 
  If $P\not\supseteq I_{t-1}(A)$ then we may assume that the $(t-1)$-minor
  $F$ corresponding to the first $(t-1)$ rows and columns of $A$ is not in $P$.
  Hence, $\height\, P=\height\, PR_F[x]$ and $PR_F[x]$ contains 
  $I_t(A)R_F[x]$ and $( x_j-F^{-1}G_j : j=t,\dots,m)$ with 
  $G_j\in R[x_1,\dots, x_{t-1}]$.  Since the elements 
  $x_j-F^{-1}G_j$ are algebraically 
  independent over $R_F$ we have  
  $$\height\, PR_F[x]\geq \height\, I_t(A)R_F+(m-t+1)\geq 
     \height\,I_t(A)+(m-t+1).$$
\end{proof}

Now we are in position to prove \ref{thm:prime_and_ci}: 

\begin{proof}[Proof of \ref{thm:prime_and_ci}] 
  To prove (1) we argue by induction on $n$.  The induction base $n\leq 2$ is obvious. 
  Assume $n > 2$. We use the notation from \ref{Arema1} and set 
  $S=\KK[y_{ij} : i\in [n], \ j\in [d] ]$,
  $S'=\KK[y_{i,j} :  i\in [n-1], \ \ j\in [d] ]$,
  $G_1 = G - n$,
  $U=\{ i\in [n-1] | \{i,n\}\in E\}$ and $u=|U|$.  
  Note, that $S'/L_{G_1}^\KK(d)$ is an algebra retract of $S/L_{G}^\KK(d)$. 
  Therefore $L_{G_1}^\KK(d)=
  L_{G}^\KK(d)\cap S'$ and so $L_{G_1}^\KK(d)$ is prime. 
  By induction, it follows that $L_{G_1}^\KK(d)$ is a complete intersection. 
  Since $u$ is the degree of the vertex $n$ in $G$  we have that  
  $K_{1,u}\subset G$. Since $L_{G}^\KK(d)$ is prime  \ref{obsKab} implies $1+u<d+1$, 
  i.e. $u<d$. By virtue of \ref{Arema2} we have that the minors of the 
  matrix $A$ are non-zero in $S'/L_{G_1}^\KK(d)$. In particular, 
  $I_u(A)\neq 0$ in $S'/L_{G_1}^\KK(d)$ and hence (2) in \ref{AHu} hold. 
  Then (1) in \ref{AHu} holds as well, i.e. $L_{G}^\KK(d)$ is a complete 
  intersection.

  To prove (2) we again argue by induction on $n$. For $n \leq 2$ the assertion is obvious. 
  Assume $n>2$. 
  We again use the notation $G_1=G-n$, $U=\{ i \in [n] | \{i,n\}\in E \}$,  $u=|U|$.
  In addition we set	
  $Y=( y_{ij})_{(i,j) \in U \times [d+1]}$, 
  $S=\KK[y_{ij} : i\in [n], \ j\in [d+1] ]$,
  $S'=\KK[y_{ij} : i\in [n-1], \ j\in [d+1] ]$ and $R=S'/L_{G_1}^\KK(d+1)$. 
  By construction, $S/L_G^\KK(d+1)$ is the symmetric algebra of the $R$-module 
  presented as the cokernel of the map $R^u\to R^{d+1}$ associated to $Y$. 
  
  By assumption, $L_G^\KK(d)$ is a complete intersection and hence 
  $L_{G_1}^\KK(d)$ is a complete intersection as well. It then follows by 
  induction that $L_{G_1}^\KK(d+1)$ is prime and hence $R$ is a domain.   
  Since the  polynomials  $f_{\{i,n\}}^{(d)}$ with $i\in U$ are a regular sequence contained 
  in the ideal $( y_{nj} : 1\leq j\leq d)$   we have $u\leq d$ and by \ref{Arema2} $I_u(Y)\neq 0$ 
  in $R$. Therefore, by \ref{AHu}(2) we have 
  $$L_{G}^\KK(d+1) \mbox{ is prime } \Leftrightarrow \height\, I_t(Y) 
    \geq u-t+2 \mbox{ in } R  \mbox{ for every } t=1,\dots, u.$$  

  Equivalently, we have to prove that 

  $$\height \Big( I_t(Y)+L_{G_1}^\KK(d+1) \Big) \geq u-t+2+g  \mbox{ in } S' 
    \mbox{ for every } t=1,\dots, u$$    
  where $g=\height\,L_{G_1}^\KK(d+1)=|E|-u$. 
 
  Consider the weight vector $\ww \in \RR^{n \times (d+1)}$ 
  defined by $\ww_{ij}=1$ and $\ww_{i\,d+1}=0$ 
  for all $j \in [d]$ and $i \in [n]$. 
  By construction the initial forms of the standard
  generators of $\ini_\ww(L_{G_1}^\KK(d+1))$ are the 
  standard generators of $L_{G_1}^\KK(d)$.  Since the standard generators of $I_t(Y)$ coincide with their initial  forms with respect to $\ww$ it follows that 
  $\ini_\ww(I_t(Y)) \supseteq I_t(Y)$ (indeed equality holds but we do not 
  need this fact). 

  Therefore, 
  $\ini_\ww( I_t(Y)+L_{G_1}^\KK(d+1))\supseteq I_t(Y)+L_{G_1}^\KK(d)$ 
  and it is enough to prove that

  $$\height \Big(I_t(Y)+L_{G_1}^\KK(d)\Big) \geq u-t+2+g  
    \mbox{ in } S' \mbox{ for every } t=1,\dots, u$$

  or, equivalently,   

  $$\height\, I_t(Y) \geq u-t+2 \mbox{ in } R' \mbox{ for every } t=1,\dots, u$$
  where $R'=S'/L_{G_1}^\KK(d)$. 

  The variables $y_{1\,d+1}, \dots, y_{n-1\,d+1}$ do not appear in the 
  generators of $L_{G_1}^\KK(d)$. 
  Hence $R'=R''[y_{1\,d+1}, \dots, 
  y_{n-1\,d+1}]$ with 
  $R''=\KK[y_{ij} : (i,j) \in [n-1] \times [d]]/L_{G_1}^\KK(d)$. 
  Let $Y'$ be the matrix $Y$ with the $(d+1)$-st column removed. 
  Then $S/L_G^\KK(d)$ can be regarded 
  as the symmetric  algebra of the $R''$-module presented as the cokernel of the map 
  \begin{eqnarray}
    \label{eq:co2}
       (R'')^u & \xrightarrow{Y'} & (R'')^{d}.
  \end{eqnarray}
  
  By assumption $S/L_G^\KK(d)$ is a 
  complete intersection.
  Hence by \ref{AHu}(1)  we know  
  $$\height\,I_t(Y') \geq u-t+1 \mbox{ in } R'' \mbox{ for every } 
    t=1,\dots, u$$

  Since $Y$ is obtained from $Y'$ by adding a column of variables over 
  $R''$ by \ref{detcol} we have: 

  $$\height\, I_t(Y) \geq \min\{  \height\,I_{t-1}(Y') , 
    \height \,   I_{t}(Y')+u-t+1 \} \geq u-t+2$$ 
  in $R'$ and for all $t=1,\dots,u$. 
\end{proof} 

Now we prove \ref{cor:prime_and_ci}: 

\begin{proof}[Proof of \ref{cor:prime_and_ci}]
Assertion (1) in \ref{cor:prime_and_ci} is a formal consequence of   \ref{thm:prime_and_ci}.  
Assertion (2) is obvious for complete intersections.  
Finally assume that $L^\KK_G(d)$ is prime. Then by \ref{thm:prime_and_ci} $L^\KK_G(d)$ is a complete intersection.   
The statement now follows from a general fact:  if a regular sequence generates a prime ideal in a standard 
graded algebra or in a local ring  then so does every subset of the sequence.  
\end{proof}

\section{Positive matching decompositions}
\label{sec:pmd}

In this section we introduce  positive matching decompositions and prove \ref{thm:pmd}.
  
\begin{Definition} 
  \label{def:posmat}
  Given a hypergraph $H=(V,E)$  
  a positive matching of $H$ is a subset 
  $M\subset E$ of pairwise disjoint sets (i.e., a matching) such that there 
  exists a weight function $w:V\to \RR$ satisfying:
  \begin{eqnarray}
    \label{eq:posmat}
    \begin{aligned} 
      \sum_{i \in A} w(i)>0 & \mbox{~if~} & A \in M \\
      \sum_{i \in A} w(i)<0 & \mbox{~if~} & A \in E\setminus M. 
    \end{aligned} 
  \end{eqnarray}
\end{Definition}
 
The next lemma summarizes some elementary properties of positive matchings.

\begin{Lemma}
  \label{lem:posmat}
  Let $H = (V,E)$ be a hypergraph such that $E$ is a clutter, 
  $M \subseteq E$ and $V_M = \bigcup_{A \in M} A$.
  \begin{itemize}
    \item[(1)] $M$ is a positive matching for $H$ if and
      only if $M$ is a positive matching for the induced hypergraph
      $(V_M, \{ A\in E~|~ A \subseteq V_M\})$.
    \item[(2)] 
      Assume $M$ is a positive matching on $H$ and 
      $A \in E$ is such that $M_1=M \cup \{A\}$ is a matching. Assume also there is a vertex $a\in A$ such that 
      $$\{ B \in E : \  B\subset V_{M_1} \mbox{ and }   a \in B \}=\{A\}.$$
      Then $M\cup\{A\}$   is a positive matching of $H$. 

    \item[(3)] If $H$ is a bipartite graph with bipartition 
      $V = V_1 \cup V_2$ then 
      $M$ is a positive matching if and only if $M$ is a matching and
      directing the edges $e \in E$ from $V_1$ to $V_2$ if $e \in M$ and
      from $V_2$ to $V_1$ if $e \in E \setminus M$ yields an acyclic 
      orientation.
  \end{itemize}
\end{Lemma}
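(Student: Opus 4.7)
The plan is to prove the three parts in order, using (1) as a reduction step for (2), and for (3) to translate the question of finding witness weights into that of finding a vertex potential which strictly decreases along every directed edge.

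For (1) the forward direction is immediate: the restriction to $V_M$ of any witness $w\colon V\to\RR$ for $M$ being a positive matching of $H$ satisfies all required inequalities for the induced hypergraph. For the converse, I would take a witness $w'\colon V_M\to\RR$ and extend it to $V$ by setting $w(v)=w'(v)$ on $V_M$ and $w(v)=-N$ on $V\setminus V_M$ for $N>0$ large; edges $A\in M$ lie in $V_M$ so their positive sums are preserved, edges $A\in E\setminus M$ with $A\subseteq V_M$ also inherit their negative sums from $w'$, and any remaining $A\in E\setminus M$ meets $V\setminus V_M$ and hence has sum bounded above by a constant depending only on $w'$ minus $N$, which becomes negative once $N$ is large enough. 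For (2), I would first apply (1) to pass to the hypergraph induced on $V_{M_1}$, so that the hypothesis reads: $A$ is the unique edge of $E$ containing $a$. Assume $A\notin M$ (otherwise there is nothing to prove), so that $B\cap A=\emptyset$ for every $B\in M$ and in particular $a\notin B$. Starting from a witness $w$ for $M$, let $w_t$ agree with $w$ except at $a$, where $w_t(a)=w(a)+t$. For $B\in M$ or $B\in E\setminus M_1$ the vertex $a$ does not belong to $B$, so $\sum_{i\in B}w_t(i)=\sum_{i\in B}w(i)$ keeps the correct sign, while $\sum_{i\in A}w_t(i)=\sum_{i\in A}w(i)+t$ becomes positive for $t$ sufficiently large.

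For (3) in the forward direction, suppose the orientation has a directed cycle. Bipartiteness forces its length to be even, say $2k$, and we may write the cycle as $v_1\to v_2\to\cdots\to v_{2k}\to v_1$. Because the orientation sends $M$-edges from $V_1$ to $V_2$ and the other edges from $V_2$ to $V_1$, the labels alternate between $M$ and $E\setminus M$ along the cycle. Consider the alternating sum
\[
\sum_{j=1}^{2k}(-1)^{j+1}\bigl(w(v_j)+w(v_{j+1})\bigr),
\]
with indices modulo $2k$. Each summand is strictly positive, being either a positive $M$-inequality taken with a $+$ sign or a negative non-$M$-inequality taken with a $-$ sign. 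On the other hand every $w(v_i)$ appears in exactly two consecutive terms with opposite signs, so the sum collapses to $0$, a contradiction. For the converse, given an acyclic orientation, make the change of variables $w'(v)=w(v)$ if $v\in V_1$ and $w'(v)=-w(v)$ if $v\in V_2$; both families of required inequalities, $\sum_{i\in e}w(i)>0$ for $e\in M$ oriented $V_1\to V_2$ and $\sum_{i\in e}w(i)<0$ for $e\in E\setminus M$ oriented $V_2\to V_1$, rewrite uniformly as $w'(u)>w'(v)$ for every directed edge $u\to v$. Such a potential $w'$ exists because the orientation is acyclic: for instance set $w'(v)$ to be the length of a longest directed path starting at $v$, then recover $w$ from $w'$ via the same change of variables.

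The main subtlety is in part (3): one must verify both that bipartiteness forces the cycle length to be even and that the $M$-versus-non-$M$ labels alternate around the cycle, so that the alternating sum of witness inequalities is coherent in sign. Once this is in place, the telescoping and the topological-sort potential are essentially automatic, and parts (1) and (2) reduce to careful but routine bookkeeping with additive weights.
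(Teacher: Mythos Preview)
Your proofs of (1) and (2) match the paper's almost verbatim: restrict/extend by a large negative constant for (1), reduce via (1) to $V_{M_1}$ and then push up the weight at the distinguished vertex $a$ for (2).

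For (3) the arguments diverge. The paper performs the same sign-flip on $V_2$ that you do, rewriting all the inequalities as $w'(i)>w'(j)$ along directed edges, but then packages the remaining equivalence as a statement about regions of the graphic hyperplane arrangement $\{w(i)=w(j):\{i,j\}\in E\}$ and cites the Greene--Zaslavsky correspondence between such regions and acyclic orientations. You instead argue directly: the telescoping alternating-sum contradiction for the forward direction, and the longest-path potential for the converse. Your route is correct and strictly more self-contained; the paper's route is shorter on the page but imports a nontrivial external result. One small remark: in your telescoping argument you implicitly assume the cycle starts at a vertex of $V_1$ so that the $(-1)^{j+1}$ signs line up with the $M$/non-$M$ alternation; this is harmless (relabel the cycle), but worth saying explicitly.
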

\begin{proof}
  \begin{itemize}
     \item[(1)] Set $H_1=(V_M, \{ A\in E~|~ A \subseteq V_M\})$. Clearly a weight function on $V$ for which $M$ is a positive
	     matching restricts to $V_M$ making $M$ a positive matching of $H_1$.  
          Conversely, assume we are given a weight function $w$ on $V_M$ that makes $M$ a positive matching. Then we extends $w$ to $V$ by 
		  assigning to the vertices in $V \setminus V_M$ a weight sufficiently negative to induce a negative weight on the 
		  elements of $E$ which contain at least one element from $V \setminus V_M$. For example, one can set 
          $w(i)=-|V| \max \{w(j)~:~j \in V_M\}$ for every  $i \in V \setminus V_M$. Such an extension makes $M$ a positive matching for  $H$.
           
 \item[(2)] 
  Let $w$ be a weight that makes $M$ a positive matching of $H$. 
  In view of (1), it is enough to prove that there is a weight $v$ defined on $V_{M_1}$ making $M_1$ a positive matching for the 
  restriction of $H$ to $V_{M_1}$.  We set $v(i)=w(i)$ if $i\in V_{M_1}$ and $i\neq a$ and we give 
		  $v(a)$ a high enough value to have $v(A) > 0$, i.e. $v(a)> -\sum_{i\in A \ i\neq a} w(i)$.   Since there are no  
  elements in $E$ other than $A$ that are contained in $V_{M_1}$ and contain $a$ the resulting weight $v$ has the desired properties.
 
     \item[(3)] 
       We change the coordinates $w(i)$ to $-w(i)$ for $i \in V_2$ 
       in the inequalities defining a positive matchings. 
       As a simple reformulation of \ref{eq:posmat} we get that 
       in these coordinates a matching $M$ is 
       positive if and only if there is a weight function such that
       for $\{i,j\} \in E$, $i \in V_1$, $j \in V_2$ we have 
       \begin{eqnarray}
          \label{eq:bip}
          \begin{aligned}
          w(i) > w(j) & \mbox{~if~} & \{i,j\} \in M, \\
          w(i) < w(j) & \mbox{~if~} & \{i.j\} \in E \setminus M.
          \end{aligned}
       \end{eqnarray}
       This is equivalent to the existence of a region in the 
       arrangement of hyperplanes $w(i) = w(j)$ for $\{i,j \} \in E$
       in $\RR^V$ satisfying \eqref{eq:bip}. 
       But it is well known that the regions in this arrangement are in one
       to one correspondence with the acyclic orientations of $G$
       (see \cite[Lemma 7.1]{GZ}).
  \end{itemize}
\end{proof}

Now we are in position to introduce the key concept of this section.

\begin{Definition} 
  Let $H=(V,E)$ be a hypergraph for which $E$ is a clutter. 
  A positive matching decomposition 
  (or $\posMa$-decomposition) of $G$ is a partition 
  $E=\bigcup_{i=1}^p E_i$ into pairwise disjoint subsets such that 
  $E_i$ is a positive matching 
  on $(V, E\setminus \cup_{j=1}^{i-1} E_j)$ for $i=1,\dots, p$. 
  The $E_i$ are called the parts of the $\posMa$-decomposition.
  The smallest $p$ for which $G$ admits a $\posMa$-decomposition  
  with $p$ parts will be denoted by $\pmd(H)$.
\end{Definition}
  
Note that   one has $\pmd(H)\leq |E|$ because of the obvious 
$\posMa$-decomposition $\bigcup_{A\in E} \{A\}$. On the other hand $\pmd(G)$ 
is smaller than $|E|$ for most clutters. For graphs we have: 

\begin{Lemma}
\label{pmd}
Let $G=([n],E)$ be a graph. Then: 
\begin{itemize}
\item[(1)]  $\pmd(G)\leq \min(2n-3, |E|)$. 
\item[(2)] If $G$ is bipartite then $\pmd(G)\leq \min(n-1, |E|)$.  
\item[(3)]  $\pmd(G)\geq \Delta(G)$ with equality if $G$ is a forest.  
\end{itemize}  
\end{Lemma}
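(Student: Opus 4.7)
First, the lower bound $\pmd(G)\ge\Delta(G)$ in (3) is immediate from the definition: each part of a positive matching decomposition is in particular a matching, so no part can contain more than one edge incident to a given vertex, and hence one needs at least $\deg(v)$ parts to cover the edges at any vertex $v$.

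For the upper bounds in (1) and (2) the plan is to exhibit explicit decompositions. Label $V=\{1,\dots,n\}$ and, for each $k\in\{1,\dots,2n-3\}$, put every edge $\{i,j\}$ with $i<j$ and $i+j=k+2$ into the class $E_k$. A short case analysis on which endpoints could coincide shows that each $E_k$ is a matching. To check that $E_k$ is a positive matching on the remaining graph $\bigl(V,\bigcup_{\ell\ge k}E_\ell\bigr)$, I would simply assign $w(i)=\tfrac{k+2}{2}+\tfrac14-i$: then $w(i)+w(j)=\tfrac{2k+5}{2}-(i+j)$ equals $\tfrac12>0$ on $E_k$-edges (where $i+j=k+2$) and is negative on all other remaining edges (which have $i+j\ge k+3$). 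Combined with the trivial $\pmd(G)\le|E|$ this yields (1). For (2) one adapts the same idea: write the bipartition as $V_1=\{u_1,\dots,u_p\}$, $V_2=\{v_1,\dots,v_q\}$, and for $k=1,\dots,p+q-1=n-1$ set $E_k=\{\{u_i,v_j\}\in E:i+j=k+1\}$. Each class is again a matching, and by \ref{lem:posmat}(3) it is positive on the remaining graph because the orientation sending $E_k$-edges from $V_1$ to $V_2$ and later ones from $V_2$ to $V_1$ is acyclic: along any alternating directed path the $V_1$-index strictly increases (the sum $i+j$ is fixed at $k+1$ on $E_k$-edges but strictly larger on later ones, forcing $i$ up when $j$ drops).

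To close the forest case of (3), let $G$ be a forest. Since forests are bipartite, K\"onig's theorem supplies a proper edge-coloring with $\Delta(G)$ colors, i.e.\ a partition of $E$ into $\Delta(G)$ matchings $E_1,\dots,E_{\Delta(G)}$ in any chosen order. For each $k$ the graph $\bigl(V,\bigcup_{\ell\ge k}E_\ell\bigr)$ is still a forest and thus has no cycle at all, so the orientation associated to $E_k$ via \ref{lem:posmat}(3) is vacuously acyclic and $E_k$ is a positive matching of the remaining graph. Hence $\pmd(G)\le\Delta(G)$, and together with the lower bound this gives equality. The only mildly delicate point throughout is keeping careful track of which edges remain present when verifying positivity; once that bookkeeping is in place, the explicit linear weights in (1) and (2) and the vacuous acyclicity in the forest case finish the job.
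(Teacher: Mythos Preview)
Your proposal is correct. Part (2) is essentially the paper's argument, but in (1) and (3) you take different routes.

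For (1) the paper uses the same decomposition $E_k=\{\{i,j\}:i+j=k+2\}$, but it verifies positivity by building each $E_k$ one edge at a time via \ref{lem:posmat}(2). Your explicit weight $w(i)=\tfrac{k+2}{2}+\tfrac14-i$ is cleaner: it gives a direct one-line check that $w(i)+w(j)=\tfrac12$ on $E_k$ and $\le-\tfrac12$ on later edges, avoiding any iteration.

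For (3) the paper argues by induction on the number of vertices, peeling off a leaf and distinguishing whether $\Delta$ drops or not. Your argument is different and shorter: K\"onig's edge-coloring theorem gives $\Delta(G)$ matchings outright, and since every subgraph of a forest is acyclic, \ref{lem:posmat}(3) applies trivially to each residual graph with respect to a fixed bipartition of $G$. This sidesteps the bookkeeping of the inductive step. The trade-off is that your proof imports K\"onig's theorem, whereas the paper's argument is self-contained; but for a reader who accepts K\"onig's theorem as standard, your approach is more transparent.
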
 

\begin{proof}

	\begin{itemize}
  \item[(1)] 
    Since we have already argued that $\pmd(G)\leq |E|$ to prove the first statement we have show 
    $\pmd(G)\leq 2n-3$. To this end we may assume that $G$ is the complete graph $K_n$ because 
    any $\posMa$-decomposition of $K_n$ induces a $\posMa$-decomposition 
    on its subgraphs. For $\ell=1,\dots, 2n-3$ we set 
    $E_\ell=\{ \{i,j\} : i+j=\ell+2\}$.
    Clearly one has $E=\cup_{\ell=1}^{2n-3} E_\ell$. So to prove that this is a  
    $\posMa$-decomposition of $K_n$ we have to prove that $E_t$ is a positive matching on 
    $G_t=([n], \cup_{\ell=t}^{2n-3} E_\ell)$.  To this end  we build $E_t$ by inserting the edges 
    one by one starting from those that involve  vertices  with smaller indices  and  repeatedly use  
    \ref{lem:posmat} (2)  to prove that we actually get a positive matching. 
    For example for $n = 8$, to prove that $E_7$ is a positive matching on $G_7$ we order the elements in
    $E_7$ as follows $ \{4,5\},  \{3,6\},  \{2,7\}, \{1,9\}$. We assume we know already that 
    $ \{\{4,5\},  \{3,6\} \}$ is a positive matching and use  \ref{lem:posmat} (2) with $A=\{2,7\}$ and $a=2$ 
    to prove that $ \{\{4,5\},  \{3,6\}, \{2,7\}  \}$ is a positive matching matching as well.    

  \item[(2)] In this case it is enough to prove that 
  $\pmd(K_{m,n})\leq n+m-1$.  For $\ell =1,\ldots, m+n-1$ we $E_\ell = \{ \{i,\tilde{j}\}  :~i+j=\ell+1\}$. 
    Clearly one has $E = \bigcup_{\ell=1}^{m+n-1}  E_\ell$. So to prove that this is a  positive matching decomposition 
    of $K_{m,n}$ we have to prove that $E_\ell$ is a positive matching on $E \setminus 
    \bigcup_{k=1}^{\ell-1} E_k$ for $\ell = 1 ,\ldots, m+n-1$. 
        
    For $\ell = 1$  the assertion is obvious since  $E_1$ 
    contains a single edge.  
    Now assume $\ell \geq 2$. By \ref{lem:posmat}(3) it suffices to show 
    that directing the edges in $E_\ell$ from $[m]$ to $[\tilde{n}]$
    and the edges in $E \setminus \bigcup_{k=1}^\ell E_k$ in the other
    direction yields an acyclic orientation.
    Assume the resulting directed graph has a directed cycle. Let  $\{i,\tilde{j}\} \in E_\ell$ be the edge 
    from $E_\ell$ in this directed cycle for which $j$ is minimal. 
    The directed edge following the edge $i\rightarrow \tilde{j}$ in the directed cycle is 
    of the form $\tilde{j} \rightarrow i'$ for some $i'$ with $i'+j > \ell+1$. This implies $i' > i$. 
    Now let $i' \rightarrow \tilde{j'}$ be the edge following $\tilde{j} \rightarrow i'$ in the directed
    cycle. Then $\{ i',\tilde{j'}\} \in E_\ell$ and $i'+j' = \ell+1$. But this yields $j' < j$ which
    contradicts the minimality of $j$.
    Hence there is no directed cycle and $E_\ell$ is a positive matching on
    $E \setminus \bigcup_{k=1}^{\ell-1} E_k$.  
   
  \item[(3)] The inequality $\Delta(G)\leq \pmd(G)$ is obvious. To prove that equality holds if $G$ is a forest we argue by induction on the number of vertices. 
    We may assume $\{n-1,n\} \in E$ and that $n$ is a leaf of $G$. Hence  $G_1=G-n$ is a forest on $n-1$ vertices and by induction there exists a positive matching 
    decomposition  $E_1,\dots, E_p$ of   $G_1$ with $p=\Delta(G_1)$. 
    If $\Delta(G_1)<\Delta(G)$ we may simply set  $E_{p+1}=\{ \{n-1,n\} \}$ and note that, by virtue of   
    \ref{lem:posmat} (1),  $E_1,\dots, E_{p+1}$  is a  positive matching decomposition of $G$. If instead 
    $\Delta(G_1)=\Delta(G)$ then there exists  $i$ such that $n-1 \not\in V_{E_i}$ and hence 
    $E_i'=E_i\cup\{ \{ n-1,n\} \}$ is a matching.  Using \ref{lem:posmat} (1) and (2) one easily checks that  
    the resulting decomposition $E_1,\dots, E_{i-1},E_i', E_{i+1} \dots, E_p$ is  a  positive matching decomposition of $G$. 
  \end{itemize}
\end{proof}

Next we connect positive matching decompositions to algebraic properties of
LSS-ideals.

\begin{Lemma}
  \label{lem:termorder}
  Let $H = (V,E)$ be a hypergraph such that $E$ is a clutter, 
  $d \geq p=\pmd(H)$ and $E = \bigcup_{\ell=1}^p
  E_\ell$ a positive matching decomposition. Then there exists a term order
  $<$ on $S$ such that for every  $\ell$ and  every $A \in E_\ell$  we have
  \begin{eqnarray}
    \label{eq:termorder} 
    \ini_{<}(f_{A}^{(d)}) & = & \prod_{i \in A} y_{i\ell}.
  \end{eqnarray}
\end{Lemma}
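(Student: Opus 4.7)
The plan is to construct a real weight vector $\omega = (\omega_{i,\ell}) \in \RR^{V \times [d]}$ with the property that, for every $\ell \in [p]$ and every $A \in E_\ell$, the monomial $\prod_{i\in A} y_{i\ell}$ strictly dominates all the other monomials of $f_A^{(d)}$ in $\omega$-weight, and then refine $\omega$ to an honest term order in the standard way.

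First I would unpack what the positive matching decomposition gives. For each $\ell \in [p]$ fix a weight function $w_\ell \colon V \to \RR$ witnessing that $E_\ell$ is a positive matching on $(V, E \setminus \bigcup_{j < \ell} E_j)$ in the sense of \ref{def:posmat}. Writing $c_k(A) := \sum_{i \in A} w_k(i)$, the positive matching axioms record, for each $A \in E_\ell$, that $c_\ell(A) > 0$ and that $c_k(A) < 0$ for every $k < \ell$ (because such an $A$ lies in $(E \setminus \bigcup_{j<k} E_j) \setminus E_k$); crucially, no sign information is available for $c_k(A)$ when $k > \ell$.

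To neutralise those uncontrolled layers I would stack the $w_\ell$'s in a geometric hierarchy: for a parameter $M > 1$ set $\omega_{i,\ell} = w_\ell(i)\, M^{p-\ell}$ for $\ell \in [p]$ and $\omega_{i,\ell} = 0$ for $p < \ell \leq d$. For $A \in E_\ell$, the $\omega$-weight of the $k$-th monomial of $f_A^{(d)}$ equals $c_k(A)\, M^{p-k}$ (with $c_k(A) = 0$ when $k > p$). The layer-$\ell$ contribution $c_\ell(A)\, M^{p-\ell}$ is positive; every layer $k < \ell$ contributes a strictly negative number; every layer $p < k \leq d$ contributes $0$; and the comparison with layers $\ell < k \leq p$ reduces to $c_\ell(A)\, M^{k-\ell} > c_k(A)$, which holds once $M$ exceeds a threshold depending only on the finitely many numbers $c_k(A)$. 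Choosing $M$ above all such thresholds simultaneously makes $\prod_{i\in A} y_{i\ell}$ the unique $\omega$-maximal monomial of $f_A^{(d)}$ for every edge $A$.

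Finally, I would pass from $\omega$ to a bona fide term order. After adding a sufficiently large constant to every $\omega_{i,\ell}$ to render $\omega$ non-negative, one refines $\omega$ by, say, lexicographic order as a tie-breaker, producing a term order $<$. The constant shift is harmless because each $f_A^{(d)}$ is homogeneous of degree $|A|$, so a uniform additive shift rescales every monomial's weight by the same amount and preserves all intra-$f_A^{(d)}$ comparisons. Since the $\omega$-maximum is already unique within each $f_A^{(d)}$, the tie-breaker is never invoked for those polynomials, and $\ini_<(f_A^{(d)}) = \prod_{i \in A} y_{i\ell}$, as required. The one subtlety worth flagging is precisely the absence of sign control for $k > \ell$; it is this gap that forces the geometric weighting $M^{p-\ell}$, after which the verification is a mechanical case split.
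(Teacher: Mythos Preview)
Your proof is correct and follows essentially the same approach as the paper: both constructions use the positive-matching weight functions $w_\ell$ to build weight vectors supported on the individual columns, and then combine them so that layer $\ell$ dominates all later layers. The only cosmetic difference is that the paper combines the column-weights $\ww_\ell$ lexicographically (degree first, then $\ww_1$, then $\ww_2$, \ldots, then an arbitrary tie-breaker), whereas you fold them into a single weight vector $\omega = \sum_\ell M^{p-\ell}\ww_\ell$ for large $M$; these two devices are well known to produce the same initial terms on any finite set of polynomials.
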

\begin{proof}
  To define $<$ we first define weight vectors $\ww_1,\dots, \ww_p
  \in \RR^{V \times [d]}$. 
  For that purpose we use the weight functions $w_\ell : V \rightarrow \RR$,
  associated to each matching $E_\ell$, $\ell = 1,\ldots, p$.
  The weight vector $\ww_\ell$ is defined as follows: 
  \begin{itemize}
     \item $\ww_\ell(y_{ik})=0$ if $k\neq \ell$ and 
     \item $\ww_\ell(y_{i\ell})=w_\ell(i)$. 
  \end{itemize}

 By construction it follows that: 

  \begin{eqnarray}
    \label{eq:ini}
    \ini_{\ww_1}( f_{A}^{(d)}) & = & \left\{
    \begin{array}{ll}
      \prod_{i\in A} y_{i1}  & \mbox{ if } A \in E_1\\
      \displaystyle{\sum_{k=2}^d \prod_{i \in A} y_{ik}} & 
	     \mbox{ if } A \in E \setminus \{ E_1 \}.
    \end{array}
    \right.
  \end{eqnarray}

  We define the term order $<$  as follows: $y^\alpha<y^\beta$ if 
  \begin{itemize}
    \item[(1)] $|\alpha|<|\beta|$ or 
    \item[(2)] $|\alpha|=|\beta|$ and $\ww_\ell(y^\alpha)<\ww_\ell(y^\beta)$ 
      for the smallest $\ell$ such that 
      $\ww_\ell(y^\alpha)\neq \ww_\ell(y^\beta)$ or 
    \item[(3)] $|\alpha|=|\beta|$ and $\ww_\ell(y^\alpha)=\ww_\ell(y^\beta)$ 
      for all $\ell$ and $y^\alpha<_0 y^\beta$ for an arbitrary but fixed 
      term order $<_0$.  
  \end{itemize} 

  Now a simple induction  shows that
  for all $\ell$ and for all $A \in E_\ell$ we have 
  $\ini_{<}(f_{A}^{(d)})=\prod_{i \in A} y_{i\ell}$.
\end{proof}

We are ready to prove \ref{thm:pmd}:  
  
\begin{proof}[Proof of \ref{thm:pmd}] 
 Let $d\geq p = \pmd (G)$ and 
  $E=\bigcup_{\ell=1}^p E_\ell$ a $\posMa$-decomposition of $G$. 
  By \ref{lem:termorder} there is
  a term order $<$ satisfying \eqref{eq:termorder}.
  Since each $E_\ell$, $\ell =1,\ldots,p$, is a matching
  \eqref{eq:termorder} implies that the initial monomials 
  of the generators $f_A^{(d)}$ of $L_H^\KK(d)$ are 
  pairwise coprime and square free.  Then the  assertion follows from  \ref{prop:transfer}. 
 The rest follows from  \ref{thm:prime_and_ci}. 
\end{proof}  
 
The following is an immediate consequence of \ref{thm:pmd} and \ref{pmd}: 

\begin{Corollary}
   \label{cor:bounds} 
  Let $G=([n],E) $ be a graph. Then  $L_G^\KK(d)$ is a radical complete intersection for
         $d \geq \min\{ 2 n -3, |E|\}$ and prime for   $d \geq \min\{ 2 n -3, |E|\}+1$.
 If  $G$ is bipartite then    $L_G^\KK(d)$ is a radical complete intersection for
         $d \geq \min \{ n-1,|E|\}$ and prime for
         $d \geq \min \{ n-1,|E|\}+1$.
         \end{Corollary}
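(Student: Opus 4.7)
The plan is to combine the two ingredients named immediately before the corollary. First I would invoke \ref{pmd}(1): it states $\pmd(G)\leq \min\{2n-3,|E|\}$ for any graph $G=([n],E)$. Hence any integer $d\geq \min\{2n-3,|E|\}$ automatically satisfies $d\geq \pmd(G)$, so \ref{thm:pmd} applies and yields that $L_G^\KK(d)$ is a radical complete intersection. The prime case works identically: if $d\geq \min\{2n-3,|E|\}+1$ then $d\geq \pmd(G)+1$, so the second half of \ref{thm:pmd} gives primality.

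For the bipartite case the argument is the same, but using \ref{pmd}(2): bipartiteness improves the bound to $\pmd(G)\leq \min\{n-1,|E|\}$, and feeding this into \ref{thm:pmd} gives the stronger bounds stated in the corollary.

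There is essentially no obstacle; the entire content is a bookkeeping step that combines the combinatorial bound on $\pmd(G)$ with the algebraic transfer provided by \ref{thm:pmd}. The only minor thing worth double-checking is the indexing in the prime case: \ref{thm:pmd} requires $d\geq \pmd(G)+1$, which matches the ``$+1$'' appearing in the displayed thresholds. No other ideas or computations are needed; in particular one does not need to revisit the construction of a positive matching decomposition or the Gröbner argument, since these are already packaged inside \ref{thm:pmd} and \ref{pmd}.
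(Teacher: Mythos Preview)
Your proposal is correct and matches the paper's own approach exactly: the corollary is stated there as an immediate consequence of \ref{thm:pmd} and \ref{pmd}, which is precisely the combination you describe. There is nothing to add.
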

         
\section{Proofs of \ref{thm:prim3} and \ref{thm:forest} } 
\label{sec:6}

Next we provide the proof of \ref{thm:prim3}.

\begin{proof}[Proof of \ref{thm:prim3}] ~\\

  \begin{itemize}
    \item[(1)] 
  By \ref{obsKab} if $L^\KK_G(3)$ is prime then $G$ does not contain $K_{1,3}$   and $K_{2,2}$. 
  Now assume $G$ does not contain $K_{1,3}$ and $K_{2,2}$. 
  In addition, we may assume that $\KK$ is algebraically closed.  Since the tensor product over $\KK$ of $\KK$-algebras that
  are domains is a domain (see the Corollary to Proposition 1 in Bourbaki's Algebra 
  \cite[ Chapter v, 17]{Bu}) we may also assume that 
  the graph is connected.
  A connected graph not containing $K_{1,3}$ and $K_{2,2}$ is either an isolated vertex or a path 
  $P_n$ on $n>1$ vertices 
  or a cycle $C_n$ with $n$ vertices for $n =3$ or $n \geq 5$. 
  For an isolated vertex we have $L_G^\KK(3) = (0)$. 
  Hence we have to prove that $L_G^\KK(3)$ is prime when $G=P_n$ for
  $n\geq 2$ or $G=C_n$ for $n=3$ or $n\geq 5$. If $G=P_n$ then by \ref{pmd} 
  $\pmd(P_n)=\Delta(P_n)\leq 2$. 
  Hence by \ref{thm:pmd}  it follows that $L_{P_n}^\KK(3)$ is prime. 

  Now let $G=C_n$ for $n=3$ or $n \geq 5$ and 
  set $m=n-1$. 
  To prove that $L_{C_n}^\KK(3)$ is prime we use the symmetric algebra 
	perspective. Observe that $C_{n}-n$ is $P_{m} = P_{n-1}$. 
  Set $J=L_{P_{m}}^\KK(3)$, 
  $S=\KK[y_{ij} : i\in [m]  \ \ j\in [3] ]$ and $R=S/J$. We have already 
  proved that $J$ is a prime complete intersection of height $m-1$. 
  We have to prove that the symmetric algebra of the cokernel of the 
  $R$-linear map: 
 
  $$
    R^2\stackrel{Y}\to R^3 \mbox{ with } Y=
    \left(
      \begin{array}{ccc}
         y_{11}  & y_{12}  & y_{13}    \\
         y_{m1}  & y_{m2}  & y_{m3}
      \end{array}
    \right)
  $$
  is a domain. Since by \ref{Arema2} $I_2(Y)\neq 0$ in $R$, taking into 
  consideration \ref{Arema1} we may apply \ref{AHu}. Therefore, it is enough 
  to prove that $$\height\, I_1(Y)\geq 3 \mbox{ and } \height\, I_2(Y)\geq 2 
  \mbox{  in } R.$$
  Equivalently, it is enough to prove that 
  \begin{eqnarray}
    \label{eq:claim1}
	  \height\, I_1(Y)+J & \geq & m+2 \mbox{ and } \\ 
    \label{eq:claim2}
	  \height\, I_2(Y)+J & \geq & m+1 \mbox{  in } S.
  \end{eqnarray}
  First we prove \eqref{eq:claim1}.  
  Since $\height\,I_1(Y)=6$ in $S$ then \eqref{eq:claim1} is obvious for 
  $m\leq 4$. For $m>4$ observe that 
  $I_1(Y)+J$ can be written as $I_1(Y)+H$ where $H$ is the LSS-ideal of the 
  path  with vertices  $2,3,\dots, m-1$. Because $I_1(Y)$ and $H$ 
  use disjoint set of variables, we have 
  $$\height\, I_1(Y)+H=6+m-3=m+3$$
  and this proves \eqref{eq:claim1}. Now we note  that the 
  condition $\height\,I_2(Y)\geq 1$ holds in $R$ because $R$ is a domain and 
  $I_2(Y)\neq 0$. Hence we deduce from \ref{AHu}(1) that $L_{C_n}^\KK(3)$ is a 
  complete intersection for all $n\geq 3$. 

  It remains to prove \eqref{eq:claim2}. Since $I_2(Y)$ is a prime ideal of $S$ of 
  height $2$ and $J\not\subset I_2(Y)$  the ideal $I_2(Y)+J$ has height at least $3$. 
  Hence the assertion \eqref{eq:claim2} in obvious for $m=2$, i.e. $n=3$. Therefore, we may assume 
  $m\geq 4$ (here we use $n\neq 4$). Let $P$ be a prime ideal of $S$ 
  containing $I_2(Y)+J$. We have to prove that $\height\,P\geq m+1$. If $P$ 
  contains $I_1(Y)$ then $\height\,P\geq  m+2$ by \eqref{eq:claim1}. So we 
  may assume that $P$ does not contain $I_1(Y)$, say $y_{11}\not\in  P$, and 
  prove that $\height\,PS_{x}\geq m+1$ where $x=y_{11}$. 
  Since $I_2(Y)S_{x}=(y_{m2}-x^{-1}y_{m1}y_{12}, y_{m3}-x^{-1}y_{m1}y_{13} )$  
  we have

  \begin{eqnarray*}
	  f_{m-1,m}^{(3)} & = & y_{m-1,1}y_{m1}+y_{m-1,2}y_{m2}+y_{m-1,3}y_{m3} \\
			  & = & y_{m-1,1}y_{m1}+y_{m-1,2}x^{-1}y_{m1}y_{12}+y_{m-1,3}x^{-1}y_{m1}y_{13} \\ 
			  & = & x^{-1}y_{m1} f_{1,m-1}^{(3)} \mod I_2(Y)S_{x}
  \end{eqnarray*}

  From $f_{m-1,m}^{(3)}\in J$ it follows that $y_{m1} f_{1,m-1}^{(3)} \in PS_x$. 
  This implies that either $y_{m1}\in PS_x$ or $f_{1,m-1}^{(3)}\in PS_x$. 
  In the first case $PS_x$ contains $y_{m1},y_{m2}, y_{m3}$ and the LSS-ideal 
  associated to the path with vertices $1,\dots, m-1$. Hence 
  $\height\,PS_x\geq 3+m-2=m+1$ as desired. 
  Finally, if $f_{1,m-1}^{(3)}\in PS_x$ we have that $PS_x$ contains the 
  ideal $L_{C_{m-1}}^\KK(3)$ associated to the  cycle with vertices 
  $1,\dots, m-1$ and we have already observed that this ideal is a complete 
  intersection. Since $y_{m2}-x^{-1}y_{m1}y_{12}, y_{m3}-x^{-1}y_{m1}y_{13}$ 
  are in $PS_x$ as well it follows that $\height\,PS_x\geq 2+m-1=m+1$. 
 
  \item[(2)] 
  
    For the ``only if'' part we note that if $L^\KK_G(2)$ is a complete intersection 
    then $L^\KK_G(3)$ is prime by \ref{thm:prime_and_ci} and hence $G$ does cannot contain 
    $K_{1,3}$  by \ref{obsKab}. Suppose, by contradiction, that $G$ contains $C_{2m}$ for some 
    $m \geq 2$. Hence $L_{C_{2m}}^\KK(2)$ is a complete intersection of height $2m$. But the 
    generators of $L_{C_{2m}}^\KK(2)$ are (up to sign) among the $2$-minors of 
    the matrix: 
    $$\left (
      \begin{array}{cccccc}
        y_{11} & -y_{22}  &    y_{31} &\dots & y_{2m-1,1} &  -y_{2m,2} \\
        y_{12} &  y_{21}  &    y_{32} &\dots & y_{2m-1,2} &   y_{2m,1} 
      \end{array} 
      \right )
    $$
    and the ideal of $2$-minors of such a matrix has height $2m-1$,  
    a contradiction. 
  
    For the converse implication, we may assume that $\KK$ is algebraically closed. Since the 
    tensor product over a perfect field $\KK$ of reduced $\KK$-algebras is 
    reduced \cite[ Thm 3, Chapter V, ¤15]{Bu}, we may also assume that $G$ is 
    connected. A connected graph satisfying the assumptions is either  an isolated vertex,  
    or a path or a cycle with a odd number of vertices. 
    We have already observed that $\pmd(P_n)=\Delta(P_n)\leq 2$. 
    By \ref{thm:pmd} it follows that $L_{P_n}^\KK(2)$ is a complete intersection. 
    It remains to prove that $L^\KK_{C_{2m+1}}(2)$ is a complete intersection 
    (of height $2m+1$). Note that $L^\KK_{P_{2m+1}}(2)\subset 
    L^\KK_{C_{2m+1}}(2)$ and we know already that $L^\KK_{P_{2m+1}}(2)$ is a 
    complete intersection of height $2m$. Hence it remains to prove that 
    $f_{1,2m+1}^{(2)}$ does not belong to any  minimal prime of 
    $L^\KK_{P_{2n+1}}(2)$. 
    The generators of $L^\KK_{P_{2n+1}}(2)$ are (up to sign) the adjacent 
    $2$-minors of the matrix: 
 
    $$
      Y=\left (
      \begin{array}{cccccccc}
        y_{11} & -y_{22}  &    y_{31} &\dots & y_{2m-1,1} &  -y_{2m,2}  &  y_{2m+1,1}\\
        y_{12} &  y_{21}  &    y_{32} &\dots & y_{2m-1,2} &   y_{2m,1}   &  y_{2m+1,2}
      \end{array} 
      \right )
    $$

    The minimal primes of $L^\KK_{P_{2n+1}}(2)$ are described in the proof of 
    \cite[Thm.4.3]{DES}, see also \cite{HS} and \cite{HHHKR}. By the description 
    given in \cite{DES} it is easy to see that all minimal primes of 
    $L^\KK_{P_{2n+1}}(2)$ with the exception of $I_2(Y)$ are contained in the 
    ideal $Q=(y_{ij} : 2<i<2m+1 \ \ 1\leq j\leq 2)$.  
    Clearly, $f_{1,2m+1}^{(2)}\not\in Q$. Finally, one has 
    $f_{1,2m+1}^{(2)}\not\in I_2(Y)$ since the monomial $y_{11}y_{2m+1,1}$ 
    is divisible by no monomials in the support of the generators of $I_2(Y)$. 
  \end{itemize}
\end{proof}

We proceed with the proof of \ref{thm:forest}.  We 
first formulate a more general statement. 
For this we need to introduce the concept of Sturmfels-Cartwright 
ideals. This concept was coined in \cite{CDG2} 
inspired by earlier work in \cite{CDG1} and \cite{CS}. It was further 
developed and applied to various classes of ideals in \cite{CDG3} and 
\cite{CDG4}. 

Consider for $d_1,\ldots, d_n \geq 1$ the polynomial ring 
$S=\KK[y_{ij} : i \in [n], j \in [d_i]]$ with multigrading
$\deg y_{ij}=\ee_i\in \ZZ^n$. The group 
$G=\GL_{d_1}(\KK)\times \dots \times \GL_{d_n}(\KK)$ acts naturally on $S$ 
as the group of $\ZZ^n$-graded $K$-algebra automorphism. The Borel subgroup 
of $G$ is $B=U_{d_1}(\KK)\times \dots \times U_{d_n}(\KK)$ where $U_d(\KK)$ 
denotes the subgroup of upper triangular matrices in $\GL_d(\KK)$. 
A $\ZZ^n$-graded ideal $J$ is Borel fixed if $g(J)=J$ for every 
$g\in B$. A $\ZZ^n$-graded ideal $I$ of $S$ is
called a Cartwright-Sturmfels ideal if there exists a radical Borel
fixed ideal $J$ with the same multigraded Hilbert-series. 

\begin{Theorem}
  \label{CaSt}
  For $d_1,\ldots, d_n \geq 1$ let 
  $S=\KK[y_{ij} : i \in [n], j \in [d_i]]$ be the polynomial ring with 
  $\ZZ^n$ multigrading induced by $\deg y_{ij}=\ee_i\in \ZZ^n$  
  and $G = (V,E)$ be a forest. For each 
  $e = \{i,j\} \in E$ let $f_e \in S$ be a $\ZZ^n$-graded polynomial  
  of degree $\ee_i + \ee_j$. 
  Then $I = (f_e~:~e\in E)$ is a Cartwright-Sturmfels ideal.   
  In particular, $I$ and all its initial ideals are radical. 
\end{Theorem}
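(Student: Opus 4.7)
The plan is to induct on $|E|$. The base case $|E|=0$ is trivial since $I=(0)$ is a radical Borel-fixed ideal. For $|E|\geq 1$, use that a forest has a leaf: pick a leaf $v$ of $G$ with unique incident edge $e=\{v,u\}$. The multidegree $\ee_v$ appears only in $f_e$, so every other generator $f_{e'}$ lies in the subring $S':=\KK[y_{ij}:i\in V\setminus\{v\},\ j\in[d_i]]$. The ideal $I':=(f_{e'}:e'\neq e)\subseteq S'$ is of the same multihomogeneous form for the subforest $G-v$, so by the inductive hypothesis it is Cartwright-Sturmfels in $S'$; fix a radical Borel-fixed ideal $J'\subseteq S'$ with the same $\ZZ^n$-multigraded Hilbert series as $I'$.

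My proposed Cartwright-Sturmfels witness for $I$ is $J:=J'S+(y_{v,1}y_{u,1})\subseteq S$. Borel-fixedness of $J$ follows from the blockwise action $\prod_{i\in V}U_{d_i}(\KK)$: the variables $y_{v,1}$ and $y_{u,1}$ are the Borel-smallest in their respective blocks, so the squarefree monomial $y_{v,1}y_{u,1}$ is a strongly stable generator whose addition preserves the Borel-fixed property of $J'$. Radicality of $J$ is immediate because it is a squarefree monomial ideal in which the variable $y_{v,1}$ is fresh over $J'$.

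The main obstacle is then to verify the multigraded Hilbert series identity $H_{S/I}=H_{S/J}$. The plan here is to invoke the multigraded Borel-generic initial ideal machinery of \cite{CDG2,CDG3,CDG4}: acting on $I$ by a generic element of the Borel group $B=\prod_{i\in V}U_{d_i}(\KK)$ and taking the initial ideal for a suitable term order, one checks that the Borel-generic initial form of the bilinear polynomial $f_e$ (with respect to the Borel action on the $u$- and $v$-blocks) is exactly $y_{v,1}y_{u,1}$, while $\gin(I'S)$ has the same multigraded Hilbert series as $J'S$ by the inductive hypothesis. Combining these two facts yields the desired matching of multigraded Hilbert series for $I$ and $J$ and confirms that $I$ is Cartwright-Sturmfels. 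The concluding assertion that $I$ and all its initial ideals are radical is then a general property of Cartwright-Sturmfels ideals recalled in the cited CDG references.
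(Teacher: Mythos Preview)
Your proposed witness $J=J'S+(y_{v,1}y_{u,1})$ does not in general have the same multigraded Hilbert series as $I$, so the argument fails at the decisive step. Take $G$ to be the path $1\text{--}2\text{--}3$ with $d_1=d_2=d_3=2$ and let the $f_e$ be the standard LSS generators, so that $I$ is a complete intersection. Removing the leaf $v=3$ (with $u=2$), your induction gives $J'=(y_{1,1}y_{2,1})$ and hence $J=(y_{1,1}y_{2,1},\,y_{2,1}y_{3,1})$. The two generators of $J$ share the factor $y_{2,1}$, so the minimal free resolution of $S/J$ has a single first syzygy in degree $\ee_1+\ee_2+\ee_3$, and the $K$-polynomial of $S/J$ is $1-z_1z_2-z_2z_3+z_1z_2z_3$. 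On the other hand the $K$-polynomial of $S/I$ is $(1-z_1z_2)(1-z_2z_3)=1-z_1z_2-z_2z_3+z_1z_2^2z_3$. These differ, so $J$ cannot be a Cartwright--Sturmfels witness for $I$. Your gin sketch does not repair this: from the initial forms of the generators you only obtain $\gin(I)\supseteq J$, and the containment is strict exactly because of this Hilbert series mismatch---the multigraded gin acquires an additional generator in degree $\ee_1+2\ee_2+\ee_3$. ``Combining these two facts'' therefore does not yield the required equality of Hilbert series.

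The paper proceeds quite differently and in a way that explains what is missing. It first reduces to the case where the $f_e$ form a regular sequence, by adjoining auxiliary variables and then invoking the fact that the Cartwright--Sturmfels property is preserved under multigraded linear sections. This reduction is essential: without it the multigraded Hilbert series of $S/I$ is not determined by $G$ alone (for instance some $f_e$ could be zero), so no witness depending only on the combinatorics can work. Once the $f_e$ are a regular sequence the $K$-polynomial of $S/I$ is exactly $\prod_{\{i,j\}\in E}(1-z_iz_j)$. The paper then passes to the dual Cartwright--Sturmfels$^*$ side: it suffices to exhibit a monomial ideal in a polynomial ring $\KK[y_1,\dots,y_n]$ with \emph{one} variable per vertex whose $K$-polynomial as a module equals $\prod_{\{i,j\}\in E}(z_i+z_j-z_iz_j)$, and it verifies that $\prod_{\{i,j\}\in E}(y_i,y_j)$ does the job via an inductive Tor computation exploiting the leaf structure of the forest. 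In the path example above this product is $(y_1,y_2)(y_2,y_3)$, whose $K$-polynomial indeed matches $(z_1+z_2-z_1z_2)(z_2+z_3-z_2z_3)$.
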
 

\begin{proof} 
  First, we observe that we may assume that the generators $f_e$ of $I$ form a 
  regular sequence. To this end we introduce new variables and for each $e=\{i,j\}\in E$ we add to 
  $f_e$ a monomial $m_e$ in the new variables of degree $e_i+e_j$ so that $m_e$ and 
  $m_{e'}$ are coprime if $e \neq e'$. The new polynomials $f_e+m_e$ with 
  $e\in E$ form a regular sequence by \ref{prop:transfer} since their initial 
  terms with respect to an appropriate term order are the pairwise coprime  
  monomials $m_e$.  
  The ideal $I$ arises as a multigraded linear section of the ideal 
  $(f_e+m_e : e\in E)$ by setting all new variables to $0$. 
  By \cite[Thm. 1.16(5)]{CDG1} the family of Cartwright-Sturmfels ideals 
  is closed under any multigraded linear section. Hence it is enough to prove 
  the statement for the ideal $(f_e+m_e : e\in E)$. Equivalently we may assume 
  right away  that  the generators $f_e$ of $I$ form a regular sequences. 
  
  The multigraded Hilbert series of a multigraded $S$-module $M$ can by written 
  as $$\frac{K_M(z_1,\dots,z_n)}{ \prod_{i=1}^n (1-z_i)^{d_i} }.$$
  The numerator $K_M(z_1,\dots,z_n)$ is a Laurent polynomial polynomial with 
  integral coefficients called the $K$-polynomial of $M$. 
  Since the $f_e$'s form a regular sequence the $K$-polynomial of $S/I$ is 
  the polynomial: 
  $$
    F(z)=F(z_1,\dots,z_n)=\prod_{\{i,j\}\in E} (1-z_iz_j)
         \in \QQ[z_1,\dots, z_n].
  $$ 
  
  To prove that $I$ is Cartwright-Sturmfels we have to prove that there is a   
  Borel-fixed radical ideal $J$ such that the $K$-polynomial of $S/J$ is $F(z)$. 
  Taking into consideration the duality between Cartwright-Sturmfels ideals 
  and Cartwright-Sturmfels$^*$ ideals discussed in \cite{CDG2}, it is enough to 
  exhibit a monomial ideal $J$ whose generators are in the polynomial ring 
  $S'=\KK[y_{1},y_{2},\dots, y_{n}]$ equipped with the (fine) $\ZZ^n$-grading 
  $\deg y_i=\ee_i \in \ZZ^n$ such that the $K$-polynomial of $J$ regarded as an 
  $S'$-module is $F(1-z_1,\dots, 1-z_n)$, that is, 
  $$\prod_{\{i,j\}\in E} (z_i+z_j-z_iz_j).$$
  We claim that, under the assumption that $([n], E)$ is a forest, the ideal 
  $$J=\prod_{\{i,j\}\in E} (y_{i}, y_{j})$$ 
  has the desired property. In other words, we have to prove that the tensor 
  product  
  $$T_E=\bigotimes_{\{i,j\} \in E}  T_{\{i,j\}}$$ of the truncated Koszul 
  complexes: 
  $$T_{\{i,j\}}: 0\to S'(-\ee_i-\ee_j)  \to S'(-\ee_i)\oplus  S'(-\ee_j)\to 0$$
  associated to $y_{i}, y_{j}$ resolves the ideal $J$. 
  Consider a leaf $\{a,b\}$ of $E$.  Set $E'=E\setminus \{ \{a,b\} \}$, 
  $$J'=\prod_{\{i,j\}\in E'} (y_{i}, y_{j})$$ 
  and $J''=(y_a,y_b)$.  
  Then by induction on the number of edges we have that $T_{E'}$ resolves 
  the ideal $J'$. Then the homology of $T_E$ is $\Tor^{S'}_*(J',J'')$. 
  Since $\{a,b\}$ is a leaf, one of the two variables $y_a,y_b$ does not appear at all in the 
  generators of $J'$. Hence $y_a,y_b$ forms a regular $J'$-sequence.  
  Then $\Tor^{S'}_{\geq 1}(J',J'')=0$ and  hence $T_E$ resolves $J'\otimes J''$. 
  Finally, $J'\otimes J''=J'J''$ since $\Tor^{S'}_{1}(J',S/J'')=0$. 
  This concludes the proof that the ideal $I$ is a Cartwright-Sturmfels ideal. 
  Every initial ideal of a Cartwright-Sturmfels ideal is a Cartwright-Sturmfels 
  ideal as well because this property just depends on the Hilbert series. 
  In particular, every initial ideal of a Cartwright-Sturmfels ideal is 
  radical. 
\end{proof} 
  
Now we are ready to prove \ref{thm:forest}: 
  
\begin{proof}[Proof of \ref{thm:forest}] 
        Setting $d_1= \cdots = d_n = d$ and $f_e = f_e^{(d)}$ in \ref{CaSt}
      we have that $L_G^\KK(d)$ is a Cartwright-Sturmfels ideal and hence 
      radical. Assertions (2) and (3) follows  from \ref{pmd},  \ref{thm:pmd}, \ref{obsKab} and \ref{thm:prime_and_ci}. 
\end{proof}

\section{Invariant theory, determinantal ideals of matrices with $0$'s and their 
relation to LSS-ideals } 
\label{sec:minorgraph}

The first goal of this section is to recall some classical results from invariant 
theory, see for example the paper by De Concini and Procesi \cite{DP}. In particular, we recall how  
determinantal/Pfaffian rings arise as invariant rings of group actions.  
We assume throughout this section that the base field $\KK$ is of characteristic $0$.
After the recap of invariant theory we will establish the connection to LSS-ideals. 

\subsection{Generic determinantal rings as rings of invariants (\generic)} 

We take an $m\times n$ matrix of variables 
$X^\generic_{m,n}=(x_{ij})$ 
and consider the ideal $I_{d+1}^\KK(X^\generic_{m,n})$ of $S^\generic=\KK[x_{ij}~:~(i,j) \in [m] \times [n]]$ 
generated by the $(d+1)$-minors of $X_{m,n}^\generic$. 
Consider two matrices of variables $Y$ and $Z$ of size 
$m\times d$ and $d\times n$ and the following 
action of $\group=\GL_d(\KK)$ on the polynomial ring $\KK[Y,Z]$: 
The matrix $A\in \group$ acts by the $\KK$-algebra automorphism of   
$\KK[Y,Z]$ that sends $Y\to YA$ and $Z\to A^{-1}Z$. 
The entries of the product matrix $YZ$ are clearly invariant under this action.  
Hence the ring of invariants $\KK[Y,Z]^\group$ contains the subalgebra 
$\KK[YZ]$ generated by the entries of the product $YZ$. 
The {\it First Main Theorem of Invariant Theory} for this 
action says that $\KK[Y,Z]^\group=\KK[YZ]$.
We have a surjective $\KK$-algebra map: 

$$\phi: S^\generic \to \KK[Y,Z]^\group=\KK[YZ]$$ 

sending $X$ to $YZ$. 
Clearly the product matrix $YZ$ has rank 
$d$ and hence we have $I_{d+1}^\KK(X_{m,n}^\generic)\subseteq \Ker \phi$. 
The {\it Second  Main Theorem of Invariant Theory}
says that $I_{d+1}^\KK(X_{m,n}^\generic)=\Ker \phi$. Hence 

\begin{eqnarray}
  \label{eq:isogen}
  S/I_{d+1}^\KK(X_{m,n}^\generic) & \simeq &  \KK[YZ]
\end{eqnarray}
   
\subsection{Generic symmetric determinantal rings as rings of invariants (\symmetric)} 
	    
We take an $n\times n$ symmetric matrix of variables $X_n^\symmetric=(x_{ij})$  and consider the ideal 
$I_{d+1}^\KK(X_n^\symmetric)$ in $S^\symmetric=\KK[x_{ij}~:~1 \leq i \leq j \leq n]$ generated by the 
$(d+1)$-minors of $X_n^\symmetric$. 
Consider a  matrix of variables $Y$  
of size $n\times d$ and the following action of the orthogonal group 
$\group=\OO_d(\KK)=\{ A \in \GL_d(\KK) : A^{-1}=A^T\}$ 
on the polynomial ring $\KK[Y]$:
Any $A\in \group$ acts by the $\KK$-algebra automorphism of  
$\KK[Y]$ that sends  $Y$ to $YA$. 
The entries of the product matrix $YY^T$ are invariant under 
this action and hence the ring of invariants contains  
the subalgebra $\KK[YY^T]$ generated by the entries of $YY^T$. 
The {\it First Main Theorem of Invariant Theory} for this action 
asserts  that $\KK[Y]^G=\KK[YY^T]$.
Then we have a surjective presentation: 
$$\phi:S^\symmetric \to  \KK[YY^T]$$ 
sending $X$ to $YY^T$. Since  the product matrix $YY^T$ has  rank $d$   
we have $I_{d+1}(X)\subseteq \Ker \phi$.  
The {\it Second Main Theorem of Invariant Theory} then 
says that $I_{d+1}(X)=\Ker \phi$. Hence 

\begin{eqnarray}
  \label{eq:isosym}
  S^\symmetric/I_{d+1}^\KK(X_n^\symmetric) & \simeq &  \KK[YY^T].
\end{eqnarray}

\subsection{Generic Pfaffian rings as rings of invariants (\skewsymmetric)}

We take an $n\times n$ skew-symmetric matrix of variables $X_n^\skewsymmetric=(x_{ij})$ 
 and consider the ideal $\Pf^\KK_{2d+2}(X)$ generated by the Pfaffians 
of size $(2d+2)$   of
$X_n^\skewsymmetric$ in  
$S^\skewsymmetric=\KK[x_{ij}~:~1 \leq i < j \leq n]$. 
Consider a matrix of variables $Y$  
of size $n\times 2d$ and let $J$ 
be the $2d\times 2d$ block matrix with $d$ blocks 
    
$$\left(
      \begin{array}{cc}
        0 & 1\\
        -1& 0
      \end{array}
      \right)
$$
on the diagonal and $0$ in the other positions. The 
sympletic group $\group=\Sp_{2d}(\KK)=\{ A \in \GL_{2t}(\KK) :  AJA^T=J\}$ 
acts on the polynomial ring $\KK[Y]$ as follows: 
an $A\in \group$ acts on $\KK[Y]$ by the automorphism that
sends $Y\to YA$. The entries of the product matrix $YJY^T$ 
are invariant under this action and hence the ring of invariants 
contains the subalgebra $\KK[YJY^T]$ generated by the entries of 
$YJY^T$.  The {\it First Main Theorem of Invariant Theory} 
for the current action says that 
$\KK[Y]^G=\KK[YJY^T]$.
Then we have a surjective presentation: 
$\phi:S^\skewsymmetric  \to  \KK[YY^T]$ 
sending $X$ to $YJY^T$. The product matrix $YJY^T$ has rank $2d$ and hence 
we have $\Pf^\KK_{2d+2}(X)\subseteq \Ker \phi$.  
The {\it Second Main Theorem of Invariant Theory} for this action 
says that $\Pf^\KK_{2d+2}(X) =\Ker \phi$. Hence 

\begin{eqnarray}
  \label{eq:isoskew}
  S^\skewsymmetric/\Pf^\KK_{2d+2}(X_n^\skewsymmetric) & \simeq & \KK[YJY^T].
\end{eqnarray}
   
 \subsection{Determinantal ideals of matrices with $0$'s and their 
relation to LSS-ideals }

The classical invariant theory point of view   shows that the generic determinantal and Pfaffian  
ideals are prime as they are kernels of ring maps whose codomains are 
integral domains. Their height is also well known (see for example 
\cite{BV} and its list of references): 
  
\begin{itemize}
  \item[(\generic)] The height of the ideal $I_d^\KK(X_{m,n}^\generic)$ 
     of $d$-minors of a $m\times n$  
     matrix of variables is $(n+1-d)(m+1-d)$. 
  \item[(\symmetric)] The height of the ideal $I_d^\KK(X_n^\symmetric)$ of $d$-minors of a symmetric 
	  $n\times n$ matrix of variables is ${n-d+2 \choose 2}$. 
  \item[(\skewsymmetric)] The height  of the ideal of Pfaffians $\Pf^\KK_{2d}(X_n^\skewsymmetric)$ of size $2d$ 
     (and degree $d$) of an $n\times n$ skew-symmetric matrix of variables 
		is ${n-2d+2 \choose 2}$. 
\end{itemize} 
 
If one replaces the entries of the matrices with general linear forms in, 
say, $u$ variables, then Bertini's theorem in combination with the fact that 
the generic determinantal/Pfaffian rings are Cohen-Macaulay implies that  
the determinantal/Pfaffian ideals remain prime as long as 
$u\geq 2$+height and radical if $u\geq 1$+height.   

But what about the case of special linear sections of determinantal ideals of 
matrices? And what about the case of coordinate sections? 
Are the corresponding ideals prime or radical? 
To describe coordinate sections we employ the following notation. 

\begin{itemize}
  \item[(\generic)]
   In the generic case we take  
   a bipartite graph $G = ([m] \cup [\tilde{n}],E)$    and 
   denote by $X_G^\generic$ the matrix obtained from the $m\times n$ matrix of 
   variables  by replacing 	the entries in position $(i,j)$ with $0$ for  all $\{i,\tilde{j}\} \in  E$.
  \item[(\symmetric)]
   In the generic symmetric case   we take 
   a subgraph $G = ([n],E)$ of $K_n$ and denote by $X_G^\symmetric$ 
   the matrix obtained from the  $n\times n$  symmetric matrix  of variables 
   by replacing  with $0$ the entries in position $(i,j)$ and $(j,i)$  for all $\{i,j\} \in E$.   
  \item[(\skewsymmetric)]
   In the generic skew-symmetric case we take a subgraph 
   $G = ([n],E)$ of $K_n$ and denote by $X_G^\skewsymmetric$ 
   the matrix obtained from the skew-symmetric matrix  of variables  
     by replacing  with $0$  the entries in position $(i,j)$ and $(j,i)$  
     for all $\{i,j\} \in E$.  
     \end{itemize}

In this terminology $I_d^\KK(X_G^\generic)$ is the ideal of $d$-minors of $X_G^\generic$  
in $S^\generic$  and similarly in the symmetric case. 
We write $\Pf^\KK_{2d}(X_G^\skewsymmetric)$ for the ideal of Pfaffians of 
size $2d$ of $X_G^\skewsymmetric$ in $S^\skewsymmetric$. 
We ask for conditions on $G$ that imply that
$I_d^\KK(X_G^\generic)$ (resp. $I_d^\KK(X_G^\symmetric)$, resp. $\Pf^\KK_{2d}(X_G^\skewsymmetric)$) is
radical or prime or has the expected height.  

Clearly, special linear sections of generic determinantal ideals can give non-prime and non-radical 
ideals. On the positive side, for maximal minors, we have the following  
results: 

\begin{Remark}
  \begin{itemize}
    \item[(1)] Eisenbud \cite{E} proved that the ideal of maximal minors of a 
      $1$-generic $m\times n$ matrix of linear forms is prime and remains 
      prime  even  after modding out any set of $\leq m-2$ linear forms.  
      In particular, the ideal of maximal minors of an $m\times n$ matrix of 
      linear forms is prime provided the ideal generated by the entries of 
      the matrix has at least $m(n-1)+2$ generators. 
    \item[(2)] Giusti and Merle in \cite{GM} studied the ideal of maximal 
      minors of coordinate sections in the generic case. One of their main 
      results, \cite[Thm.1.6.1]{GM} characterizes, in combinatorial terms,  
      the subgraphs $G$ of $K_{m,n}$, $m \leq n$, such that 
      the variety associated to $I_m^\KK(X_G^\generic)$ is irreducible, 
      i.e. the radical of $I_m^\KK(X_G^\generic)$ is prime. 
    \item[(3)] Boocher proved in \cite{Bo} that  
      for any subgraph $G$ of $K_{m,n}$, $m \leq n$, the ideal 
      $I_m^\KK(X_G^\generic)$ is radical. Combining his result with the result 
      of 
      Giusti and Merle, one obtains a characterization of the graphs $G$ 
      such that $I_m^\KK(X_G^\generic)$ is prime. 
    \item[(4)] Generalizing the result of Boocher, in \cite{CDG1} and 
      \cite{CDG2} it is proved that ideals of maximal minors of a matrix of 
      linear forms that is either row or column multigraded is radical.  
  \end{itemize}
\end{Remark} 
  
In the generic case every non-zero minor of a matrix of type 
$X_G^\generic$ has no multiple factors because its multidegree is square-free. 
This explains, at least partially,  why  the determinantal ideals of $X_G^\generic$  have the tendency to be radical. 
However, the following example shows that  they are not radical in general. 
  
\begin{Example}
  \label{ex:4mnotrad} 
  Let $X_G^\generic$ be the $6\times 6$ matrix associated to the graph
  from \ref{ex:nrad}(3). That is, in the $6\times 6$ generic matrix we set to $0$ the entries in positions  
  $$(1,1), (1,2), (1,3), (1,4), (2,1), (2,2), (3,2), (3,3),  (4,3), (4,4), (5,1), (5,4).$$
  Then $I_4^\KK(X_G^\generic)$ is not radical over a field of characteristic $0$ 
  and very likely over any field.  Here the ``witness'' is $g =x_{1,5}$, i.e.  $I_4^\KK(X_G^\generic):g\neq I_4^\KK(X_G^\generic):g^2$. 
 Since $G$ is contained in $K_{5,4}$ one can consider as well $I_4^\KK(X_G^\generic)$ in the $5\times 5$ matrix but that ideal turns out to be radical.
 \end{Example}   
 
Similarly for symmetric  matrices we have: 

\begin{Example}
  \label{ex:5symnotrad} 
  Let $X_G^\symmetric$ be the $7\times 7$ generic symmetric matrix associated 
  to the graph
  from \ref{ex:nrad}(1). That is, in the $7\times 7$ generic symmetric matrix we  set to $0$ the entries in positions  
  $$\{1, 2\}, \{1, 3\}, \{1, 4\}, \{1, 5\}, \{2, 3\}, \{2, 4\}, \{2, 6\}, 
	 \{3, 5\}, \{4, 6\}$$
  as well as in the symmetric positions. Then $I_4^\KK(X_G^\symmetric)$ is not 
  radical over a field of characteristic $0$. 
  The witness here is $g=x_{1,6}$. Since $G$ is contained in $K_{6}$ one can 
  consider as well $I_4^\KK(X_G^\symmetric)$ in the $6\times 6$ matrix but that 
  ideal turns out to be radical.
 \end{Example}  
  
It turns out that \ref{ex:nrad}, \ref{ex:4mnotrad} and \ref{ex:5symnotrad} 
are indeed closely related as we now explain.    

Let $G = ([m] \cup [\tilde{n}],E)$ be a subgraph of the complete bipartite 
graph $K_{m,n}$.  
In view of the isomorphism  \eqref{eq:isogen} we have that 
$$S^\generic/\Big( I_{d+1}^\KK(X_{m,n}^\generic) + (x_{ij}~|~\{i,\tilde{j}\} \in E) \Big) \simeq \KK[YZ]/J_G(d)$$ 
where $Y=(y_{ij}),Z=(z_{ij})$ are respectively $m\times d$ and $d\times n$ 
matrices of variables and  $J_G(d)$ is the ideal of $\KK[YZ]$ generated by 
$(YZ)_{i,j}$ with $\{i,\tilde{j}\} \in E$. Furthermore 
$$I_{d+1}^\KK(X_{m,n}^\generic) + (x_{ij}~|~\{i,\tilde{j}\} \in E) = I_{d+1}^\KK(X_G^\generic) + (x_{ij}~|~\{i,\tilde{j}\} \in E).$$

The LSS-ideal 
$L_G^\KK(d)\subset \KK[Y,Z]$ is indeed equal to $J_G(d)\KK[Y,Z]$. Now it is a 
classical  result in  invariant 
theory (derived from the fact that linear groups are reductive in characteristic $0$),
that $\KK[YZ]$ is a direct summand of $\KK[Y,Z]$ in characteristic $0$. 
This implies that 

\begin{eqnarray*}
  J_G(d) & = & L_G^\KK(d) \cap \KK[YZ].
\end{eqnarray*}
 
The next proposition is an immediate consequence. 
 
\begin{Proposition} 
  \label{prop:traG}
  Let $\KK$ be a field of characteristic $0$,  $d \geq 1$ and 
  $G= ([m] \cup [\tilde{n}],E)$ 
  be a subgraph of $K_{m,n}$. 
  If  $L_G^\KK(d)$ is radical (resp.~is a complete 
  intersection, resp.~is prime)
  then  $I_{d+1}^\KK(X_G^\generic)$ is radical (resp.~has maximal height, resp.~is  prime). 
\end{Proposition}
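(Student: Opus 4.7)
The plan is to reduce each of the three implications to the pure-subring property $J_G(d) = L_G^\KK(d) \cap \KK[YZ]$ established just above the statement, together with the isomorphism
$$T/I_{d+1}^\KK(X_G^\generic) \simeq \KK[YZ]/J_G(d),$$
where $T := \KK[x_{ij} : \{i, \tilde j\} \notin E]$. First one reformulates each property of $I_{d+1}^\KK(X_G^\generic)$ as the corresponding property of $J_G(d) \subseteq \KK[YZ]$, then transfers that property from $L_G^\KK(d)$ to $J_G(d)$ via the direct-summand inclusion $\KK[YZ] \subseteq \KK[Y,Z]$.

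The reformulation step is routine. Since the generators of $I_{d+1}^\KK(X_G^\generic)$ lie in $T$ and $S^\generic$ is a polynomial extension of $T$ by the zeroed variables, radicality, primality, and height of $I_{d+1}^\KK(X_G^\generic)$ agree whether computed in $S^\generic$ or in $T$. Via the isomorphism these three properties become radicality, primality, and height $|E|$ of $J_G(d)$ in $\KK[YZ]$; the height translation uses the dimension identity $\dim T - \dim \KK[YZ] = (m-d)(n-d) - |E|$. For the radical and prime cases the transfer is then immediate: the embedding $\KK[YZ]/J_G(d) \hookrightarrow \KK[Y,Z]/L_G^\KK(d)$ realises $\KK[YZ]/J_G(d)$ as a subring of $\KK[Y,Z]/L_G^\KK(d)$, and a subring of a reduced ring (respectively, a domain) is itself reduced (respectively, a domain).

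The height case is where the real work lies. Assuming $L_G^\KK(d)$ is a complete intersection, one has $\height L_G^\KK(d) = |E|$ and $\dim \KK[Y,Z]/L_G^\KK(d) = d(m+n) - |E|$, while the inequality $\height J_G(d) \leq |E|$ holds automatically because $J_G(d)$ has $|E|$ generators. For the matching lower bound I would invoke the dimension formula for the categorical quotient under a reductive group action in characteristic zero: since $\KK[YZ]/J_G(d) = (\KK[Y,Z]/L_G^\KK(d))^{\GL_d}$ and the generic $\GL_d$-orbit on the variety $V(L_G^\KK(d))$ has dimension $d^2$ (its stabilizer is trivial at any point where $Y$ or $Z$ has full rank), one obtains
$$\dim \KK[YZ]/J_G(d) = \dim \KK[Y,Z]/L_G^\KK(d) - d^2 = d(m+n-d) - |E|,$$
and hence $\height J_G(d) = |E|$.

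The main obstacle is verifying the generic-orbit claim used in the last step: the equidimensional CI variety $V(L_G^\KK(d))$ of dimension $d(m+n) - |E|$ must contain a point where $Y$ or $Z$ has full rank $d$, i.e., it must not be contained in the rank-deficient locus $V(I_d(Y)) \cap V(I_d(Z))$. I expect this to follow from a direct comparison of dimensions together with the combinatorial constraints on $G$ forced by the CI hypothesis, in the spirit of \ref{obsKab}.
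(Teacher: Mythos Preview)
Your treatment of the radical and prime implications is correct and matches what the paper has in mind: the direct-summand identity $J_G(d)=L_G^\KK(d)\cap\KK[YZ]$ gives an embedding $\KK[YZ]/J_G(d)\hookrightarrow\KK[Y,Z]/L_G^\KK(d)$, and a subring of a reduced ring (resp.\ a domain) is reduced (resp.\ a domain).

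For the complete-intersection implication you take an unnecessarily hard route, and the obstacle you flag is genuine. Your GIT dimension argument needs \emph{every} irreducible component of $V(L_G^\KK(d))$ to contain a point with trivial $\GL_d$-stabilizer; under the bare hypothesis that $L_G^\KK(d)$ is a complete intersection the variety may well be reducible, and a dimension comparison with $V(I_d(Y))\cap V(I_d(Z))$ does not rule out an entire component sitting inside that locus (its codimension is only $m+n-2d+2$, which need not exceed $|E|$). So the ``direct comparison of dimensions'' you anticipate does not close the gap.

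The intended argument is much shorter and bypasses this entirely. The generators $f_e^{(d)}$, $e\in E$, already lie in the subring $\KK[YZ]$, and a regular sequence in $\KK[Y,Z]$ consisting of elements of a direct summand $\KK[YZ]$ is automatically a regular sequence in $\KK[YZ]$: writing $\KK[Y,Z]=\KK[YZ]\oplus C$ as $\KK[YZ]$-modules, one checks inductively that $\KK[YZ]/(f_{e_1}^{(d)},\dots,f_{e_i}^{(d)})$ remains a direct summand of $\KK[Y,Z]/(f_{e_1}^{(d)},\dots,f_{e_i}^{(d)})$, whence each $f_{e_{i+1}}^{(d)}$ is a nonzerodivisor on it. Thus if $L_G^\KK(d)$ is a complete intersection then $J_G(d)$ is generated by a regular sequence of length $|E|$ in the Cohen--Macaulay ring $\KK[YZ]$, so $\height J_G(d)=|E|$, and your own height translation then gives $\height I_{d+1}^\KK(X_G^\generic)=(m-d)(n-d)$.
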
 
 
Now we start from a subgraph $G$ of $K_n$.
For $d+1\leq n$ we may consider the coordinate 
section $I_{d+1}^\KK(X_G^\symmetric)$ of $I_{d+1}^\KK(X^\symmetric_n)$. 
Using the isomorphism \eqref{eq:isosym} we obtain: 
 
\begin{Proposition} 
  \label{traS}
  Let $\KK$ be a field of characteristic $0$ and 
  $G = ([n],E)$ a graph.
  If  $L_G^\KK(d)$ is radical  (resp.~is a complete
  intersection, resp.~is prime) 
  then   $I_{d+1}^\KK(X_G^\symmetric)$   is radical (resp.~has maximal height, resp.~is  prime). 
\end{Proposition}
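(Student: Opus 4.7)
The plan is to mirror the proof of \ref{prop:traG}, using the $O_d(\KK)$-action on $\KK[Y]$ (with $Y$ an $n\times d$ matrix of variables) in place of the $\GL_d(\KK)$-action on $\KK[Y,Z]$ from the generic bipartite case. First I would observe that substituting $x_{ij}=0$ for $\{i,j\}\in E$ turns $X_n^\symmetric$ into $X_G^\symmetric$, hence
$$I_{d+1}^\KK(X_G^\symmetric)+(x_{ij}:\{i,j\}\in E) \;=\; I_{d+1}^\KK(X_n^\symmetric)+(x_{ij}:\{i,j\}\in E),$$
and combining with \eqref{eq:isosym} yields the induced isomorphism
$$S^\symmetric/\bigl(I_{d+1}^\KK(X_G^\symmetric)+(x_{ij}:\{i,j\}\in E)\bigr) \;\cong\; \KK[YY^T]/J_G(d),$$
where $J_G(d)\subseteq \KK[YY^T]$ is the ideal generated by the entries $(YY^T)_{ij}$ for $\{i,j\}\in E$. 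By \ref{rem:LSSasDet}, $L_G^\KK(d)\subseteq \KK[Y]$ is exactly the extension $J_G(d)\KK[Y]$. Since $\chara \KK = 0$, the orthogonal group $O_d(\KK)$ is linearly reductive, and the Reynolds operator $\rho\colon\KK[Y]\to \KK[YY^T]$ is $\KK[YY^T]$-linear and splits the inclusion $\KK[YY^T]\hookrightarrow \KK[Y]$. Thus this inclusion is a pure extension, and applying $\rho$ to an arbitrary element of $L_G^\KK(d)\cap \KK[YY^T]$ gives $J_G(d)=L_G^\KK(d)\cap \KK[YY^T]$.

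To transfer algebraic properties, I note that no generator of $I_{d+1}^\KK(X_G^\symmetric)$ involves an $x_{ij}$ with $\{i,j\}\in E$, so $S^\symmetric/I_{d+1}^\KK(X_G^\symmetric)$ is a polynomial ring in those $|E|$ free variables over $\KK[YY^T]/J_G(d)$. Hence $I_{d+1}^\KK(X_G^\symmetric)$ is radical, prime, or of height $\binom{n-d+1}{2}$ (the height of the generic symmetric determinantal ideal) if and only if $J_G(d)$ is, respectively, radical, prime, or a complete intersection of height $|E|$ in $\KK[YY^T]$. The radical and prime implications are immediate from $J_G(d)=L_G^\KK(d)\cap \KK[YY^T]$ since contraction preserves both properties. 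For the complete-intersection case: if $L_G^\KK(d)$ is a complete intersection, the $|E|$ generators of $J_G(d)$ form a regular sequence in $\KK[Y]$; applying the identity $IS\cap R=I$ (which holds for $R=\KK[YY^T]\hookrightarrow S=\KK[Y]$ because this inclusion is pure) inductively to the partial ideals shows that the same elements form a regular sequence in $\KK[YY^T]$, so $J_G(d)$ is a complete intersection of height $|E|$ there.

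The one delicate step I foresee is this purity/regular-sequence descent used in the complete-intersection case: one must verify inductively that $(f_1,\ldots,f_{k-1})\KK[Y]\cap \KK[YY^T]=(f_1,\ldots,f_{k-1})\KK[YY^T]$ and combine it with the nonzero-divisor property for $f_k$ in $\KK[Y]$ to deduce the same property in $\KK[YY^T]$. All remaining steps are immediate formal consequences of the direct summand structure provided by the Reynolds operator, and the argument is otherwise parallel, almost verbatim, to the generic bipartite case handled in \ref{prop:traG}.
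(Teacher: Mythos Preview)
Your proposal is correct and follows essentially the same approach as the paper: the paper treats \ref{traS} as a direct analogue of \ref{prop:traG}, obtained ``using the isomorphism \eqref{eq:isosym}'', and you have faithfully unpacked that sketch (the direct-summand/Reynolds argument giving $J_G(d)=L_G^\KK(d)\cap\KK[YY^T]$, and the polynomial-extension identification of $S^\symmetric/I_{d+1}^\KK(X_G^\symmetric)$ with $\KK[YY^T]/J_G(d)[x_{ij}:\{i,j\}\in E]$). Your inductive descent of the regular sequence through the pure extension for the complete-intersection case is exactly the detail the paper suppresses when it calls the proposition an ``immediate consequence''; note that $\KK[YY^T]$ is Cohen--Macaulay, so the resulting regular sequence of length $|E|$ does give $\height J_G(d)=|E|$ and hence the expected height $\binom{n-d+1}{2}$ for $I_{d+1}^\KK(X_G^\symmetric)$.
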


For $2d+2\leq n$ we may consider the coordinate section 
$\Pf^\KK_{2d+2}(X_G^\skewsymmetric)$ of $\Pf^\KK_{2d+2}(X^\skewsymmetric_n)$. 
We may as well consider the associated twisted 
LSS-ideal $\hat L_G^\KK(d)$ defined as follows. 
For every $i\in [n]$  we consider $2d$ indeterminates 
$y_{i\,1},\dots, y_{i\,2d}$. For $e = \{i,j\}$, $1\leq i<j\leq n$ we set  
$\hat{f}_{e}^{(d)}$ to be the entry of the matrix $YJY^T$ in row $i$ and
column $j$, i.e. 

$$\hat{f}_e^{(d)}=\sum_{k=1}^d \, \Big(y_{i\,2k-1}y_{j\,2k}-y_{i\,2k}y_{j\,2k-1} \Big).$$

Then we set 
 
$$\hat L_G^\KK(d)=( \hat{f}_e^{(d)}~  :~ e \in E ).$$

the twisted LSS-ideal associated to $G$.
For $d=1$ the twisted LSS-ideal coincides with the so-called binomial edge 
ideal defined and studied in \cite{HHHKR,KM,MM,O}. 

\begin{Remark} 
  Assume $G$ is bipartite with bipartition 
  $[n] = V_1 \cup V_2$ then the coordinate transformation (see \cite[Cor. 6.2]{BMS}) 

  \begin{itemize}
    \item $y_{i\,2k-1} \mapsto y_{i\,2k-1}$ and 
	  $y_{i\,2k} \mapsto y_{i\,2k}$ 
          for $i \in V_1$,
    \item $y_{j\,2k} \mapsto  y_{j\,2k-1}$ and
          $y_{j\,2k-1} \mapsto  - y_{j\,2k}$ 
          for $j \in V_2$,
  \end{itemize}

  sends $\hat L_G^\KK(d)$ to $L_G^\KK(2d)$. In particular, for a bipartite 
  graph $G$ we have that $\hat L_G^\KK(d)$ is 
  radical  (resp.~prime) if and only if $L_G^\KK(2d)$
 is  radical  (resp.~prime).
\end{Remark} 

Using the isomorphism \eqref{eq:isoskew} we obtain: 
   
\begin{Proposition} 
  \label{traA}
  Let $\KK$ be a field of characteristic $0$ and 
  $G=([n], E)$ a graph.
  If  $\hat L_G^\KK(d)$ is radical (resp.~is a complete intersection, resp.~is  prime) 
  then  $\Pf^\KK_{2d+2}(X_G^\skewsymmetric)$ is radical (resp.~has maximal height, resp.~is  prime). 
\end{Proposition}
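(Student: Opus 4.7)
The plan is to follow the same template as the proofs of \ref{prop:traG} and \ref{traS}, now using the symplectic group $\Sp_{2d}(\KK)$ and the invariant theory isomorphism \eqref{eq:isoskew}. Let $Y=(y_{ij})$ be the $n\times 2d$ matrix of variables appearing in \eqref{eq:isoskew} and let $\hat J_G(d)\subseteq \KK[YJY^T]$ be the ideal generated by the entries $(YJY^T)_{ij}$ for $\{i,j\}\in E$. Under the surjection $\phi\colon S^\skewsymmetric\to \KK[YJY^T]$ of \eqref{eq:isoskew} the variable $x_{ij}$ is sent to $(YJY^T)_{ij}=\hat f_{\{i,j\}}^{(d)}$, so
\[
\KK[YJY^T]/\hat J_G(d) \;\cong\; S^\skewsymmetric/\bigl(\Pf^\KK_{2d+2}(X_n^\skewsymmetric) + (x_{ij} : \{i,j\}\in E)\bigr).
\]
Since setting $x_{ij}=0$ for $\{i,j\}\in E$ in the generic Pfaffians yields exactly the Pfaffians of $X_G^\skewsymmetric$, this ring is further isomorphic to $T/\Pf^\KK_{2d+2}(X_G^\skewsymmetric)$, where $T=\KK[x_{ij}:\{i,j\}\notin E]$. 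By the very definition of the twisted LSS-ideal, $\hat L_G^\KK(d) = \hat J_G(d)\cdot\KK[Y]$.

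The main technical ingredient is identical to the two preceding cases: $\Sp_{2d}(\KK)$ is semisimple, hence linearly reductive in characteristic $0$, so the Reynolds operator exhibits $\KK[YJY^T]=\KK[Y]^{\Sp_{2d}(\KK)}$ as a $\KK[YJY^T]$-module direct summand of $\KK[Y]$. The standard extension/contraction argument for direct summands then gives
\[
\hat J_G(d) \;=\; \hat L_G^\KK(d)\cap \KK[YJY^T].
\]
Radicality and primality pass from an ideal to its contraction along any subring inclusion, so if $\hat L_G^\KK(d)$ is radical (resp.\ prime) then so is $\hat J_G(d)$, and hence $\Pf^\KK_{2d+2}(X_G^\skewsymmetric)$ is radical (resp.\ prime) in $T$.

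For the complete intersection statement I would argue by a dimension count that mirrors the one implicit in \ref{prop:traG}. If $\hat L_G^\KK(d)$ is a complete intersection its height equals $|E|$, so $\dim \KK[Y]/\hat L_G^\KK(d)=2nd-|E|$. The action of $\Sp_{2d}(\KK)$ on $\Spec\KK[Y]$ has generic orbits of dimension $d(2d+1)=\dim\Sp_{2d}(\KK)$, and since $\hat L_G^\KK(d)$ is generated by $\Sp_{2d}(\KK)$-invariants the same dimension count persists generically on $\Spec\KK[Y]/\hat L_G^\KK(d)$. Consequently
\[
\dim \KK[YJY^T]/\hat J_G(d) \;=\; 2nd-|E|-d(2d+1) \;=\; \binom{n}{2}-\binom{n-2d}{2}-|E|,
\]
which coincides with $\dim T-\binom{n-2d}{2}$. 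Since $\binom{n-2d}{2}$ is the maximal possible height of $\Pf^\KK_{2d+2}(X_G^\skewsymmetric)$ in $T$ by the generic bound, this ideal attains its maximal height.

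The step I expect to require the most care is the generic orbit dimension computation on $\Spec \KK[Y]/\hat L_G^\KK(d)$: one has to certify that the dimension drop of $d(2d+1)$ when passing to the invariant ring is genuinely attained and not just an upper bound. This reduces to exhibiting a closed point of $\Spec\KK[Y]/\hat L_G^\KK(d)$ at which the $\Sp_{2d}(\KK)$-stabilizer is finite, for instance one represented by a $Y$ of full column rank $2d$ whose product $YJY^T$ has the prescribed zero pattern; such a point exists once $2d\leq n$ by a standard fibre dimension argument based on the complete intersection hypothesis.
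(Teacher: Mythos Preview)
Your treatment of the radical and prime cases is correct and matches the paper's approach exactly.

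For the complete intersection case you are working harder than necessary, and the final paragraph does not actually close the gap it identifies. The clean argument --- what the paper means when it calls the parallel \ref{prop:traG} an ``immediate consequence'' --- is purely algebraic: the generators $\hat f_e^{(d)}$ already lie in the subring $R=\KK[YJY^T]$, and the Reynolds operator makes $R$ an $R$-module direct summand of $S=\KK[Y]$. Whenever $R$ is such a direct summand of $S$ and $g_1,\dots,g_r\in R$ form a regular sequence in $S$, they form one in $R$ as well, because the $R$-linear retraction $\rho\colon S\to R$ descends to give injections $R/(g_1,\dots,g_{i-1})R\hookrightarrow S/(g_1,\dots,g_{i-1})S$ for each $i$, and $g_i$ being a nonzerodivisor on the target forces it on the source. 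Hence $\height\,\hat J_G(d)=|E|$ in $R$, and your numerical identity (and with it the maximal-height conclusion for $\Pf^\KK_{2d+2}(X_G^\skewsymmetric)$) follows without any orbit analysis.

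The specific problem with your stabilizer approach is that exhibiting a single closed point $Y_0$ of $\Spec\KK[Y]/\hat L_G^\KK(d)$ with finite $\Sp_{2d}$-stabilizer only controls the generic orbit dimension on the irreducible component through $Y_0$. Since $\hat L_G^\KK(d)$ is merely a complete intersection and not assumed prime, other components could lie entirely inside the locus $\{\rank Y<2d\}$, where every stabilizer is positive-dimensional; on such a component the quotient by $\Sp_{2d}$ would a priori have dimension exceeding $2nd-|E|-d(2d+1)$, and your bound would fail. The geometric route can be repaired by instead invoking upper semicontinuity of fibre dimension for the full quotient map $\Spec\KK[Y]\to\Spec\KK[YJY^T]$ between irreducible varieties (so that \emph{every} fibre has dimension at least $d(2d+1)$), but the direct-summand argument above is both shorter and what the paper intends.
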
 
  
Now, in characteristic $0$, the results that we have established for LSS-ideals can be turned 
into statements concerning coordinate sections of determinantal ideals.   

\begin{Theorem}
  \label{pro}
  Let $\KK$ be a field of characteristic $0$. 
  \begin{itemize} 
     \item[(1)] For every subgraph $G$  of $K_{m,n}$  
      the ideals $I_2^\KK(X_G^\generic)$ and $I_3^\KK(X_G^\generic)$ are radical. 
     \item[(2)] For every subgraph $G$  of  $K_n$   the ideals 
       $I_2^\KK(X_G^\symmetric)$ and $I_3^\KK(X_G^\symmetric)$ are radical. 
     \item[(3)] For every subgraph $G$  of  $K_n$  the ideal 
       $\Pf^\KK_4(X_G^\skewsymmetric)$ is radical. 
  \end{itemize} 
  Furthermore if $G$ is a forest then  
  \begin{itemize} 
     \item[(4)] $I_d^\KK(X_G^\generic), I_d^\KK(X_G^\symmetric)$ and 
       $\Pf^\KK_{2d}(X_G^\skewsymmetric)$  are  radical for all $d$. 
     \item[(5)] $I_d^\KK(X_G^\generic)$ and $I_d^\KK(X_G^\symmetric)$ have 
       maximal height if $d\geq \Delta(G)+1$. 
     \item[(6)] $I_d^\KK(X_G^\generic)$ and $I_d^\KK(X_G^\symmetric)$ are
       prime if  $d\geq \Delta(G)+2$. 
  \end{itemize}
\end{Theorem}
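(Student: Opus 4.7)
The plan is to apply the transfer propositions \ref{prop:traG}, \ref{traS}, and \ref{traA} to deduce algebraic properties of the coordinate sections of the determinantal and Pfaffian ideals from the corresponding properties of LSS-ideals (and twisted LSS-ideals) established in earlier sections. The heavy lifting has already been done; what remains is a short dictionary between the LSS-parameter $d$ and the size of the minors or Pfaffians, modulo an index shift.

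For parts (1) and (2), the ideal of $2$-minors corresponds under \ref{prop:traG} and \ref{traS} to $L_G^\KK(1)$, which is generated by the squarefree monomials $y_{i,1}y_{j,1}$ for $\{i,j\}\in E$ and is therefore trivially radical. The ideal of $3$-minors corresponds to $L_G^\KK(2)$, which is radical in characteristic $0$ by \ref{HMMW}. For part (3), the ideal $\Pf^\KK_4(X_G^\skewsymmetric)$ corresponds via \ref{traA} to $\hat L_G^\KK(1)$; its generators $y_{i,1}y_{j,2}-y_{i,2}y_{j,1}$ form the binomial edge ideal of $G$, known to be radical by \cite{HHHKR}.

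For parts (4)--(6) I would invoke \ref{thm:forest}, which determines exactly when $L_G^\KK(d)$ is radical, a complete intersection, or prime for a forest $G$. For (4), \ref{thm:forest}(1) supplies radicality of $L_G^\KK(d-1)$ for all $d\geq 2$, and the transfers yield radicality of $I_d^\KK(X_G^\generic)$ and $I_d^\KK(X_G^\symmetric)$; the base case $d=1$ is trivial since $I_1$ is generated by variables. For the Pfaffian case, every forest is bipartite, so the coordinate change preceding \ref{traA} identifies $\hat L_G^\KK(d-1)$ with $L_G^\KK(2d-2)$, again radical by \ref{thm:forest}(1) (and the case $d=1$ is trivial since $\Pf_2$ is generated by the non-zero entries). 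Parts (5) and (6) follow by the same template applied to \ref{thm:forest}(2) and \ref{thm:forest}(3): $L_G^\KK(d-1)$ is a complete intersection precisely when $d\geq \Delta(G)+1$ and prime precisely when $d\geq \Delta(G)+2$, and these properties transfer via \ref{prop:traG} and \ref{traS} to maximal height and primeness of the coordinate section.

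The argument is essentially bookkeeping and presents no serious obstacle. The only points requiring a bit of care are keeping straight the systematic index shift between the LSS-parameter and the size of the minors or Pfaffians, and noticing that a forest is automatically bipartite, which is exactly what is needed for the Pfaffian transfer to pass through the coordinate change identification of $\hat L_G^\KK(d-1)$ with $L_G^\KK(2d-2)$.
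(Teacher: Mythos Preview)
Your proof is correct and follows essentially the same route as the paper. The only minor difference is in the Pfaffian case of (4): the paper applies \ref{CaSt} directly to the twisted ideal $\hat L_G^\KK(d)$, whereas you exploit that a forest is bipartite to identify $\hat L_G^\KK(d-1)$ with $L_G^\KK(2d-2)$ and then invoke \ref{thm:forest}(1); both arguments are valid and ultimately rest on \ref{CaSt}.
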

  
\begin{proof} 
  The statements for  ideals of $2$-minors   follow from \ref{prop:traG} and \ref{traS} using the fact 
  that the edge ideal of a graph is radical. Indeed  these results hold over 
  a field of arbitrary characteristic as the corresponding ideals are 
  ``toric.'' 

  By \cite[Thm. 1.1]{HMMW} the ideal $L_G^\KK(2)$ is radical for all graphs 
  $G$. Using \ref{prop:traG} and \ref{traS} this implies that 
  $I_3^\KK(X_G^\generic)$ is radical for bipartite graphs $G$ and 
  $I_3^\KK(X_G^\symmetric)$ is radical for  arbitrary graphs.
    
  By \cite[Cor. 2.2]{HHHKR} the ideal $\hat L_G^\KK(1)$ is radical for all 
  graphs $G$. Using \ref{traA} this implies that 
  $\Pf^\KK_4(X_G^\skewsymmetric)$ is radical for arbitrary graphs.
  
  Finally, for a forest $G$ the results in the case of minors are derived 
  from \ref{prop:traG}, \ref{traS} and \ref{thm:forest}. In the Pfaffian case
  they follow using \ref{CaSt} and \ref{traA}.
\end{proof}

The following corollary is an immediate consequence of assertion  
(3) in \ref{pro}.

\begin{Corollary}
  \label{cor:grass}
  Let $G(2,n)$ be the coordinate ring of the Grassmannian of       
  $2$-dimensional subspaces in $\KK^n$ in its standard Pl\"ucker 
  coordinates. Then any subset of the Pl\"ucker coordinates generates a 
  radical ideal in $G(2,n)$. 
\end{Corollary}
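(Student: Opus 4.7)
The plan is to identify the coordinate ring of $G(2,n)$ with a Pfaffian quotient so that the assertion becomes a direct application of part (3) of \ref{pro}.

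First, I would recall the classical fact that the Plücker embedding realizes $G(2,n)$ as $S^\skewsymmetric/\Pf_4^\KK(X_n^\skewsymmetric)$: identifying each Plücker coordinate $p_{ij}$ (for $1\le i<j\le n$) with the variable $x_{ij}$ above the diagonal of the generic skew-symmetric matrix $X_n^\skewsymmetric$, the Plücker relations for $G(2,n)$ coincide with the $4$-Pfaffians of $X_n^\skewsymmetric$. This is because a skew-symmetric matrix has rank at most $2$ precisely when all its $4$-Pfaffians vanish, and these Pfaffians generate the full ideal of relations.

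Next, any subset of the Plücker coordinates corresponds to a set $E\subseteq \binom{[n]}{2}$ and hence to a graph $G=([n],E)$ that is a subgraph of $K_n$. The ideal generated by $\{p_{ij}:\{i,j\}\in E\}$ in $G(2,n)$ is radical if and only if its preimage
$$\Pf_4^\KK(X_n^\skewsymmetric) + (x_{ij}:\{i,j\}\in E) \subseteq S^\skewsymmetric$$
is radical. Substituting $x_{ij}=0$ for each $\{i,j\}\in E$ inside the Pfaffians of $X_n^\skewsymmetric$ yields exactly the Pfaffians of $X_G^\skewsymmetric$, so this ideal equals
$$\Pf_4^\KK(X_G^\skewsymmetric) + (x_{ij}:\{i,j\}\in E).$$

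To finish, I would apply part (3) of \ref{pro}, which states that $\Pf_4^\KK(X_G^\skewsymmetric)$ is radical for every subgraph $G$ of $K_n$. Writing $S^\skewsymmetric = T[x_{ij}:\{i,j\}\in E]$ with $T=\KK[x_{ij}:\{i,j\}\notin E]$, the generators of $\Pf_4^\KK(X_G^\skewsymmetric)$ lie in $T$, so this ideal is extended from an ideal $I'\subset T$. Radicalness in the polynomial extension $T\subset S^\skewsymmetric$ transfers in both directions, so $I'$ is radical in $T$, and therefore
$$S^\skewsymmetric/\bigl(I'S^\skewsymmetric + (x_{ij}:\{i,j\}\in E)\bigr) \;\cong\; T/I'$$
is reduced, giving the claim. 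There is no genuine obstacle here: the entire argument reduces to (i) the classical description of $G(2,n)$ as a Pfaffian variety and (ii) the invocation of part (3) of \ref{pro}, whose content has already been established in characteristic $0$ via the invariant-theoretic bridge to $\hat L_G^\KK(1)$.
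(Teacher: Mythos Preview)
Your proposal is correct and follows exactly the route the paper intends: the paper simply states that the corollary is an immediate consequence of part (3) of \ref{pro}, and you have spelled out precisely the identification $G(2,n)\cong S^\skewsymmetric/\Pf_4^\KK(X_n^\skewsymmetric)$ and the passage from $\Pf_4^\KK(X_n^\skewsymmetric)+(x_{ij}:\{i,j\}\in E)$ to $\Pf_4^\KK(X_G^\skewsymmetric)+(x_{ij}:\{i,j\}\in E)$ that the paper leaves implicit. There is no substantive difference between your argument and the paper's.
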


A statement analogous to \ref{cor:grass} for higher order Grassmannians is 
not true. Indeed, the point is that a set of $m$-minors of a generic matrix 
$m\times n$ does not generate a radical ideal in general (as it does for 
$m=2$). For example, in the Grassmannian $G(3,6)$ modulo $[123], [124], 
[135], [236]$ the class of $[125][346]$ is a non-zero nilpotent.   
  
Next we look into necessary conditions for $I_d^\KK(X_G^\generic)$ and 
$I_d^\KK(X_G^\symmetric)$ to be prime.

\begin{Lemma}
  \label{lem:minorKab}
  Let $G = ([n],G)$ be a graph. 
  \begin{itemize}
    \item[(1)] If $I_{d+1}^\KK(X_G^\symmetric)$ is prime then $G$ does not  
      contain $K_{a,b}$ for $a+b > d$ (i.e. $\bar{G}$ is $(n-d)$-connected).

     \item[(2)] If $G=B_d$ with $d\geq 4$ and $X$ is the generic $(d+2)\times (d+2)$ 
      matrix then $I_{d+1}^\KK(X_G^\generic)$  is not prime. 
  \end{itemize}
\end{Lemma}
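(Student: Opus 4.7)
For part (1), the plan is to adapt Proposition \ref{obsKab} from LSS ideals to symmetric determinantal ideals. Suppose $I_{d+1}^\KK(X_G^\symmetric)$ is prime and $G$ contains $K_{a,b}$ with $a+b > d$; by restricting to a smaller complete bipartite subgraph we may assume $a+b = d+1$ with $a,b \geq 1$. Let $A, B \subseteq [n]$ be the two vertex classes of the bipartition, with $|A|=a$, $|B|=b$, $A\cap B =\emptyset$. Since every edge of $K_{a,b}$ lies in $E(G)$, the off-diagonal blocks of the $(d+1)\times(d+1)$ principal submatrix $M := X_G^\symmetric[A\cup B, A\cup B]$ vanish, so $M$ is block-diagonal with diagonal blocks $X_G^\symmetric[A,A]$ and $X_G^\symmetric[B,B]$. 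Hence the $(d+1)$-minor $\det M = \det X_G^\symmetric[A,A]\cdot \det X_G^\symmetric[B,B]$ lies in $I_{d+1}^\KK(X_G^\symmetric)$. Each factor is homogeneous of degree at most $d$ and non-zero as a polynomial, because the monomial $\prod_{i\in A} x_{ii}$ (respectively $\prod_{i\in B} x_{ii}$) arises from the identity permutation in the Leibniz expansion and cannot cancel (the diagonal variables of a symmetric matrix appear only on the diagonal). Since $I_{d+1}^\KK(X_G^\symmetric)$ is homogeneous and generated in degree $d+1$, it contains no non-zero element of smaller degree, so neither factor lies in the ideal, contradicting primality.

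For part (2), view $B_d$ as a subgraph of $K_{d+2,d+2}$ with two isolated vertices on each side; then $X_{B_d}^\generic$ has the block form $\begin{pmatrix} D & V \\ W & Y \end{pmatrix}$, with $D = \operatorname{diag}(x_{11},\ldots,x_{dd})$ a $d \times d$ diagonal matrix and $V$, $W$, $Y$ blocks of independent variables of sizes $d \times 2$, $2 \times d$, $2 \times 2$. The aim is to show reducibility of the scheme $V(I_{d+1}^\KK(X_{B_d}^\generic))$. On the open stratum $U_0 = \{\prod_i x_{ii}\neq 0\}$, the Schur-complement identity reduces $\rank(X_{B_d}^\generic) \leq d$ to the vanishing of the four entries of $Y - WD^{-1}V$, which, after clearing denominators, are four specific $(d+1)$-minors (indexed by rows $[d]\cup\{r\}$ and columns $[d]\cup\{c\}$, $r,c \in \{d+1,d+2\}$); their vanishing locus in $U_0$ is irreducible of codimension $4$, giving one irreducible component $\overline{U_0}$ of dimension $5d$.

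The heart of the argument will be to produce a second irreducible component supported on the degeneration locus $\bigcup_{i\in[d]} \{x_{ii} = 0\}$ that is not absorbed into $\overline{U_0}$: on such a stratum the $i$-th row and column become sparse (only the entries in positions $d+1$, $d+2$ are free), allowing rank-$\leq d$ matrices that do not arise as specializations of points of $U_0$. A dimension count for $d \geq 4$ should show that this gives a component of full dimension and distinct from $\overline{U_0}$, while for smaller $d$ the strata collapse, explaining the hypothesis. The concrete verification amounts to exhibiting a witness $g$, in the spirit of Examples \ref{ex:nrad} and \ref{ex:4mnotrad}, with $I_{d+1}^\KK(X_{B_d}^\generic) : g \neq I_{d+1}^\KK(X_{B_d}^\generic) : g^2$; a natural candidate is a $d$-minor of the horizontal stack $(D \mid V)$ that vanishes on $\overline{U_0}$ but not on the degeneration component. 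The base case $d = 4$ can be handled by computer algebra, and an inductive specialization compatible with the embedding $B_d \hookrightarrow B_{d+1}$ should propagate the result to all $d \geq 4$.
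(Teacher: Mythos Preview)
Your argument for part (1) is correct and matches the paper's proof: the principal $(d+1)\times(d+1)$ submatrix on $A\cup B$ is block-diagonal, its determinant factors nontrivially, and degree considerations show neither factor lies in the ideal.

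For part (2), however, what you have written is a plan rather than a proof, and the plan misses the key observation that makes the argument short. You correctly identify one component $\overline{U_0}$ of codimension $4$ via the Schur complement, but then every remaining step is phrased as ``should'': the dimension count for the second component is not carried out, the witness $g$ is not produced, and the inductive passage from $B_d$ to $B_{d+1}$ is only asserted. None of these is routine enough to leave unchecked.

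The paper's proof avoids all of this with a direct containment. After setting $x_{11}=x_{22}=x_{33}=x_{44}=0$, the first four rows of $X_{B_d}^{\generic}$ have nonzero entries only in the last two columns, so they span at most a $2$-dimensional space; hence the entire matrix has rank at most $2+(d+2-4)=d$, which gives
\[
I_{d+1}^\KK(X_{B_d}^\generic)\ \subseteq\ (x_{11},x_{22},x_{33},x_{44}).
\]
This exhibits concretely the ``second component'' you were looking for: the linear space $V(x_{11},x_{22},x_{33},x_{44})$ sits entirely inside the determinantal variety. Combined with the fact that $I_{d+1}^\KK(X_{B_d}^\generic)$ has height exactly $4$ (proved by induction on $d$ via the standard localization trick at $x_{11}$, with the base cases $d\le 3$ coming from primality of the corresponding LSS-ideal), this forces $(x_{11},x_{22},x_{33},x_{44})$ to be a minimal prime of $I_{d+1}^\KK(X_{B_d}^\generic)$. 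Since the ideal is generated in degree $d+1\ge 5$ it cannot equal this linear prime, so it is not prime. Your Schur-complement description of $\overline{U_0}$ is compatible with this picture (it gives another minimal prime of height $4$), but the decisive step you are missing is the simple rank argument above, which removes any need for a witness, computer verification, or induction on $d$.
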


\begin{proof} 
  \begin{itemize}
     \item[(1)] 
        Assume by contradiction that $G$ contains $K_{a,b}$ for $a+b=d+1$. We may assume 
        that the corresponding set of vertices are $[a]$ and $\{a+j : j \in [b] \}$.  
        But then the submatrix of $X_G^\symmetric$ of the first $d+1$ rows and columns  
        is block-diagonal with (at least) two blocks. 
        Hence its determinant is non-zero, is reducible and has degree $d+1$.  Since all the 
        generators of $I_{d+1}^\KK(X_G^\symmetric)$ have degree $d+1$  it follows that 
        $I_{d+1}^\KK(X_G^\symmetric)$ cannot be prime.

     \item[(2)] Set $Y_d=X_{B_d}^{\generic}$, i.e.,   $$Y_d= 
          \left( 
          \begin{array}{cccccccc}
           x_{11}    & 0      & \cdots & 0     & x_{1,d+1} &  x_{1,d+2} \\ 
                0    & x_{22}   & \cdots & 0        & x_{2,d+1} &  x_{2,d+2} \\ 
             \vdots  & \vdots & \vdots & \vdots & \vdots  & \vdots\\ 
                0    & \cdots &      0 & x_{dd} &   \vdots & \vdots \\
          x_{d+1,1}  &  x_{d+1,2} &  \cdots &  \cdots  & x_{d+1,d+1}  & x_{d+1,d+2} \\
          x_{d+2,1}  &  x_{d+2,2} &  \cdots  &   \cdots & x_{d+2,d+1}  &  x_{d+2,d+2}
          \end{array} \right).
        $$ 
        and $J=I_{d+1}(Y_d)$ and let $S$ be the polynomial ring whose 
	indeterminates are the non-zero entries of $Y_d$. 
        First, we prove that for every $d\geq 1$ the ideal $J$ has the 
        expected height, i.e. $\height\,J=4$. For $d=1,2,3$ the ideal $J$  
        is indeed prime of height $4$: for $d=1$ this is obvious because 
        $Y_1$ is the generic $3\times 3$ matrix; for $d=2$ and $d=3$ it 
        follows from the fact that the corresponding LSS-ideal is prime by 
        virtue of \ref{prop:traG}. 
        For $d>3$ let $P$ be a prime containing $J$. If $P$ contains 
        $(x_{11},x_{22},x_{33},x_{44})$ then $\height\,P\geq 4$. If $P$ does 
        not contain $(x_{11},x_{22},x_{33},x_{44})$ we may assume 
	$x_{11}\not\in P$. Inverting $x_{11}$ and using the standard 
        localization trick for determinantal ideals one sees that 
        $PS_{x_{11}}$ contains, up to a change of variables, $I_d(Y_{d-1})$.  
	Hence $\height\,P=\height\, PS_x\geq 4$. 
        Now that we know that $J$ has height $4$ to prove that $J$ is not 
	prime for $d\geq 4$ it is enough to observe that 
        $J\subset (x_{11},x_{22},x_{33},x_{44})$. The latter is  
        straightforward since mod $(x_{11},x_{22},x_{33},x_{44})$ 
        the submatrix of $Y$ consisting of the first $4$-rows has rank $2$. 
  \end{itemize}
\end{proof}

\section{Obstructions to algebraic properties and asymptotic behavior}
\label{sec:obstruction}

In this section we return to the study of LSS-ideals $L_G^\KK(d)$. Using results from 
	\ref{sec:stabilization} and results about $I_{d+1}(X^\generic_{B_d})$ from \ref{sec:minorgraph} we derive necessary conditions for
	$L_G^\KK(G)$ to be a complete intersections or prime. In addition, we discuss the exact asymptotic behavior of these properties
	for complete and complete bipartite graphs. 
	To this end it is convenient to introduce the following notation.  Given an algebraic property $\pP$ of ideals and a 
	graph $G$ we set  

	$$\asym_\KK(\mathcal P,G)=\inf\{ d~ :~ L_G^\KK(d') \mbox{ has property } \mathcal P  \mbox{ for all } d'\geq d\}.$$

	Here we  interested in the properties 
	$\pP \in \{ \rad,\ci,\prim\}$.   
	By \ref{thm:prime_and_ci}m \ref{cor:prime_and_ci} and \ref{thm:pmd} we know that for every graph $G$ we have
	\begin{eqnarray*}
	 \asym_\KK(\ci  ,G)=\min\{ d :    L_G^\KK(d) \mbox{ is } \ci \}\leq \pmd(G) \\
	 \asym_\KK(\prim  ,G)=\min\{ d :    L_G^\KK(d) \mbox{ is } \prim \}\leq \pmd(G)+1 \\ 
	 \asym_\KK(\ci  ,G)\leq \asym_\KK(\prim  ,G)\leq  \asym_\KK(\ci  ,G)+1
	\end{eqnarray*} 

	Furthermore there are graphs such that  $\asym_\KK(\prim  ,G)= \asym_\KK(\ci  ,G)+1$ (e.g.  odd cycles or forests) 
	and others such that $\asym_\KK(\prim  ,G)= \asym_\KK(\ci  ,G)$  (e.g. even cycles). 
	We have the following obstructions: 

	\begin{Proposition}
	  \label{obstructions1}
	  Let $G=([n], E)$. Then: 
	  \begin{itemize}
	    \item[(1)] If $L_G^\KK(d)$ is prime then $G$ does not contain   $K_{a,b}$ with $a+b=d+1$. 
	      Furthermore, if $d>3$ and $\chara \KK=0$ then $G$ does not contain $B_d$. 
	    \item[(2)] If $L_G^\KK(d)$ is a complete intersection then $G$ does not 
	      contain  $K_{a,b}$ with $a+b=d+2$. 
	      Furthermore, if $d>2$ and 
	      $\chara \KK=0$  then $G$ does not contain $B_{d+1}$.
	  \end{itemize}
	\end{Proposition}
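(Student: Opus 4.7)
The plan is to derive each of the four assertions by combining the structural results established earlier in the paper, and then reduce the two non-trivial cases to \ref{obsKab} and to \ref{lem:minorKab}.

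The first halves of (1) and (2) are essentially immediate. The clause of (1) saying that $G$ omits $K_{a,b}$ with $a+b=d+1$ is just the case $a+b=d+1$ of \ref{obsKab}. For the first clause of (2), assume $L_G^\KK(d)$ is a complete intersection; by \ref{thm:prime_and_ci}(2) the ideal $L_G^\KK(d+1)$ is prime, and then \ref{obsKab} applied at level $d+1$ shows that $G$ cannot contain $K_{a,b}$ with $a+b\geq d+2$, which covers the case $a+b=d+2$.

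For the $B_d$-obstruction in (1), suppose $\chara\KK=0$, that $L_G^\KK(d)$ is prime with $d\geq 4$, and, for a contradiction, that $G$ contains $B_d$. By \ref{cor:prime_and_ci}(2) the ideal $L_{B_d}^\KK(d)$ is prime. Now view $B_d$ as a subgraph of $K_{d+2,d+2}$ by adjoining two isolated vertices to each part; this enlarges the ambient polynomial ring only by variables that do not occur in any generator of $L_{B_d}^\KK(d)$, i.e.\ corresponds to tensoring with a polynomial ring over $\KK$, so primeness is preserved. Applying \ref{prop:traG} to this embedded graph yields that $I_{d+1}^\KK(X_{B_d}^\generic)$ is prime, where now $X_{B_d}^\generic$ is precisely the $(d+2)\times(d+2)$ coordinate section considered in \ref{lem:minorKab}(2). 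That lemma asserts that this ideal fails to be prime when $d\geq 4$, giving the desired contradiction.

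For the $B_{d+1}$-obstruction in (2), assume $\chara\KK=0$, that $L_G^\KK(d)$ is a complete intersection, and that $d>2$. By \ref{thm:prime_and_ci}(2) the ideal $L_G^\KK(d+1)$ is prime, and since $d+1>3$ the obstruction from (1) applied at level $d+1$ forces $G$ to omit $B_{d+1}$.

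The main obstacle is the bookkeeping in the $B_d$-step of (1): one must match the LSS-ideal of the $2d$-vertex graph $B_d$ with the determinantal ideal of a $(d+2)\times(d+2)$ coordinate section, which is done by regarding $B_d$ as a subgraph of $K_{d+2,d+2}$ with four isolated extra vertices and then invoking the characteristic-zero invariant-theoretic bridge \ref{prop:traG}. Everything else is a direct application of \ref{thm:prime_and_ci}, \ref{cor:prime_and_ci} and \ref{obsKab}.
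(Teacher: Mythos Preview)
Your proof is correct and follows essentially the same route as the paper's own argument: reduce to \ref{obsKab} and \ref{lem:minorKab}(2) via \ref{cor:prime_and_ci}(2) and \ref{prop:traG}, and derive (2) from (1) through \ref{thm:prime_and_ci}(2). Your explicit discussion of embedding $B_d$ into $K_{d+2,d+2}$ (adding four isolated vertices so that \ref{prop:traG} yields a statement about the $(d+2)\times(d+2)$ matrix appearing in \ref{lem:minorKab}(2)) makes precise a point the paper handles more tersely with the phrase ``for a generic matrix $X$ of arbitrary size.''
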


	\begin{proof} 
	 \begin{itemize}
	   \item[(1)] The first assertion has been already proved in \ref{obsKab}. For the second let 
	      $\chara \KK = 0$ and $d > 3$. By contradiction, assume $G$ contains $B_d$. Then by \ref{cor:prime_and_ci}  
	      we know that $L_{B_d}^\KK(d)$ is prime because $L_G^\KK(d)$ is prime.     
	      Then \ref{prop:traG} implies that $I_{d+1}(X^\generic_{B_d})$ is prime for a  
	      generic matrix $X$ of arbitrary size and this contradicts  
	      \ref{lem:minorKab}(2).

	   \item[(2)] Assertion (2) follows from (1) by using \ref{thm:prime_and_ci}. 
	 \end{itemize}
	 \end{proof}

	 Another obstruction is described in the following proposition.

	\begin{Proposition}
	  \label{knprci} 
	  Let $\KK$ be a field of characteristic $0$ and $n\in \NN$. 
	  Let $w_n$ be  the largest positive integer such that ${w_n \choose 2} 
	  \leq n$. Then: 
	  \begin{itemize}
	    \item[(1)]  $L^\KK_{K_n}(d)$ is not prime for $d=n+{w_n-2 \choose 2}-1$. 
	    \item[(2)]  $L^\KK_{K_n}(d)$ is not a complete intersection for    
		      $d=n+{w_{n+1}-2 \choose 2}-2 $. 
	  \end{itemize}
	\end{Proposition}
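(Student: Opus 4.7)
My plan is to prove both parts by contradiction, combining the stabilization result \ref{thm:prime_and_ci}, the subgraph propagation \ref{cor:prime_and_ci}(2), and the invariant-theoretic transfer \ref{traS}/\ref{prop:traG} from Section~\ref{sec:minorgraph}, applied to a subgraph of $K_n$ constructed from $K_{w_n}$.

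For part~(1), I assume $L^\KK_{K_n}(d)$ is prime at $d = n + \binom{w_n - 2}{2} - 1$. By \ref{cor:prime_and_ci}(2), every subgraph $H\subseteq K_n$ then satisfies that $L^\KK_H(d)$ is prime. The hypothesis $\binom{w_n}{2}\le n$ guarantees that a copy of $K_{w_n}$ embeds into $K_n$, and the additive shape $n + \binom{w_n-2}{2} - 1$ of the bound suggests a two-block construction: I take $H$ to be built from a $K_{w_n}$-clique plus $\binom{w_n-2}{2}$ auxiliary edges distributed among the remaining $n - w_n$ vertices so that, after passing to the coordinate section $I_{d+1}^\KK(X_H^{\symmetric})$ via the isomorphism $S^\symmetric/I_{d+1}^\KK(X_n^\symmetric)\simeq \KK[YY^T]$, the matrix $X_H^{\symmetric}$ contains a $B_d$-patterned block of the type treated in \ref{lem:minorKab}(2). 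Exhibiting a nontrivial factorization of the $(d+1)$-minors of such a block forces $I_{d+1}^\KK(X_H^{\symmetric})$ not to be prime, hence $L^\KK_H(d)$ not to be prime by \ref{traS}, contradicting our assumption. As an alternative route for smaller cases, an explicit witness $g$ with $L_H^\KK(d):g\neq L_H^\KK(d):g^2$, in the spirit of \ref{ex:nrad}, can be produced from a minor of the $n\times d$ ambient $y$-matrix supported on rows indexed by the ``clique part'' of $H$.

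For part~(2) I invoke the contrapositive of \ref{thm:prime_and_ci}(2): if $L^\KK_{K_n}(d+1)$ is not prime, then $L^\KK_{K_n}(d)$ is not a complete intersection. With $d+1 = n + \binom{w_{n+1}-2}{2}-1$, the non-primeness conclusion is delivered by part~(1) in the case $w_n = w_{n+1}$ (when $n+1$ is not triangular). In the complementary case $w_{n+1} = w_n + 1$ (when $n+1$ is triangular, i.e.~$\binom{w_n+1}{2}=n+1$), I rerun the construction of part~(1) with $w_n$ replaced by $w_{n+1}$, using the enlarged clique $K_{w_{n+1}}$; the construction fits inside $K_n$ since $w_{n+1}+1\le n$ in the relevant range, and a direct check handles the small boundary values.

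The main obstacle is the explicit construction of $H$ and the accompanying determinantal factorization in step~1: since the bound $d = n + \binom{w_n-2}{2}-1$ is sharp, any loose choice of $H$ will miss the borderline configuration. A careful combinatorial accounting is required to match the edge count $\binom{w_n}{2}+\binom{w_n-2}{2}$ of $H$ with the precise parameters in \ref{lem:minorKab}(2), and to verify that the resulting minor factorization actually witnesses non-primeness at the sharp value of $d$. The triangular case for part~(2) additionally requires verifying that the enlarged construction remains inside $K_n$ without exceeding its vertex count.
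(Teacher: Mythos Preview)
Your approach has a genuine gap: the subgraph $H$ you describe is never concretely constructed, and the appeal to \ref{lem:minorKab}(2) is misplaced. That lemma concerns the \emph{generic} (bipartite) matrix $X^{\generic}_{B_d}$, not the symmetric matrix, and there is no evident way to produce a $B_d$-patterned block inside $X_H^{\symmetric}$ for any subgraph $H$ of $K_n$ built from a clique plus a few edges. The phrase ``auxiliary edges distributed among the remaining $n-w_n$ vertices'' does not by itself yield a zero pattern that forces a reducible $(d+1)$-minor, and the fallback ``witness'' argument is not a proof.

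The paper's mechanism is quite different and bypasses subgraphs entirely. One works with $K_n$ itself but regards it as a graph on a \emph{larger} vertex set $[m_n]$ with $m_n=w_n+d-1$, adding isolated vertices. This freedom in the ambient size is the crucial move you are missing. In the resulting $m_n\times m_n$ symmetric matrix $X_{K_n}^{\symmetric}$, all off-diagonal entries in the $[n]\times[n]$ block are already zero (they correspond to edges of $K_n$); killing the first $h_n=\binom{w_n}{2}$ diagonal variables $x_{11},\dots,x_{h_nh_n}$ then creates an $h_n\times n$ zero block, forcing the rank to be at most $(m_n-h_n)+(m_n-n)=d$. Hence $I_{d+1}^\KK(X_{K_n}^{\symmetric})\subset(x_{11},\dots,x_{h_nh_n})$, a prime of height $h_n$. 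If $L^\KK_{K_n}(d)$ were prime, \ref{traS} (together with \ref{thm:prime_and_ci}(1)) would force $I_{d+1}^\KK(X_{K_n}^{\symmetric})$ to be prime of height exactly $h_n$, which is impossible given the strict containment.

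For part~(2) your reduction to (1) via the contrapositive of \ref{thm:prime_and_ci}(2) works cleanly only when $w_{n+1}=w_n$. In the triangular case you propose to ``rerun the construction with $w_{n+1}$'', but since your part~(1) construction is not actually in hand, this is circular. The paper instead argues (2) directly and in parallel to (1): with $h_n=\binom{w_{n+1}}{2}$ and $m_n=w_{n+1}+d-1$, one shows $I_{d+1}^\KK(X_{K_n}^{\symmetric})\subset(x_{11},\dots,x_{h_n-1,h_n-1})$, an ideal of height $h_n-1$, contradicting the maximal-height conclusion of \ref{traS} that would follow from $L^\KK_{K_n}(d)$ being a complete intersection.
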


	\begin{proof} 
	\begin{itemize}
	  \item[(1)] We set $h_n={w_n \choose 2}$ and $m_n = w_n+d-1$. 
	    The numbers are chosen so that, using the formulas for the height of determinantal ideals mentioned 
	    in \ref{sec:minorgraph}, the ideal $I_{d+1}(X)$ of $(d+1)$-minors 
	    of a generic symmetric $m_n \times m_n$ matrix $X$ has height $h_n$. 
	    Consider $K_n$ as the graph $([m_n],{[n] \choose 2})$ 
	    on $m_n$ vertices where the vertices $n+1,\ldots, m_n$ do not 
	    appear in edges. Assume, by contradiction, that the ideal $L^\KK_{K_n}(d)$ is 
	    prime. Then by \ref{traS} the ideal  $I_{d+1}^\KK(X_{K_n}^\symmetric)$ 
	    is prime and of height $h_n$. But one has 
	 
	    \begin{eqnarray} 
	      \label{eq:test}
	      I_{d+1}^\KK(X_{K_n}^\symmetric) \subset (x_{11},x_{22},\ldots, x_{h_nh_n})  
	    \end{eqnarray} 	

	    which is a contradiction. To check \eqref{eq:test} it is 
	    enough to prove that the rank of the matrix 
	    $$X_{K_n}^\symmetric \mod (x_{11},x_{22}, \ldots , x_{h_nh_n})$$   
	    is at most $d$. That is, we have to check that  the rank of an 
	    $(m_n \times m_n)$-matrix 
	    with block decomposition 
	    $$
	      \left(
	      \begin{array}{cc}
		0 & A \\
		B & C 
	      \end{array}
	      \right)
	    $$

	    where $0$ is the zero matrix of size $(h_n \times n)$, is at most $d$. 
	    Since $d=m_n-n+m_n-h_n$ the latter is obvious.  

	  \item[(b)] We set $h_n={w_{n+1} \choose 2}$ and $m_n = w_{n+1}+d-1$. 
	    As above, the numbers are chosen so that  the ideal 
	    $I_{d+1}(X)$ of $(d+1)$-minors of a generic 
	    symmetric $m_n \times m_n$ matrix $X$ has height $h_n$. 
		
	    Assume, by contradiction, that  $L_{K_n}^\KK(d)$ is a complete intersection.
	    From \ref{traS} it follows that 
	    $I_{d+1}^\KK(X_{K_n}^\symmetric)$  has height $h_n$.
	    But 
	    \begin{eqnarray} 
	      \label{eq:test2}
	      I_{d+1}^\KK(X_{K_n}^\symmetric) \subset 
		      (x_{11},x_{22},\ldots, x_{h_n-1,h_n-1})  
	    \end{eqnarray} 
	    which is a contradiction. As above \eqref{eq:test2} boils down to an 
	    obvious statement about the rank of a matrix with a zero submatrix of 
	    a certain size. 
	  \end{itemize}
	\end{proof}

	Using this result we can now analyze the asymptotic behavior of
	both $\asym_\KK(\ci,K_n)$ and $\asym_\KK(\prim,K_n)$.

	\begin{Corollary}
	  \label{cor:kn}
	  Let $\KK$ be a field of characteristic $0$. Then
		\begin{eqnarray}
		\label{eq:lim}		 
	 \lim_{n \rightarrow \infty} \frac{\asym_\KK(\ci,K_n)}{n} 
	     =  \lim_{n \rightarrow \infty} \frac{\asym_\KK(\prim,K_n)}{n} 
	     = 2.
		\end{eqnarray}
	\end{Corollary}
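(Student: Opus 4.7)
The plan is to sandwich both $\asym_\KK(\ci,K_n)/n$ and $\asym_\KK(\prim,K_n)/n$ between sequences tending to $2$: the upper bound comes from \ref{thm:pmd} together with the estimate $\pmd(K_n)\leq 2n-3$ of \ref{pmd}(1), and the lower bound comes from the non-primality and non-c.i.\ obstructions of \ref{knprci}. Since $\asym_\KK(\prim,K_n)\leq \asym_\KK(\ci,K_n)+1$ it would in fact suffice to establish the limit for just one of the two invariants, but both cases proceed in parallel and it is cleaner to do them simultaneously.

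For the upper bound I would invoke \ref{pmd}(1) to get $\pmd(K_n)\leq 2n-3$, and then the chain of inequalities displayed just before \ref{obstructions1} (which summarises \ref{thm:pmd}) to conclude
$$\asym_\KK(\ci,K_n)\leq 2n-3, \qquad \asym_\KK(\prim,K_n)\leq 2n-2.$$
Dividing by $n$ and taking $\limsup$ gives $\limsup_n \asym_\KK(\pP,K_n)/n \leq 2$ for $\pP\in\{\ci,\prim\}$.

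For the lower bound I would first note that, by the very definition of $\asym_\KK$, any instance $d_0$ for which $L_{K_n}^\KK(d_0)$ is not prime (respectively not a complete intersection) forces $\asym_\KK(\prim,K_n)\geq d_0+1$ (respectively $\asym_\KK(\ci,K_n)\geq d_0+1$). Plugging into this observation the two values of $d_0$ supplied by \ref{knprci} yields
$$\asym_\KK(\prim,K_n)\geq n+\binom{w_n-2}{2}, \qquad \asym_\KK(\ci,K_n)\geq n+\binom{w_{n+1}-2}{2}-1.$$
It then remains only to analyse $w_n$ asymptotically. By the defining property $\binom{w_n}{2}\leq n<\binom{w_n+1}{2}$ one has $w_n\sim \sqrt{2n}$, so $\binom{w_n-2}{2}/n = (w_n-2)(w_n-3)/(2n)\to 1$, and the same estimate holds with $w_{n+1}$ in place of $w_n$. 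Dividing the displayed lower bounds by $n$ therefore gives $\liminf_n \asym_\KK(\pP,K_n)/n\geq 2$, and combining with the upper bound proves \eqref{eq:lim}.

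The only step with any content beyond assembling previously proven facts is the elementary asymptotic manipulation of $\binom{w_n-2}{2}$, which is routine bookkeeping with binomial coefficients; I do not anticipate any genuine obstacle, since all the substantive algebra has already been packaged into \ref{thm:pmd}, \ref{pmd} and \ref{knprci}.
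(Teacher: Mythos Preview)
Your proposal is correct and follows essentially the same approach as the paper: the upper bound via $\pmd(K_n)\leq 2n-3$ (equivalently \ref{cor:bounds}) and the lower bound via \ref{knprci}, combined with the elementary asymptotic $\binom{w_n-2}{2}/n\to 1$. The only cosmetic difference is that the paper uses the single chain $n+\binom{w_{n+1}-2}{2}-1\leq \asym_\KK(\ci,K_n)\leq \asym_\KK(\prim,K_n)\leq 2n-2$ rather than treating the two invariants separately, but you already note this shortcut yourself.
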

	\begin{proof}
	  By \ref{cor:bounds} we have $\asym_\KK(\prim, K_n)\leq 2n-2$. By \ref{knprci}  we have 
	  \begin{eqnarray}
	    \label{eq:primbound}
		  n+{w_{n+1}-2 \choose 2} -1 & \leq & \asym_\KK(\ci,K_n) \leq 
		       \asym_{\KK}(\prim,K_n) 
	  \end{eqnarray}
	  Hence the equalities in \eqref{eq:lim} follow from the fact that 
	  $$\lim_{n \rightarrow \infty} \frac{{w_{n+1} -2\choose 2}}{n} =1.$$
	\end{proof}

	Using \ref{knprci} and   \ref{thm:prime_and_ci} we  obtain further obstructions.

	\begin{Corollary}
	  \label{cor:obstruction}
		Let $G$ be a graph on $n$ vertices and $\KK$ a field of characteristic $0$ and denote by $\alpha = \omega(G)$ the clique number of $G$. 
		Then  $L_G^\KK(d)$ is not a complete intersection
		for  $d\leq \alpha+{w_{\alpha+1}-2\choose 2}-2$ and $L_G^\KK(d)$ is not prime for $d\leq \alpha+{w_{\alpha} -2\choose 2}-1$ where $w_{\alpha}$ is  defined as in \ref{knprci}.
	\end{Corollary}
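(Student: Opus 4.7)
The plan is to obtain this as a direct consequence of the obstructions for complete graphs established in \ref{knprci}, together with the monotonicity and subgraph-transfer properties recorded in \ref{cor:prime_and_ci}. The key observation is that, by the very definition of the clique number, $G$ contains $K_\alpha$ as a subgraph.

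First I would apply \ref{cor:prime_and_ci}(2) in contrapositive form: if $L_{K_\alpha}^\KK(d)$ is not prime (respectively, not a complete intersection) for some value $d$, then $L_G^\KK(d)$ cannot be prime (respectively, a complete intersection) either. This reduces the problem from the arbitrary graph $G$ to the complete graph $K_\alpha$, where we already have explicit obstructions in hand.

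Next I would use \ref{cor:prime_and_ci}(1) in contrapositive form, which says that the properties of being prime or a complete intersection are monotone increasing in $d$. Consequently, if $L_{K_\alpha}^\KK(d_0)$ fails to be prime (respectively, a complete intersection) at some specific value $d_0$, then it must fail at every $d\les d_0$, since otherwise iterating \ref{cor:prime_and_ci}(1) would force the property to hold at $d_0$.

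Finally, I would plug in the values given by \ref{knprci}: applying \ref{knprci}(1) with $n=\alpha$ produces $d_0=\alpha+\binom{w_\alpha-2}{2}-1$ as a value at which $L_{K_\alpha}^\KK(d_0)$ is not prime, and applying \ref{knprci}(2) with $n=\alpha$ produces $d_0=\alpha+\binom{w_{\alpha+1}-2}{2}-2$ as a value at which $L_{K_\alpha}^\KK(d_0)$ is not a complete intersection. Combined with the two reductions above, this yields exactly the stated bounds. There is essentially no technical obstacle here; the only point requiring a moment's care is ensuring that the direction of monotonicity in $d$ is correctly applied to propagate the failure of the property downward from $d_0$ to all smaller $d$.
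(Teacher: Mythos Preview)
Your proposal is correct and follows exactly the route the paper indicates: the paper merely records that the corollary is obtained ``using \ref{knprci} and \ref{thm:prime_and_ci}'', and your argument spells this out via the subgraph-transfer and $d$-monotonicity consequences collected in \ref{cor:prime_and_ci}, which is itself an immediate corollary of \ref{thm:prime_and_ci}.
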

	 
	 To get an actual feeling of the obstruction, we just explicit one example: 
	 
	 \begin{Example} For $n=15$  one has $w_n=6$ and $L_{K_n}^\KK(d)$ is not prime for $d=15+{ 6-2 \choose 2}-1=20$. Therefore $L_{G}^\KK(20)$ is not prime if $G$ contains $K_{15}$, i.e. 
	 $\omega(G)\geq 15$. 
	 \end{Example}

	For the case of complete bipartite graphs $K_{m,n}$ results of 
	De Concini and Strickland \cite{DS} or Musili and Seshadri \cite{MS} on the varieties of complexes 
	imply  the following:  

	\begin{Theorem} 
	  \label{thm:DS}
	  Let $G =K_{m,n}$. Then: 
	  \begin{itemize}
	  \item[(1)] $L_G^\KK(d)$ is radical for every $d$. 
	  \item[(2)] $L_G^\KK(d)$ is a complete intersection if and only if  
		  $d\geq m+n-1$. 
	  \item[(3)] $L_G^\KK(d)$ is  prime if and only if $d\geq m+n$. 
	 \item[(4)] $\pmd(G)=m+n-1$. 
	  \end{itemize} 
	\end{Theorem}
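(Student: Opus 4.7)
The plan is to identify $S/L_{K_{m,n}}^\KK(d)$ with the coordinate ring of a two-step variety of complexes and then combine a classical reducedness theorem with the machinery built up in \ref{sec:stabilization}--\ref{sec:pmd}. By \ref{rem:LSSasDet}, $L_{K_{m,n}}^\KK(d)$ is the ideal generated by all entries of the product matrix $YZ$, where $Y$ is a generic $m\times d$ matrix and $Z$ a generic $d\times n$ matrix in disjoint sets of variables. Thus $S/L_{K_{m,n}}^\KK(d)$ is exactly the coordinate ring of the two-step complex variety $\{(Y,Z):YZ=0\}$. The De Concini--Strickland theorem (and the parallel work of Musili--Seshadri via standard monomial theory) asserts that this scheme is reduced (in fact Cohen--Macaulay, with components indexed by admissible rank patterns), and this gives (1).

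For (4), \ref{pmd}(2) already gives $\pmd(K_{m,n})\leq m+n-1$. For the reverse inequality, suppose for contradiction that $\pmd(K_{m,n})\leq m+n-2$. Setting $d=m+n-2\ge \pmd(K_{m,n})$ and applying \ref{thm:pmd} makes $L_{K_{m,n}}^\KK(d)$ a complete intersection; but this directly contradicts \ref{obstructions1}(2), which prohibits $K_{m,n}$ from containing $K_{a,b}$ with $a+b=d+2=m+n$. Hence $\pmd(K_{m,n})=m+n-1$, proving (4).

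Given (4), parts (2) and (3) are essentially bookkeeping. The upper bounds follow immediately from \ref{thm:pmd}: for $d\ge m+n-1=\pmd(K_{m,n})$ the ideal is a radical complete intersection, and for $d\ge m+n$ it is prime. For the matching lower bounds, when $d\leq m+n-1$ the containment of $K_{m,n}$ in itself with $m+n\geq d+1$ activates \ref{obsKab} and rules out primality; and when $d\leq m+n-2$, choosing any $a\leq m$, $b\leq n$ with $a+b=d+2$ (which is possible exactly because $d+2\leq m+n$) exhibits a subgraph $K_{a,b}\subseteq K_{m,n}$ that \ref{obstructions1}(2) forbids for a complete intersection.

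The genuine content of the theorem sits in (1): the positive matching decomposition framework only yields radicality above the threshold $\pmd(K_{m,n})=m+n-1$, so pushing radicality down to the full range of $d$ — in particular across the many rank strata into which $\{YZ=0\}$ decomposes when $d$ is small — requires an external structural input, and this is where invoking \cite{DS}, \cite{MS} is essential rather than cosmetic. Once (1) is in hand, parts (2)--(4) are a combinatorial wrap-up of the obstruction and stabilization results established earlier in the paper.
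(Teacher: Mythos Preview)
Your proof is correct and follows essentially the same route as the paper: part (1) is obtained by identifying $S/L_{K_{m,n}}^\KK(d)$ via \ref{rem:LSSasDet} with the coordinate ring of the two-step variety of complexes and invoking \cite{DS}, \cite{MS}; parts (2)--(4) are handled by combining the upper bound from \ref{pmd}(2) (equivalently \ref{cor:bounds}) with the obstructions in \ref{obsKab} and \ref{obstructions1}. The only cosmetic difference is that you establish (4) first and then read off (2) and (3), whereas the paper cites \cite{DS}, \cite{MS} for (1)--(3) simultaneously and notes the alternative route for (2), (3) via \ref{obstructions1} and \ref{cor:bounds}.
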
 

	\begin{proof} 
	  Taking into account \ref{rem:LSSasDet}, the assertions (1), (2), and (3) 
	  follow form general results on the variety of complexes proved from 
	  \cite{DS} and, with different techniques, from \cite{MS}. 
	  It has been observed by Tchernev \cite{T} that the assertions in 
	  \cite{DS} that refer to the so-called Hodge algebra structure of the 
	  variety of complexes in \cite{DS} are not correct. However, those 
	  assertions can be replaced with statements concerning Gr\"obner bases  
	  as it is done, for example, in a similar case in \cite{T}.
	  Hence, (1),(2) and (3) can still be deduced from the arguments in \cite{DS}.  
	 
	  Alternative proofs of (2) and (3) are obtained combining \ref{obstructions1} and  \ref{cor:bounds}.  Finally  (4) is a consequence of  \ref{pmd} and \ref{obstructions1}.
	\end{proof}

	\section{Questions and open problems}
	\label{sec:questions}

	We have seen that for the properties
	 ``complete intersection"  and ``prime" of $L_G^\KK(d)$ there is persistence along the parameter $d$  but \ref{ex:nrad} shows persistence does not occur in general  for the property of being radical.

	\begin{Question}
	  \label{q:radical}	 
	  What patterns can occur in the set $\{ d  : L_G^\KK(d) \mbox{ is radical}\}$
	  for a graph $G$?  
	\end{Question}

	Since the complete intersection property and  prime property of $L_G^\KK(d)$ for a given $d$ are inherited by subgraphs, the properties can be characterized by means of forbidden subgraphs. We have explicitly identified the forbidden subgraphs  in  \ref{thm:prim3} for $d=2$ and complete intersection and for $d=3$ and prime. 
	For $d=3$ and complete intersection we do not even have a conjecture for the set of forbidden graphs.  
	For $d = 4$  results from Lov\'asz's book \cite[Ch 9.4]{L} suggest the following: 

	\begin{Question} 
	  Is it true that $L_G^\KK(4)$ is prime if and only if $G$ does not 
	  contain  $K_{a,b}$ for $a+b = 5$ and $B_4$?
	\end{Question} 

	Via the fact that primeness of $L_G^\KK(d)$ implies primeness of
	$I_{d+1}^\KK(X_G)$ a result by Giusti and Merle \cite[Thm. 1.6.1]{GM} guides the intuition 
	behind the following question.

	\begin{Question}
	  Let $G$ be a subgraph of $K_{m,n}$ graph and assume $m \leq n$. 
	  Is it true that $L_G^\KK(m-1)$ is prime if and only if $G$ does not
	  contain  $K_{a,b}$ for $a+b \geq m$? 
	\end{Question}

	By \ref{prop:traG} and \ref{traS} we know that if $L_G^\KK(d)$ is radical or 
	prime then so are $I_{d+1}^\KK(X_G^\generic)$ and $I_{d+1}^\KK(X_G^\symmetric)$ 
	respectively. But our general bounds for $\asym_\KK(\rad,G)$ and
	$\asym_\KK(\prim,G)$ from \ref{cor:bounds} are not good enough to 
	make use of this implication.
	Indeed, \ref{cor:kn} shows that for the properties complete intersection and
	prime and $n$ large enough there are graphs $G$ for which \ref{traS} does not
	prove primality of an interesting ideal. 
	On the other hand the use of \ref{thm:forest} in \ref{pro} shows that 
	one can take advantage of this connection in some cases.
	It would be interesting to exhibit classes different from forests where this
	is possible.

	\begin{Question}
	  \label{qu:classesGraph}
	  Are there more interesting classes of graphs $G = ([n],E)$ for
	  which $\asym_\KK(\ci,G) \leq n-1$ or $\asym_\KK(\prim,G) \leq n$ ?
	\end{Question}

	Despite the fact that \ref{knprci} destroys the hope for using 
	\ref{pro} for general graphs, it would be interesting replace the
	asymptotic result by an actual value.
	By \ref{cor:kn} for $n$ large we have $\asym_\KK(\prim,K_n) = 2n-c_n$
	for some numbers $c_n \in o(n)$ which using the notation of
	\ref{knprci} satisfy $n-{w_n -2\choose 2} +1 \geq c_n \geq 2$.
	But we have no conjecture for an 
	actual formula for $c_n$. 

	\begin{Question}
	  What is the exact value of $\asym_\KK(\prim,K_n)$?
	\end{Question}

	For radicality we have a concrete conjecture in the case $G = K_n$. 

	\begin{Conjecture} 
	 \label{KN} 
	  $$\asym_\KK(\rad,K_n)=1 \mbox{ ~ ~ ~ (at least if } \chara \KK = 0 \mbox{\,)}.$$ 
	  In other words, given a matrix of variables $X$ of size $n\times d$ we conjecture
	  the ideal of the off-diagonal entries of $XX^T$  is radical for all $n,d$. 
	 \end{Conjecture}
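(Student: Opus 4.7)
The plan is to attack the conjecture by combining an identification of the irreducible components of $V := V(L_{K_n}^\KK(d))$ with a Cohen--Macaulay argument, and then to conclude reducedness via Serre's criterion. Set $Y = (y_{i\ell})$ and $M = YY^T$; then $L_{K_n}^\KK(d)$ is exactly the ideal of off-diagonal entries of $M$, and $V$ parametrizes $n$-tuples of pairwise orthogonal row vectors $v_1,\dots,v_n$ in $\KK^d$. The cases $d=1$ (square-free monomial ideal) and $d=2$ (covered by \ref{HMMW}) are already known, as is the range $d \geq \pmd(K_n) = 2n-3$ by \ref{thm:pmd}, so the open range is essentially $3 \leq d \leq 2n-4$.

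First I would enumerate the irreducible components of $V$ by stratifying according to the zero-support $\{i : v_i = 0\}$ and the isotropy pattern of the span $\langle v_1,\dots,v_n\rangle \subset \KK^d$. For $d \geq n$ the full-rank stratum is expected to be dense and irreducible (matching the prime cases already known); for $d < n$ the expected components are ``vanishing'' components indexed by subsets $S \subset [n]$ with $|S| = n-d$, together with ``isotropic'' components in which all $v_i$ lie in a maximal totally isotropic subspace of $\KK^d$. I would verify that each candidate is irreducible of the expected dimension using the classical descriptions of orthogonal frames and isotropic Grassmannians, and that their union equals $V$ set-theoretically.

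Second, I would establish Cohen--Macaulayness of $R/L_{K_n}^\KK(d)$. The most direct route is to exhibit a square-free initial ideal of $L_{K_n}^\KK(d)$ under a suitable weight $\ww$: for $d \geq \pmd(K_n)$ such a degeneration is provided by \ref{thm:pmd} with pairwise coprime initial monomials, and the task in the small-$d$ regime is to permit controlled overlaps between the initial monomials of the generators $f_{\{i,j\}}^{(d)}$ so that the resulting initial ideal is the Stanley--Reisner ideal of a shellable simplicial complex. An alternative is a component-by-component argument exploiting the $O(d,\KK)$-symmetry: each component from Step~1 is a rational variety (an orthogonal Stiefel variety or an isotropic Grassmannian bundle) with a known Cohen--Macaulay coordinate ring, which one would then glue along their scheme-theoretic intersections.

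Finally, generic reducedness at a smooth point of each component from Step~1 is a direct Jacobian calculation, and Serre's $(S_1)$ criterion together with Cohen--Macaulayness then forces $R/L_{K_n}^\KK(d)$ to be reduced, establishing the conjecture. The principal obstacle is Step~2: producing a uniform Cohen--Macaulay degeneration that covers the entire range $3 \leq d \leq 2n-4$. For $d < \pmd(K_n)$ the initial forms of the generators must necessarily collide under any weight, and no candidate term order yielding a shellable square-free initial ideal is currently known; the gluing alternative is also delicate because the intersections of the vanishing and isotropic components carry subtle scheme structure that must be shown to match the intersection of the corresponding primes.
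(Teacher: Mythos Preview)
The statement you are attempting to prove is \emph{Conjecture}~\ref{KN}: the paper explicitly records it as an open problem and offers no proof. So there is no ``paper's own proof'' to compare against; your proposal must be judged on its own.

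As you yourself acknowledge, what you have written is a research outline rather than a proof. The logical skeleton (enumerate components, prove Cohen--Macaulay, verify $(R_0)$, conclude via Serre) is sound in principle, but each of the three steps is genuinely incomplete. Step~2 is the heart of the matter and you correctly flag it as the principal obstacle: you propose two routes (a square-free initial degeneration, or a component-wise gluing) and then explain why neither is currently available. That is an honest assessment, but it means the proposal does not advance beyond the status of the conjecture.

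Two further cautions. First, your component list in Step~1 is speculative: over an algebraically closed field the locus of $n$ pairwise orthogonal vectors in $\KK^d$ admits many mixed strata (some $v_i$ zero, the remaining ones spanning a subspace with prescribed radical of the restricted form), and it is not clear that only the two extreme families you name survive as components, nor that they are equidimensional. Second, the claim that generic reducedness is ``a direct Jacobian calculation'' presupposes Step~1; without a proven list of components you do not know where to compute the Jacobian. The paper's \ref{ex:nrad}(2) is a warning here: it exhibits an LSS-ideal that is a complete intersection (hence Cohen--Macaulay) yet \emph{not} radical, so $(R_0)$ can genuinely fail for LSS-ideals and is not a formality even after Cohen--Macaulayness is secured.

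In short: the strategy is reasonable as a research plan, but none of its steps is established, and the paper records the statement as open precisely because no such argument is known.
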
 

	It would also be interesting to study the ideal generated by all the entries 
	of $XX^T$. We note that the symplectic version of this problem has been 
	investigated by De Concini in \cite{D}. 

	Next we turn to open problems about hypergraph LSS-ideals.
	We know from \ref{thm:pmd} that for a hypergraph $H = (V,E)$ for which 
	$E$ is a clutter the ideal $L_H^\KK(d)$ is
	a radical complete intersection for $d \geq \pmd(G)$. 
	But we prove in \ref{thm:pmd} that $L_H^\KK(d)$ is prime for 
	$d \geq \pmd(H)+1$ only in the case that $H$ is a graph.

	\begin{Question}
	  \label{qu:tensor}
	  Is it true that for a hypergraph $H = (V,E)$, where $E$ is 
	  a clutter, we have $L_H^\KK(d)$ is prime for 
	  $d \geq \pmd(H)+1$?
	\end{Question}

	Similarly, the persistence results from \ref{thm:prime_and_ci}
	ask for generalizations.

	\begin{Question}
	  \label{qu:hyper_prime_and_ci}
	  Let $H = (V,E)$ be a hypergraph, where $E$ is 
	  a clutter. Is it true that
	  if $L_H^\KK(d)$ is a complete intersection (resp.~prime)
	  then so is $L_H^\KK(d+1)$? 
	\end{Question}

	For a number $r \geq 1$ we call a hypergraph $H = (V,E)$ an $r$-uniform graph 
	if every element of $E$ has cardinality $r$. In particular, $E$ is a
	clutter.  We say that an $r$-uniform graph $H = (V,E)$ 
	is $r$-partite if there is a partition $V = V_1 \cup \cdots \cup V_r$ such that
	$\# (A \cap V_i) = 1$ for all $i \in [r]$ and for all $A\in E$.   
	Now we connect the study of ideal $L_H^\KK(d)$ for $r$-uniform ($r$-partite)
	graphs with the study of coordinate sections of the variety of tensors with
	a given rank.  We consider two mappings:

	\begin{itemize}
	  \item[($\phi$)] 
	       Let $\ee_1,\ldots, \ee_n$ be the standard basis vectors of $\KK^n$. 
	       For vectors $v_i = (v_{i,j})_{j \in[d]}
	      \in \KK^d$, $i \in [r]$, consider the map $\phi$ that sends
	       $(v_1,\ldots, v_r) \in (\KK^d)^n$ to the tensor 
	       $$\sum_{j=1}^d \sum_{\sigma \in S_r} 
	       v_{\sigma(1),j} \cdots v_{\sigma(r),j} \, \ee_{\sigma(1)} \otimes \cdots 
	       \otimes \ee_{\sigma(r)} \in \underbrace{\KK^n \otimes \cdots \otimes \KK^n}_r.$$ 
	       We take the sums over the different  
	       tensors arising from 
	       $\ee_{i_1} \otimes \cdots \otimes \ee_{i_r}$, for numbers 
	       $1 \leq i_1 \leq \cdots \leq i_r \leq n$, 
	       by permuting the positions as standard basis 
	       of the space of symmetric tensors.
	    
	  \item[($\psi$)] Let $n = n_1 + \cdots + n_r$ for natural numbers
	       $n_1,\ldots, n_r \geq 1$. Let $\ee_{i}^{(j)} \in \KK^{n_j}$ be
	       the $i$-th standard basis vector of $\KK^{n_j}$, $i \in [n_j]$ and
	       $j \in [r]$. For vectors $v_i^{(j)} = (v_{i,j})_{j \in[d]}
	       \in \KK^{d}$ for $i \in [n_j]$ and $j \in [r]$ consider the map
	       $\psi$ that sends $(v_i^{(j)})_{(i,j) \in [n_j] \times [r]}$ to
	       $$\displaystyle{\sum_{(i_1,\ldots ,i_r) \in [n_1] \times \cdots \times [n_r]}} 
	       v_{i_1}^{(1)} \cdots v_{i_r}^{(r)} \, e_{i_1}^{(1)} \otimes \cdots \otimes \ee_{i_r}^{(r)} \in 
	       \KK^{n_1} \otimes \cdots \otimes \KK^{n_r}.$$
	       We take the tensors $\ee_{i_1}^{(1)} \otimes \cdots \otimes \ee_{i_r}^{(r)}$
	       for numbers $i_j \in [n_j]$, $j \in [r]$ as the standard basis of 
	       $\KK^{n_1} \otimes \cdots \otimes \KK^{n_r}$.
	\end{itemize}

	Recall that a (symmetric) tensor has (symmetric) rank $\leq d$ it can be written 
	as a sum of $\leq d$ decomposable (symmetric) tensors. 
	For more details on tensor rank and the geometry of bounded rank tensors we
	refer the reader to \cite{La}.
	Let $H = (V,E)$ be a hypergraph. We write $\vV(L_H^\KK(d))$ for the vanishing 
	locus of $L_H^\KK(d)$. The definition of the maps $\phi$ and $\psi$ 
	immediately implies the following proposition.
	 
	\begin{Proposition}
	  \label{prop:tensor}
	  Let $H = ([n],E)$ be an $r$-uniform hypergraph and $\KK$ an algebraically 
	  closed field. 
	  \begin{itemize}
	     \item[(1)] Then the restriction of the map 
	       $\phi$ to $\vV(L_H^\KK(d))$ is a parametrization of the 
	       variety  of symmetric tensors in $\underbrace{\KK^{n} \otimes 
	       \cdots \otimes \KK^n}_{r}$ of rank $\leq d$ which when 
	       expanded in the standard basis has zero coefficient for the
	       basis elements indexed by $1 \leq i_1 < \cdots < i_r \leq n$ 
	       and $\{i_1,\ldots, i_r \} \in E$.  
	       In particular, the Zariski-closure of the image of the restriction
	       is irreducible if $L_H^\KK(d)$ is prime.
	     \item[(2)] If $H$ is $r$-partite with respect to the partition 
	       $V = V_1 \cup \cdots \cup V_r$ where $|V_i| = n_i$, $i \in[r]$. 
	       Then the restriction of the map 
	       $\psi$ to $\vV(L_H^\KK(d))$ is a parametrization of the 
	       variety  of tensors in $\KK^{n_1} \otimes 
	       \cdots \otimes \KK^{n_r}$ of rank $\leq d$ which when expanded in 
	       the standard basis have zero coefficient for the basis elements
	       indexed by $i_1 , \ldots, i_r$ where $\{i_1,\ldots, i_r \} \in E$. 
	       In particular, the Zariski-closure of the image of the 
	       restriction is irreducible if $L_H^\KK(d)$ is prime.
	  \end{itemize}
	\end{Proposition}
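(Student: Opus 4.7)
The plan is to recognize each map as the classical parametrization of a variety of bounded-rank tensors, and to identify the vanishing of the coefficient of each basis tensor indexed by an edge with the vanishing of a generator $f_e^{(d)}$ of $L_H^\KK(d)$. For part (1), I would set $u_\ell := \sum_{i=1}^n y_{i,\ell}\ee_i \in \KK^n$ for $\ell \in [d]$ and reinterpret $\phi$ as the map $(u_1,\dots,u_d)\mapsto \sum_{\ell=1}^d u_\ell^{\otimes r}$ (the $S_r$-symmetrization in the stated formula being just the passage from $(\KK^n)^{\otimes r}$ to $\Sym^r(\KK^n)$). Since $\KK$ is algebraically closed, this power-sum map is by definition the parametrization of the symmetric rank-$\leq d$ locus in $\Sym^r(\KK^n)$.

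Next I would expand $\sum_{\ell=1}^d u_\ell^{\otimes r}$ in the standard basis of $(\KK^n)^{\otimes r}$. The coefficient of $\ee_{i_1}\otimes\cdots\otimes\ee_{i_r}$ is
\[
\sum_{\ell=1}^d y_{i_1,\ell}\cdots y_{i_r,\ell},
\]
which is independent of the ordering of $(i_1,\dots,i_r)$ and, for a tuple with pairwise distinct entries forming an edge $e=\{i_1,\dots,i_r\}\in E$, equals $f_e^{(d)}$. After symmetrization it follows that the coefficient of the symmetric basis tensor labelled by $e$ is a non-zero scalar multiple of $f_e^{(d)}$, so the rank-$\leq d$ symmetric tensors with zero coefficient on every such basis element are exactly the images under $\phi$ of points in $\vV(L_H^\KK(d))$, establishing the parametrization claim.

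For irreducibility: over the algebraically closed field $\KK$, primality of $L_H^\KK(d)$ is equivalent to irreducibility of $\vV(L_H^\KK(d))$, and the Zariski-closure of the image of an irreducible variety under a morphism is irreducible; hence $\overline{\phi(\vV(L_H^\KK(d)))}$ is irreducible whenever $L_H^\KK(d)$ is prime. Part (2) runs through the same three steps applied to $\psi$, which is the standard parametrization of the rank-$\leq d$ locus in $\KK^{n_1}\otimes\cdots\otimes\KK^{n_r}$ as the image of $(u_\ell^{(j)})\mapsto \sum_{\ell=1}^d u_\ell^{(1)}\otimes\cdots\otimes u_\ell^{(r)}$, and the coefficient of $\ee_{i_1}^{(1)}\otimes\cdots\otimes\ee_{i_r}^{(r)}$ in this expansion is $f_e^{(d)}$ for the automatically $r$-partite edge $e=\{i_1,\dots,i_r\}$.

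The only real difficulty is clerical: carefully matching the explicit formulas for $\phi$ and $\psi$ in the statement (especially the $S_r$-sum defining $\phi$) with the power-sum parametrization, and tracking the symmetrization normalization between the tensor basis of $(\KK^n)^{\otimes r}$ and the standard basis of $\Sym^r(\KK^n)$. Once that identification is pinned down, both parts reduce to the one-line coefficient computation above together with the general fact that morphisms of varieties preserve irreducibility of Zariski-closures.
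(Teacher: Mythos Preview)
Your proposal is correct and matches the paper's approach: the paper gives no explicit proof, stating only that ``the definition of the maps $\phi$ and $\psi$ immediately implies the following proposition,'' and your argument is precisely the unpacking of those definitions. One small inaccuracy worth fixing: you write that primality of $L_H^\KK(d)$ is \emph{equivalent} to irreducibility of $\vV(L_H^\KK(d))$, but over an algebraically closed field this equivalence requires the ideal to be radical; since you only use the implication prime $\Rightarrow$ irreducible, the argument is unaffected.
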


	\ref{prop:tensor} gives further motivation to \ref{qu:tensor}. Indeed,
	it suggests to strengthen \ref{qu:classesGraph}.

	\begin{Question}
	    \label{qu:classesHypergraph}
	    Let $\KK$ be an algebraically closed field.
	    Can one describe classes of $r$-uniform hypergraphs $H$ for which
		$L_H^\KK(d)$ is
	    prime for some $d$ bounded from above
	    by the
		maximal symmetric rank of a symmetric sensor in
	       $\underbrace{\KK^{n} \otimes 
	       \cdots \otimes \KK^n}_{r}$.
	\end{Question}

	An analogous question can be asked for $r$-partite $r$-uniform hypergraphs and tensors of
	bounded rank.

	\end{document}